\providecommand{\U}[1]{\protect\rule{.1in}{.1in}}
\numberwithin{equation}{section}
\newtheorem{theorem}{Theorem}[section]
\newtheorem{corollary}{Corollary}[section]
\newtheorem{lemma}{Lemma}[section]
\newtheorem{proposition}{Proposition}[section]
\newtheorem{remark}{Remark}[section]
\newtheorem{definition}{Definition}[section]
\numberwithin{equation}{section}
\newcommand{\bbr}{\mathbb{R}}
\newcommand{\bbn}{\mathbb{N}}
\newcommand{\ve}{\varepsilon}
\newcommand{\Uj}{\mathcal{U}_{\mu_j,\xi_j}}
\newcommand{\Un}{\mathcal{U}_{\mu_n,\xi_n}}
\newcommand{\bd}{\begin{definition}}
\newcommand{\ed}{\end{definition}}
\newcommand{\br}{\begin{remark}}
\newcommand{\er}{\end{remark}}
\newcommand{\be}{\begin{equation}}
\newcommand{\ee}{\end{equation}}
\newcommand{\bc}{\begin{corollary}}
\newcommand{\ec}{\end{corollary}}
\begin{document}

\title[Blow-up solutions]{Construction of bubbling solutions of the Brezis-Nirenberg problem in general bounded domains (I): the dimensions 4 and 5}

\author[F. Li]{Fengliu Li}
\address{\noindent  School of Mathematics, China
University of Mining and Technology, Xuzhou, 221116, P.R. China }
\email{lifengliu@cumt.edu.cn}

\author[G. Vaira]{Giusi Vaira}
\address{\noindent  Dipartimento di Matematica, Universit\'a degli Studi di Bari Aldo Moro,Italy }
\email{giusi.vaira@uniba.it}

\author[J. Wei]{Juncheng Wei}
\address{\noindent Department of Mathematics, Chinese University of Hong Kong,
Shatin, NT, Hong Kong}
\email{wei@math.cuhk.edu.hk}

\author[Y. Wu]{Yuanze Wu}
\address{\noindent  School of Mathematics, Yunnan Normal University, Kunmin, 650500, P.R. China }
\email{wuyz850306@cumt.edu.cn}

\thanks{G. Vaira is partially supported by
the MUR-PRIN-P2022YFAJH ``Linear and Nonlinear PDE's: New directions and Applications", by the MUR-PRIN-2022AKNSE ``Variational and Analytic aspects of Geometric PDEs" and by the INdAM-GNAMPA project ``Fenomeni non lineari: problemi locali e non locali e loro applicazioni",  CUP E5324001950001. J. Wei is partially supported by GRF of Hong Kong entitled "New frontiers in singularity formations in nonlinear partial differential equations".  F. Liu and Y. Wu are supported by NSFC (No. 12171470).}
\begin{abstract}
In this paper, we consider the Brezis-Nirenberg problem
\begin{eqnarray*}
\left\{
\aligned
&-\Delta u=\lambda u+|u|^{\frac{4}{N-2}}u,\quad&\mbox{in}\,\, \Omega,\\
&u=0,\quad&\mbox{on}\,\, \partial\Omega,
\endaligned
\right.
\end{eqnarray*}
where $\lambda\in\mathbb{R}$, $\Omega\subset\bbr^N$ is a bounded domain with smooth boundary $\partial\Omega$ and $N\geq3$.  We prove that every eigenvalue of the Laplacian operator $-\Delta$ with the Dirichlet boundary is a concentration value of the Brezis-Nirenberg problem in dimensions $N=4$ and $N=5$ by constructing bubbling solutions with precisely asymptotic profiles via the Ljapunov-Schmidt reduction arguments.  Our results suggest that the bubbling phenomenon of the Brezis-Nirenberg problem in dimensions $N=4$ and $N=5$ as the parameter $\lambda$ is close to the eigenvalues are governed by crucial functions related to the eigenfunctions, which has not been observed yet in the literature to our best knowledge.  Moreover, as the parameter $\lambda$ is  close to the eigenvalues, there are arbitrary number of multi-bump bubbing solutions in dimension $N=4$ while, there are only finitely many number of multi-bump bubbing solutions in dimension $N=5$, which are also new findings to our best knowledge.

\vspace{3mm} \noindent{\bf Keywords:} Brezis-Nirenberg problem; Multi-bump solution; Bubbling phenomenon; Sign-changing solution; Reduction argument.

\vspace{3mm}\noindent {\bf AMS} Subject Classification 2020: 35B33; 35B40; 35B44; 35J15.%
\end{abstract}

\date{}

\maketitle

\section{Introduction}
\subsection{Background}
The Brezis-Nirenberg problem is one of the famous problems in the community of nonlinear analysis.  It reads as
\begin{eqnarray}\label{eq0001}
\left\{
\aligned
&-\Delta u=\lambda u+|u|^{\frac{4}{N-2}}u,\quad&\mbox{in}\,\, \Omega,\\
&u=0,\quad&\mbox{on}\,\, \partial\Omega,
\endaligned
\right.
\end{eqnarray}
where $\lambda\in\mathbb{R}$, $\Omega\subset\bbr^N$ is a bounded domain with smooth boundary $\partial\Omega$ and $N\geq3$.  This famous model was introduced by Brezis and Nirenberg in their celebrated paper \cite{BN}, as an analogous example of the Yamabe problem, to understand the lack of compactness of the Sobolev embedding from $H^1_0(\Omega)$ to $L^{2^*}(\Omega)$, where $L^p(\Omega)$ is the classical Lebesgue space, $2^*=\frac{2N}{N-2}$ and
\begin{eqnarray*}
H_0^1(\Omega)=\left\{u\in L^2(\Omega)\mid |\nabla u|\in L^2(\Omega)\right\}
\end{eqnarray*}
is the classical Sobolev space.
Brezis and Nirenberg proved in \cite{BN} that \eqref{eq0001} has positive solutions for every $\lambda\in (0, \lambda_1)$ in the case $N\geq4$ and there exists $\lambda_*:=\lambda_*(\Omega)>0$ such that \eqref{eq0001} has positive solutions  if $\lambda\in (\lambda_*, \lambda_1)$ in the case $N=3$ by investigating the attainment of the following variational problem
\begin{eqnarray}\label{gs}
S_\lambda=\inf_{u\in H^1_0(\Omega)\backslash\{0\}}\frac{\|u\|^2-\lambda\|u\|_2^2}{\|u\|^{2^*}_{L^{2^*}}}
\end{eqnarray}
where $\|u\|$ and $\|u\|_{L^p}$ are the standard norms in $H_0^1(\Omega)$ and $L^p(\Omega)$, respectively, and $\lambda_1:=\lambda_1(\Omega)$ is the first eigenvalue of the Laplacian operator $-\Delta$ with Dirichlet boundary condition.  If we denote
\begin{eqnarray*}
\lambda_0=\left\{\aligned
&\lambda_*,\quad&\mbox{if }\,\,  N=3,\\
&0,\quad&\mbox{if }\,\,  N\geq4,
\endaligned
\right.
\end{eqnarray*}
then $\lambda_0$ is characterized by
\begin{eqnarray*}
\lambda_0=\inf\left\{\lambda\in\bbr\mid S_{\lambda}<S\right\}
\end{eqnarray*}
where $S$ is the best constant of the Sobolev embedding.  Moreover, as a consequence of the classical Pohozaev's identity, positive solutions do not exist if $\lambda\leq 0$ and $\Omega$ is star-shaped.  The existence of positive solutions is completely understood in the radial setting ($\Omega=B$ is the unit ball), since Brezis and Nirenberg further proved in \cite{BN} that $\lambda_*=\frac{\lambda_1(B)}{4}$ and a positive solution of \eqref{eq0001} exists if and only if $\lambda\in\left(\frac{\lambda_1(B)}{4}, \lambda_1(B)\right)$.  Finding sign-changing solutions or establishing multiplicity of solutions are much more complicated, since these topics are not very close to our main concerning in this paper, we only refer the readers to \cite{AGGS2008,CC2003,CSS1986,CW2005,DS2002,DS2003,SWW2025,SZ2010} and the references therein.

\vskip0.12in

The Brezis-Nirenberg problem~\eqref{eq0001} is also one of the classical models to understand the bubbling phenomenon of nonlinear partial differential equations (PDE for short).  To our best knowledge, in answering Brezis and Peletier's open questions proposed in \cite{B1986,BP1989}, Druet in \cite{D2002} and Han in \cite{H1991} first observed the bubbling phenomenon of the positive solution of the variational problem~\eqref{gs} as $\lambda\to\lambda_0^+$ in dimension $N=3$ and dimensions $N\geq4$, respectively, which is called the one-bubble case since there is only one bubble in this study.  Alternative proofs of this one-bubble phenomenon are given by Esposito in \cite{E2004} and Rey in \cite{R} for dimension $N=3$ and dimensions $N\geq4$, respectively.  The further related studies on such one-bubble phenomenon of the positive solutions of the Brezis-Nirenberg problem~\eqref{eq0001}, for example, critical functions and energy asymptotics, can be found in \cite{FKK2020,FKK2021,FKK2024,HV2001} and the references therein.  Since the positive solution of the variational problem~\eqref{gs} will concentrate as $\lambda\to\lambda_0^+$, we prefer to call $\lambda_0$ a {\it concentration value}, as that in \cite{AGGPV}.  Besides the one-bubble phenomenon, the positive solutions of the Brezis-Nirenberg problem~\eqref{eq0001} also has the multi-bubble phenomenon as $\lambda\to\lambda_0^+$, which, to our best knowledge, is first observed by Musso and Pistoia in \cite{MP}.  The further related studies on such multi-bubble phenomenon
of of the positive solutions, for example, the stability of the Pohozaev obstruction and multi-bubble blow-up analysis, can be found in \cite{CLP2021,DL2010, KL2022-1,KL2024} and the references therein.  A significant finding in these studies is that the bubbling phenomenon of the positive solutions of the Brezis-Nirenberg problem~\eqref{eq0001} as $\lambda\to\lambda_0^+$ is governed by the Kirchhoff-Routh function, which is given by
\begin{eqnarray}\label{eqn0002}
\mathcal{K}(\pmb{\mu},\pmb{x})=\frac12\left(\sum_{j=1}^kH(x_j,x_j)\mu_j^{N-2}-\sum_{i,j=1;i\not=j}^kG(x_i,x_j)\mu_j^{\frac{N-2}{2}}\mu_i^{\frac{N-2}{2}}\right)-\sum_{j=1}^k\frac{B_N}{2}\mu_j^2
\end{eqnarray}
where
\begin{eqnarray*}
G(x, y)=\gamma_N\left(\frac{1}{|x-y|^{N-2}}-H(x, y)\right)
\end{eqnarray*}
is the Green function of the Laplace operator $-\Delta$ at the boundary $\partial\Omega$, with $\gamma_N=\frac{1}{(N-2)\omega_N}$ and $\omega_N$ the surface area of the unit sphere in $\bbr^N$,
$H(x, y)$ is the regular part of $G(x, y)$, that is, $H(x, y)$ is the unique solution of the following equation
\begin{eqnarray*}
\left\{\begin{aligned}
&-\Delta H(x, y)=0,\quad&\mbox{in}\,\, \Omega,\\
&H(x, y)=\frac{1}{|x-y|^{N-2}}, \quad&\mbox{on}\,\, \partial\Omega,
\end{aligned}\right.
\end{eqnarray*}
$B_N$ is a constant only depending on the dimension $N$, $k$ is the number of the bubbles, $\{x_j\}\subset\Omega$ are the locations of the bubbles and $\mu_j$ is the height of the $j$-th bubble.  The bubbling phenomenon of sign-changing solutions of
the Brezis-Nirenberg problem~\eqref{eq0001} as $\lambda\to\lambda_0^+$ is much richer and more complicated, we refer the readers to \cite{BEP2006-1,BEP2006-2, IP2015, MRV2024, V2015} and the references therein.

\vskip0.12in

Besides $\lambda_0$, it is well known that there are also many other concentration values of the Brezis-Nirenberg problem~\eqref{eq0001} in low dimensions $3\leq N\leq6$ in the radial setting ($\Omega=B$ is the unit ball).  Atkinson et al. first observed such phenomenon in \cite{ABP,AP} via ODE arguments.  Based on these results, Iacopetti and Pacella gave a further detailed investigation on the sign-changing solution, having two nodal regions, in \cite{IP} in the low dimensions $4\leq N\leq6$, still by ODE arguments.  In a recent paper \cite{AGGPV}, Amadori et al. gave a completed description of concentration values and bubbling phenomenon of the Brezis-Nirenberg problem~\eqref{eq0001} in the radial setting for all $N\geq3$, mainly by ODE arguments.  In particular, it is known that in the radial setting ($\Omega=B$ is a unit ball), every eigenvalue of the Laplacian operator $-\Delta$ with Dirichlet boundary condition is a concentration value of the Brezis-Nirenberg problem~\eqref{eq0001} in dimensions $N=4$ and $N=5$.  Iacopetti and Vaira partially generalized the study of the bubbling phenomenon of the Brezis-Nirenberg problem~\eqref{eq0001} away from $\lambda_0$ in dimensions $N=4$ and $N=5$ from radial setting to general bounded domains in \cite{IV}, where it has been proved that the Brezis-Nirenberg problem~\eqref{eq0001} has a one-bump bubbling solution $u_{\lambda}$ as $\lambda\to\lambda_1$ in general (symmetric) bounded domains in dimensions $N=4$ and $N=5$, where we recall that $\lambda_1:=\lambda_1(\Omega)$ is the first eigenvalue of the Laplacian operator $-\Delta$ with Dirichlet boundary condition.  Moreover, as $\lambda\to\lambda_1$,
this one-bump bubbling solution has the same asymptotic profile as that in the radial setting established in \cite{AGGPV,IP}, that is, $u_{\lambda}^+$, the positive part of $u_{\lambda}$, concentrates and blows up at a single point and has the limit profile of a “standard bubble” in $\bbr^N$(i.e. a solution of the critical problem in $\bbr^N$ , see \eqref{pblim}) while, $u_\lambda^-$, the negative part of $u_{\lambda}$, blows down uniformly and shares the shape of the positive eigenfunction associated with $\lambda_1$.  We remark that the symmetric assumption in \cite{IV} is only used to simplify the computations and can be trivially removed.
Inspired by the above facts, it is natural to ask the following questions:
\begin{enumerate}
\item[$(Q)$]\quad Does every eigenvalue of the Laplacian operator $-\Delta$ with Dirichlet boundary condition is a concentration value of the Brezis-Nirenberg problem~\eqref{eq0001} in general bounded domains in dimensions $N=4$ and $N=5$?  If so, given a concentration value of the Brezis-Nirenberg problem~\eqref{eq0001}, which is different from $\lambda_0$,
can we find some functions, which play the same role of the Kirchhoff-Routh function $\mathcal{K}(\pmb{\mu},\pmb{x})$ given by \eqref{eqn0002} for the concentration value $\lambda_0$?
\end{enumerate}
In this paper, we shall report our answer to this natural question.

\vskip0.12in

It is worth pointing out that the situation in general bounded domains in the dimension $N=6$ is much more involved.  Indeed, Pistoia and Vaira proved in \cite{PV} that the Brezis-Nirenberg problem~\eqref{eq0001} has a one-bump bubbling solution $u_{\lambda}$ as $\lambda\to\overline{\lambda}\in (0, \lambda_1)$ in generic bounded domains in dimension $N=6$.  Again, as $\lambda\to\overline{\lambda}$, this one-bump bubbling solution has the same asymptotic profile as that in the radial setting established in \cite{AGGPV,IP}, that is, $u_{\lambda}^+$, the positive part of $u_{\lambda}$, concentrates and blows up at a single point and has the limit profile of a “standard bubble” in $\bbr^N$(i.e. a solution of the critical problem in $\bbr^N$ , see \eqref{pblim}) while, $u_\lambda^-$, the negative part of $u_{\lambda}$, uniformly converges to a positive solution of the Brezis-Nirenberg problem~\eqref{eq0001}.  Pistoia and Vaira's results suggest that in constructing bubbling solutions of the Brezis-Nirenberg problem~\eqref{eq0001} in general bounded domains in dimension $N=6$, the ansatz will be quite different from that in dimensions $N=4$ and $N=5$.  Thus, if we also report our results on constructing bubbling solutions of the Brezis-Nirenberg problem~\eqref{eq0001} in general bounded domains in dimension $N=6$ in this paper, then the length of this paper will be too long according to two different construnctions.  Taking into account this, we shall report our results on constructing bubbling solutions of the Brezis-Nirenberg problem~\eqref{eq0001} in general bounded domains in the dimension $N=6$ in a forthcoming paper.  Moreover, we remark that there is no results on constructing bubbling solutions of the Brezis-Nirenberg problem~\eqref{eq0001} in general bounded domains in the dimension $N=3$.  Besides, the result in \cite{AGGPV} about the radial setting in the dimension $N=3$ implies that the ansatz in
the dimension $N=3$ will also be very different that in dimensions $N=4, 5$ and in dimension $N=6$.  We shall report our results on constructing bubbling solutions of the Brezis-Nirenberg problem~\eqref{eq0001} in general bounded domains in the dimension $N=3$ in the future.

\vskip0.2in

\subsection{Main results}
We introduce necessary notations to state our main results.  Let $\lambda_{1}<\lambda_{2}<\cdots<\lambda_{\kappa}\to+\infty$ as $\kappa\to\infty$ be the eigenvalues of the Laplacian operator $-\Delta$ with the Dirichlet boundary condition and denote the eigenspace related to $\lambda_{\kappa}$ by
\begin{eqnarray}\label{eqnnWu0001}
\Xi_{\kappa}=\bbr e_1\oplus\bbr e_2\oplus\cdots\oplus\bbr e_{m_\kappa}
\end{eqnarray}
where $m_\kappa\in\bbn$ is the multiplicity of $\lambda_{\kappa}$ and $\{e_i\}$ is an orthogonal system.  Besides, we denote the standard Aubin-Talenti bubble for every $\mu>0$ and $\xi\in\Omega$ by $\mathcal{U}_{\mu,\xi}$ while, we denote its projection into $H^1_0(\Omega)$ by $\mathcal{W}_{\mu,\xi}$, that is, $\mathcal{U}_{\mu,\xi}$ and $\mathcal{W}_{\mu,\xi}$ are the unique solutions of the following equations
\begin{eqnarray}\label{pblim}
\left\{\begin{aligned}
&-\Delta u=u^{\frac{N+2}{N-2}},\quad&\mbox{in}\,\,  \bbr^N,\\
&u>0,\quad&\mbox{in}\,\,  \bbr^N,\\
&u(\xi)=\max_{x\in\bbr^N}u(x)=\mu^{-\frac{N-2}{2}},\\
&u\in D^{1, 2}(\bbr^N)
\end{aligned}\right.
\end{eqnarray}
and
\begin{eqnarray}\label{Pre0001}
\left\{\aligned
&-\Delta w=\mathcal{U}_{\mu,\xi}^{\frac{N+2}{N-2}},\quad&\mbox{in}\,\, \Omega,\\
&w=0,\quad&\mbox{on}\,\, \partial\Omega.
\endaligned\right.
\end{eqnarray}
Moreover, it is well known that (see \cite{A}, \cite{CGS}, \cite{T}) the Aubin-Talenti bubble $\mathcal{U}_{\mu,\xi}$ is explicitly given by
\begin{eqnarray}\label{talanti}
\mathcal{U}_{\mu,\xi}=\alpha_N\mu^{\frac{N-2}{2}}\left(\frac{1}{\mu^{2}+|x-\xi|^2}\right)^{\frac{N-2}{2}}=\mu^{-\frac{N-2}{2}}\mathcal{U}_{1,0}\left(\frac{x-\xi}{\mu}\right)
\end{eqnarray}
with $\alpha_N=[N(N-2)]^{\frac{N-2}{4}}$.  Now, our main result for $N=4$ can be stated as follows.
\begin{theorem}\label{Thm0001}
Let $N=4$, $k\geq1$ and $1\leq m\leq m_{\kappa}$ where $m_{\kappa}$ is the dimension of the eigenspace $\Xi_{\kappa}$ corresponding to $\lambda_{\kappa}$.  Then the Brezis-Nirenberg problem~\eqref{eq0001} has a solution of the form
\begin{eqnarray*}
u_\lambda=\sum_{j=1}^{k}\beta_j\mathcal{W}_{\mu_j,\xi_j}+\sum_{i=1}^{m}\tau_ie_i+\varphi_\lambda
\end{eqnarray*}
as $\lambda-\lambda_{\kappa}\to0^+$, where $\lim_{\lambda-\lambda_{\kappa}\to0^+}\tau_l=0$ for all $1\leq l\leq m$, $\lim_{\lambda-\lambda_{\kappa}\to0^+}\max\{\mu_j\}=0$ and $\lim_{\lambda-\lambda_{\kappa}\to0^+}\xi_j=\xi_{j,0}$ for all $1\leq j\leq k$, and $\lim_{\lambda-\lambda_{\kappa}\to0^+}(\max\{|\tau_l|\})^{-1}\|\varphi_\lambda\|=0$.  Moreover, $\tau_l=(t_{l,0}+o(1))\tau$ for all $1\leq l\leq m$ with
\begin{eqnarray*}
\tau=(1+o(1))\frac{c_2\left(\sum_{j=1}^k\left|\sum_{l=1}^{m}t_{l,0}e_{l}(\xi_{j,0})\right|^2\right)\exp\left(-\frac{c_1\left(\sum_{j=1}^k\left|\sum_{l=1}^{m}t_{l,0}e_{l}(\xi_{j,0})\right|^2\right)^2}{\left(\left(\sum_{j=1}^k s_{j,0}^2\right)\left\|\sum_{l=1}^{m}t_{l,0}e_l\right\|_2^2\right)\ve}\right)}{\left\|\sum_{l=1}^{m}t_{l,0}e_l\right\|_2^2\ve}
\end{eqnarray*}
and $\mu_j=(s_{j,0}+o(1))\mu$, $\beta_j=-\text{sgn}\left(\sum_{l=1}^{m}t_{l,0}e_{l}(\xi_{j,0})\right)$ with
\begin{eqnarray*}
\mu=(1+o(1))\exp\left(-\frac{c_1\left(\sum_{j=1}^k\left|\sum_{l=1}^{m}t_{l,0}e_{l}(\xi_{j,0})\right|^2\right)^2}{\left(\left(\sum_{j=1}^k s_{j,0}^2\right)\left\|\sum_{l=1}^{m}t_{l,0}e_l\right\|_2^2\right)\ve}\right)
\end{eqnarray*}
and $s_{j,0}=\left|\sum_{l=1}^{m}t_{l,0}e_{l}(\xi_{j,0})\right|$ for all $1\leq j\leq k$,
where $c_1, c_2>0$ are constants that can be precisely computed, and
\begin{eqnarray*}
(\pmb{t}_0, \pmb{\xi}_0)=(t_{1,0},t_{2,0},\cdots, t_{m,0},\xi_{1,0},\xi_{2,0},\cdots,\xi_{k,0})
\end{eqnarray*}
is a solution of the variational problem
\begin{eqnarray*}
\max_{\mathbb{S}^{m-1}\times\overline{\Omega}^k}\frac{\left(\sum_{j=1}^k\left(\sum_{l=1}^{m}\nu_{l}e_{l}(\eta_{j})\right)^2\right)}{\left\|\sum_{l=1}^{m}\nu_{l}e_l\right\|_2^2},
\end{eqnarray*}
where $\mathbb{S}^{m-1}$ is the unit sphere in $\bbr^m$.
\end{theorem}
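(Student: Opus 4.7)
The plan is to apply a finite-dimensional Lyapunov-Schmidt reduction. Setting $\ve = \lambda - \lambda_{\kappa} > 0$, I look for a solution of the form $u_\lambda = V + \varphi_\lambda$ with the ansatz
\[
V = V(\pmb{\beta}, \pmb{\mu}, \pmb{\xi}, \pmb{\tau}) = \sum_{j=1}^{k}\beta_j\mathcal{W}_{\mu_j,\xi_j}+\sum_{l=1}^{m}\tau_l e_l,
\]
where $\beta_j \in \{-1,+1\}$ and the parameters $(\mu_j, \xi_j, \tau_l)$ vary in appropriate neighborhoods of the claimed asymptotic values. The correction $\varphi_\lambda$ is sought in the $H_0^1(\Omega)$-orthogonal complement of the approximate kernel
\[
K_\lambda = \mathrm{span}\{\mathcal{Z}_{\lambda,j,i} : 1\leq j\leq k,\ 1\leq i\leq N+1\},
\]
where $\mathcal{Z}_{\lambda,j,i}$ denotes the $H_0^1(\Omega)$-projection of $\partial \mathcal{U}_{\mu_j,\xi_j}/\partial \xi_{j,i}$ for $1\leq i\leq N$ and of $\partial \mathcal{U}_{\mu_j,\xi_j}/\partial \mu_j$ for $i=N+1$. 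A key structural choice is that the eigenfunctions $e_l$ are \emph{not} added to $K_\lambda$, because the coefficients $\tau_l$ are themselves treated as free reduction parameters; this distinguishes the present construction from the usual reduction performed at a non-resonant $\lambda$.

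Next I would solve the projected equation for $\varphi_\lambda \in K_\lambda^{\perp}$ by a contraction mapping argument. This requires a uniform invertibility estimate, as $\ve\to 0^+$, for the linearized operator $L_\lambda \varphi := -\Delta\varphi - \lambda\varphi - \frac{N+2}{N-2}|V|^{4/(N-2)}\varphi$ on $K_\lambda^{\perp}$, which follows by combining the classical non-degeneracy of the single Aubin-Talenti family with a contradiction argument based on the spectral decomposition of $-\Delta$ relative to $\Xi_{\kappa}$; here the fact that $(\tau_l)$ are parameters (so $\Xi_{\kappa}$ appears in the range rather than in the kernel) is crucial to avoid resonance. Controlling the ansatz error $R_\lambda := -\Delta V - \lambda V - |V|^{4/(N-2)}V$ in $H^{-1}$ by the bubble self-interactions and by the bubble-eigenfunction $L^2$-coupling, the fixed-point argument yields $\varphi_\lambda$ with $\|\varphi_\lambda\|$ strictly smaller than both $\tau$ and $\mu$. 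The problem thus reduces to finding critical points of
\[
\mathcal{J}_\lambda(\pmb{\beta}, \pmb{\mu}, \pmb{\xi}, \pmb{\tau}) := I_\lambda(V + \varphi_\lambda),
\]
where $I_\lambda$ denotes the Brezis-Nirenberg energy functional.

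The core analytic work is the expansion of $\mathcal{J}_\lambda$ in $N=4$. Exploiting the $N=4$-specific identities $\|\mathcal{W}_{\mu,\xi}\|_{L^2}^2 \sim \mu^2|\log\mu|$ and $\int_{\Omega}\mathcal{W}_{\mu,\xi}^{3}\,e_l \sim \mu\, e_l(\xi)$, together with the standard Rey-type expansion for the quadratic self-interactions of the bubbles, I expect an expansion of the form
\[
\mathcal{J}_\lambda = C_0 + c_1\sum_{j=1}^{k}\mu_j^2|\log\mu_j| - c_2\sum_{j=1}^{k}\mu_j\Big|\sum_{l=1}^{m}\tau_l e_l(\xi_j)\Big| - \frac{\ve}{2}\Big\|\sum_{l=1}^{m}\tau_l e_l\Big\|_{L^2}^2 + \mathrm{l.o.t.},
\]
once the sign choice $\beta_j = -\mathrm{sgn}\bigl(\sum_l \tau_l e_l(\xi_j)\bigr)$ is enforced, which minimises the bubble-eigenfunction cross term. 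Optimising first in $\tau$ at fixed $(\mu_j, \xi_j, t_l)$ gives $\tau \sim \mu\bigl(\sum_j s_j|\sum_l t_l e_l(\xi_j)|\bigr)\bigl/\bigl(\ve\|\sum_l t_l e_l\|_{L^2}^2\bigr)$; substituting back, the $\mu^2|\log\mu|$ term balances the negative $\mu^2/\ve$-contribution, and the resulting transcendental relation $|\log\mu| \sim \mathrm{(quotient)}/\ve$ yields exactly the exponentially small leading asymptotic for $\mu$ claimed in the theorem. The outer optimisation in $(t_l, \xi_j)$, after the inner choice $s_{j,0} = |\sum_l t_{l,0} e_l(\xi_{j,0})|$, then reduces precisely to the Rayleigh quotient stated on $\mathbb{S}^{m-1}\times\overline{\Omega}^{k}$.

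The main obstacle, in my view, is the precision required by the logarithmic balance. Because the bubble-eigenfunction coupling in $N=4$ is only of order $\mu$, the balance with $\ve\tau^2$ forces a choice of $\mu$ that is exponentially small in $1/\ve$; hence every logarithmic factor, every coefficient of a leading term, and every correction coming from $\varphi_\lambda$ has to be tracked accurately enough that the constants in the exponent are truly determined, and the standard Lyapunov-Schmidt estimate $\|\varphi_\lambda\| = o(\tau)$ must be sharpened both in $H^1$ and in $L^{\infty}$ norms to justify the computation of $\mathcal{J}_\lambda$ up to the required order. The uniform invertibility of $L_\lambda$ in the presence of the near-resonant eigenspace $\Xi_{\kappa}$ also requires careful book-keeping, resting exactly on the fact that $(\tau_l)$ are treated as parameters rather than as elements of the kernel. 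Once these sharp estimates are in place, the theorem is concluded by a compactness/maximisation argument for the Rayleigh quotient on $\mathbb{S}^{m-1}\times\overline{\Omega}^{k}$, which automatically produces the maximiser $(\pmb{t}_0, \pmb{\xi}_0)$ and, via the chain of optimisations above, the asymptotic formulas for $\mu$, $\tau$, and $\beta_j$.
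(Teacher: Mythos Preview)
Your overall architecture---Lyapunov--Schmidt reduction, the $\mu^2|\log\mu|$ versus $\mu\tau$ versus $\ve\tau^2$ balance, and the reduction to the Rayleigh quotient on $\mathbb{S}^{m-1}\times\overline{\Omega}^k$---matches the paper. But there is a genuine gap in your handling of the resonance. You exclude the eigenfunctions $e_l$ from the approximate kernel $K_\lambda$, arguing that since the $\tau_l$ are parameters ``$\Xi_\kappa$ appears in the range rather than in the kernel''. This is backwards: if $e_l\notin K_\lambda$ then (up to negligible corrections) $e_l\in K_\lambda^\perp$, and on that direction your linearized operator satisfies $L_\lambda e_l=-\ve\,e_l+O(\mu^2|\log\mu|)$ in $H^{-1}$, because in $N=4$ the bubble contribution $\langle\mathcal{U}_{\mu_j,\xi_j}^{2}e_l,e_l\rangle_{L^2}=O(\mu^2|\log\mu|)\ll\ve$. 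Hence $L_\lambda$ is \emph{not} uniformly invertible on $K_\lambda^\perp$; its inverse blows up like $\ve^{-1}$. Since the $e_l$-component of the ansatz error is of order $\mu$ (this is exactly the computation of Lemma~\ref{c1l}), the fixed point would produce a correction $\varphi_\lambda$ with $e_l$-component of order $\ve^{-1}\mu\sim\tau$, i.e.\ \emph{comparable} to the ansatz term $\tau_l e_l$ rather than $o(\tau)$. Your claimed estimate $\|\varphi_\lambda\|=o(\tau)$ therefore fails, and with it the energy expansion at the order that determines $(\pmb t_0,\pmb\xi_0)$.

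The paper resolves this by doing precisely what you reject: it sets $\mathcal{K}_\ve=\mathrm{span}\{e_i,P\psi^\ell_{\mu_j,\xi_j}\}$, so that $\varphi_\ve\perp e_i$ and the projected operator is uniformly invertible (Proposition~\ref{lininv}), yielding $\|\varphi_\ve\|=O(e^{-\mathfrak A_0/\ve})=O(\ve\tau)$ (Proposition~\ref{fixedpoint}). The price is an extra block of $m$ Lagrange multipliers, handled by testing against $e_l$. A second, less serious, difference: the paper does not expand the energy but projects the equation~\eqref{nonlin} directly onto $e_\ell$ and $P\psi^h_{\mu_n,\xi_n}$ (Lemmas~\ref{c1l}--\ref{c3l}) to obtain the algebraic system~\eqref{redprob}, thereby avoiding the $C^1$-in-parameters estimate on $\varphi_\ve$ that your variational route would require. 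The limit system~\eqref{Redp0012} is then decoupled by the explicit observation that its third equation admits the solution $s_{j,0}=\bigl|\sum_l t_{l,0}e_l(\xi_{j,0})\bigr|$, after which the remaining equations are exactly the Euler--Lagrange system of the Rayleigh quotient~\eqref{Redp0040} (Lemma~\ref{LemRedp0005}).
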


Our main result for $N=5$ can be stated as follows.
\begin{theorem}\label{Thm0002}
Let $N=5$ and $1\leq m\leq m_{\kappa}$ where $m_{\kappa}$ is the dimension of the eigenspace $\Xi_{\kappa}$ corresponding to $\lambda_{\kappa}$.  Let
$\pmb{t}_0=(t_{1,0},t_{2,0},\cdots, t_{m,0})$
be the nontrivial solution of the variational problem
\begin{eqnarray*}
\max_{\overrightarrow{\nu}\in\bbr^m}\left(\frac{1}{2}\left\|\sum_{n=1}^{m}\nu_{n}e_n\right\|_{2}^{2}-\frac{1}{2^*}\left\|\sum_{n=1}^{m}\nu_{n}e_n\right\|_{2^*}^{2^*}\right)
\end{eqnarray*}
and $n_{\kappa}$ be the number of the nodal domains of the function $\sum_{l=1}^{m}t_{l,0}e_l(x)$.  Then for every $1\leq k\leq n_\kappa$, the Brezis-Nirenberg problem~\eqref{eq0001} has a solution of the form
\begin{eqnarray*}
u_\lambda=\sum_{j=1}^{k}\beta_j\mathcal{W}_{\mu_j,\xi_j}+\sum_{i=1}^{m}\tau_ie_i+\varphi_\lambda
\end{eqnarray*}
as $\lambda-\lambda_{\kappa}\to0^-$, where $\lim_{\lambda-\lambda_{\kappa}\to0^-}\tau_l=0$ for all $1\leq l\leq m$, $\lim_{\lambda-\lambda_{\kappa}\to0^-}\max\{\mu_j\}=0$ and $\lim_{\lambda-\lambda_{\kappa}\to0^-}\xi_j=\xi_{j,0}$ for all $1\leq j\leq k$, and $\lim_{\lambda-\lambda_{\kappa}\to0^-}(\max\{|\tau_l|\})^{-1}\|\varphi_\lambda\|=0$.  Moreover, $\tau_l=(t_{l,0}+o(1))\tau$ for all $1\leq l\leq m$ with
\begin{eqnarray*}
\tau=(1+o(1))\left(\frac{\left\|\sum_{l=1}^{m}t_{l,0}e_l\right\|_2^2}{\left\|\sum_{n=1}^{m}t_{n,0}e_n\right\|_{2^*}^{2^*}}\right)^{\frac{3}{4}}|\ve|^{\frac{3}{4}},
\end{eqnarray*}
$\mu_j=(s_{j,0}+o(1))\mu$ and $\beta_j=-\text{sgn}\left(\sum_{l=1}^{m}t_{l,0}e_{l}(\xi_{j,0})\right)$ for all $1\leq j\leq k$ with
\begin{eqnarray*}
\mu=(1+o(1))\left(\frac{c_3\left(\sum_{j=1}^k\left|\sum_{l=1}^{m}t_{l,0}e_{l}(\xi_{j,0})\right|^2\right)}{\left(\sum_{j=1}^k s_{j,0}^2\right)}\right)^2\left(\frac{\left\|\sum_{l=1}^{m}t_{l,0}e_l\right\|_2^2}{\left\|\sum_{n=1}^{m}t_{n,0}e_n\right\|_{2^*}^{2^*}}\right)^{\frac{3}{2}}|\ve|^{\frac{3}{2}},
\end{eqnarray*}
where $c_3>0$ is a constant that can be precisely computed, $\{\xi_{j,0}\}$ are either local maximum points or local minimum points of $\sum_{l=1}^{m}t_{l,0}e_l(x)$ and $\pmb{s}_0=(s_{1,0},s_{2,0},\cdots, s_{m,0})$ is a nontrivial solution of the variational problem
\begin{eqnarray*}
\max_{\overrightarrow{\nu}\in(\bbr_+)^k}\left(\sum_{j=1}^{k}\left|\sum_{l=1}^{m}t_{l,0}e_l(\xi_{j,0})\right|^2\frac{\nu_{j}^2}{2}-
\frac{1}{6}\left(\sum_{j=1}^k\left|\sum_{l=1}^{m}t_{l,0}e_{l}(\xi_{j,0})\right|\nu_{j}^{3}\right)^2\right)
\end{eqnarray*}
with $(\bbr_+)^k=\{\overrightarrow{\nu}\in\bbr^k\mid \nu_j>0\text{ for all }1\leq j\leq k\}$.
\end{theorem}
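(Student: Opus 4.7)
The natural approach is a Lyapunov--Schmidt reduction built on the multi-parameter ansatz
\begin{equation*}
V=\sum_{j=1}^{k}\beta_j\mathcal{W}_{\mu_j,\xi_j}+\sum_{i=1}^{m}\tau_ie_i,\qquad u=V+\varphi_\lambda.
\end{equation*}
Setting $\ve=\lambda-\lambda_\kappa<0$, I would rescale the parameters as $\tau_i=t_i\tau$ with $\tau\sim|\ve|^{3/4}$, $\mu_j=s_j\mu$ with $\mu\sim|\ve|^{3/2}$, and $\xi_j$ in a neighborhood of interior points of $\Omega$, leaving the signs $\beta_j=\pm1$ to be chosen at the end. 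These exponents are the unique ones in $N=5$ that balance the Brezis--Nirenberg energy evaluated on the ansatz, as emerges from the expansion of the reduced functional.

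The infinite-dimensional step fixes $(\overrightarrow{\tau},\overrightarrow{\mu},\overrightarrow{\xi})$ in an appropriate neighborhood of $(\overrightarrow{t}_0\tau,\overrightarrow{s}_0\mu,\overrightarrow{\xi}_0)$ and solves for $\varphi_\lambda$ in the $L^2$-orthogonal complement $K^\perp$ of the finite-dimensional kernel $K=\mathrm{span}\{e_i,\wi,\wj\}$. Writing the equation as $L_\lambda\varphi_\lambda=N(\varphi_\lambda)+E$, with $L_\lambda$ the linearization at $V$, $E=-\Delta V-\lambda V-|V|^{4/3}V$ the error, and $N(\varphi)$ the remaining superquadratic part, the decisive analytic point is a uniform spectral bound for $L_\lambda$ on $K^\perp$ in a suitable weighted norm. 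This follows from combining the nondegeneracy of the Aubin--Talenti bubble with the fact that orthogonality to $\{e_i\}_{i=1}^{m}$ removes the resonant eigenspace $\Xi_\kappa$. A contraction-mapping argument then produces $\varphi_\lambda$ with $\|\varphi_\lambda\|=o(\max_l|\tau_l|)$.

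The finite-dimensional step is to find a critical point of the reduced functional $\jl:=I_\lambda(V+\varphi_\lambda)$. The leading contribution, at order $|\ve|^{5/2}$, comes from the eigenfunction block alone and equals
\begin{equation*}
\tau^{2}\cdot\tfrac{|\ve|}{2}\Big\|\textstyle\sum_{l} t_{l}e_{l}\Big\|_{2}^{2}-\tau^{2^*}\cdot\tfrac{1}{2^*}\Big\|\textstyle\sum_{l} t_{l}e_{l}\Big\|_{2^*}^{2^*}+o(|\ve|^{5/2}),
\end{equation*}
whose maximization in $(\overrightarrow{t},\tau)$ immediately gives the first variational problem of the statement and pins down $\tau\sim|\ve|^{3/4}$. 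At the next relevant order $|\ve|^{3}$, the bubble self-energies $\frac{k}{N}S^{N/2}$ are parameter-independent, and the remaining contributions decompose as: (i) a bubble--eigenfunction coupling of the form $-C_1\tau\sum_{j}\beta_j\mu_j^{3/2}e_f(\xi_j)$ produced by expanding $|V|^{2^*}$ around each bubble centre, with $e_f=\sum_l t_{l,0}e_l$; (ii) a diagonal quadratic bubble term from $-\tfrac{\lambda_\kappa}{2}\int\mathcal{W}_{\mu_j,\xi_j}^{2}$, of shape $-C_2\sum_{j}\mu_j^{2}$, where one uses $\int\mathcal{U}_{\mu,\xi}^{2}\sim\mu^{2}$ in dimension $5$; and (iii) the back-reaction of the bubble centres on the eigenfunction equation, which, after substituting the shift $\delta\overrightarrow{\tau}$ determined by the coupling, contributes a term of the quadratic-in-sum form $\bigl(\sum_j a_j s_j^{3}\bigr)^{2}$. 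With $a_j=|e_f(\xi_{j,0})|$, the choice $\beta_j=-\text{sgn}(e_f(\xi_{j,0}))$ maximizes (i), and the sum of (i)--(iii) rearranges into
\begin{equation*}
C_0|\ve|^{3}\Big[\tfrac{1}{2}\sum_{j}a_{j}^{2}s_{j}^{2}-\tfrac{1}{6}\Big(\sum_{j}a_{j}s_{j}^{3}\Big)^{2}\Big]+o(|\ve|^{3}),
\end{equation*}
which is exactly the second variational problem in the statement. Vanishing of $\partial_{\xi_j}\jl$ at leading order gives $\nabla e_f(\xi_{j,0})=0$, so each $\xi_{j,0}$ must be a local extremum of $e_f$; the restriction $k\le n_\kappa$ arises because positivity of the $s_{j,0}$ combined with the sign constraint on $\beta_j$ forces the bubble centres to lie in distinct nodal domains of $e_f$.

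The main obstacle lies in this last expansion. One must push the Lyapunov--Schmidt analysis one order deeper in the eigenfunction direction than in the bubble directions to capture item (iii), and must simultaneously discard a large number of subleading terms uniformly on the admissible parameter region, a region where $\mu\sim|\ve|^{3/2}$ is much smaller than $\tau\sim|\ve|^{3/4}$ so that several candidate error terms are comparable and must be handled with matched bookkeeping. Once the asymptotic shape of $\jl$ is in hand, existence of a critical point close to $(\overrightarrow{t}_0,\overrightarrow{s}_0,\overrightarrow{\xi}_0)$ follows from a standard degree / implicit function argument using the nondegeneracy of the maximizers of the two limiting variational problems, and that critical point produces the desired solution $u_\lambda$ via the reduction of step two.
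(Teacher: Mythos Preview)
Your overall strategy is sound and would prove the theorem, but it differs from the paper's route at the finite-dimensional step. You expand the reduced energy $\jl=I_\lambda(V+\varphi_\lambda)$ and look for critical points; the paper instead projects the full equation directly onto the kernel elements $e_\ell$ and $P\psi^h_{\mu_n,\xi_n}$, obtaining an explicit algebraic system in $(\pmb{t},\pmb{s},\pmb{\xi})$ (their Lemmas~4.2--4.3 and system~(5.1)) whose limit for $N=5$ decouples: first a scalar equation for $\pmb{t}_0$ (your first variational problem), then $\nabla e_f(\xi_{j,0})=0$, then a separate equation for $\pmb{s}_0$ (your second variational problem). The paper emphasises that this projection approach is adopted precisely to \emph{avoid} the $C^1$-estimate on $\varphi_\lambda$ with respect to the parameters that your energy route needs in order to pass from critical points of $\jl$ to solutions of the PDE; you silently absorb that estimate into ``standard'', which is legitimate (it is done in the paper you build on) but is exactly the technical overhead the authors chose to sidestep. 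Conversely, your description of the order-$|\ve|^3$ expansion, in particular the ``back-reaction'' item~(iii), is where your approach is most delicate: in the projection formulation that term never appears because the $\pmb{t}$-equation and the $\pmb{s}$-equation are written at different natural scales and solved sequentially, whereas in the energy picture you must substitute the shift $\delta\pmb{\tau}$ back into the leading block and carefully track that it lands at $|\ve|^3$. Finally, your explanation of the bound $k\le n_\kappa$ is slightly off: the paper does not use a sign-separation argument forcing bubbles into distinct nodal domains, it simply uses that $e_f$ has at least one interior local extremum (with $e_f\neq0$) per nodal domain, which furnishes at least $n_\kappa$ admissible centres.
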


\subsection{Further remarks}
Theorems~\ref{Thm0001} and \ref{Thm0002} give positive answers to the natural question~$(Q)$ since we have asserted that all eigenvalues of the Laplacian operator $-\Delta$ with Dirichlet boundary condition are concentration values of the Brezis-Nirenberg problem~\eqref{eq0001} in general bounded domains in dimensions $N=4$ and $N=5$.  Moreover, we also find the crucial functions, which play the same role of
the Kirchhoff-Routh function $\mathcal{K}(\pmb{\mu},\pmb{x})$ given by \eqref{eqn0002} for the concentration value $\lambda_0$, that governs the bubbling phenomenon of the Brezis-Nirenberg problem~\eqref{eq0001} in general bounded domains in dimensions $N=4$ and $N=5$ near every eigenvalue of the Laplacian operator $-\Delta$ with Dirichlet boundary condition.  {\it An interesting finding} is that in general bounded domains in the dimension $N=4$, the functions governing the bubbling phenomenon of the Brezis-Nirenberg problem~\eqref{eq0001}
near every eigenvalue of the Laplacian operator $-\Delta$ with Dirichlet boundary condition are coupled while, in general bounded domains in the dimension $N=5$, the functions governing the bubbling phenomenon of the Brezis-Nirenberg problem~\eqref{eq0001}
near every eigenvalue of the Laplacian operator $-\Delta$ with Dirichlet boundary condition are decoupled.

\vskip0.12in

Theorems~\ref{Thm0001} and \ref{Thm0002} are the generalizations of \cite[Theorems~1.1 and 1.2]{IV} where such bubbling solutions were only constructed for the one-bubble case and $\kappa=1$, that is, the parameter $\lambda$ is near the first eigenvalue $\lambda_1$, in general (symmetric) bounded domains.  However, since we want to construct bubbling solutions including the multi-bubble case in general bounded domains in Theorems~\ref{Thm0001} and \ref{Thm0002}, we need to modify the arguments for \cite[Theorems~1.1 and 1.2]{IV} in a nontrivial way by introducing a {\it new strategy} to solve the reduced finite-dimensional problems.
Let us briefly sketch the proof of these two theorems in what follows.  For every $k\geq1$ and $\kappa\geq1$, we first set $\beta_j=1$ or $-1$ for all $1\leq j\leq k$ and apply the strategy in \cite{IV} to reduce the construction of the solutions of the form \eqref{eqnWu0001} for $\lambda\to\lambda_{\kappa}$.  It is worth pointing out that this reduction in general bounded domains is similar to that in the symmetric domains with slightly complicated computations and estimates since also $k\geq1$ and $\kappa\geq1$ are arbitrary in our construction.  Moreover, to avoid use the inner-out gluing type argument as that in \cite{IV} for $N=5$ in this reduction, we slightly improve the estimate on the correction $\varphi_\lambda$.  We next project the whole problem to the kernel of the involved linear operator to write down the reduced finite-dimensional problems, which read as
\begin{eqnarray}\label{expan-finite}
\left\{\aligned
0=&-d_{1}\mu_j^{\frac{N-2}{2}}\sum_{l=1}^{m}\tau_l\frac{\partial e_l(\xi_j)}{\partial x_i}+h.o.t.,\quad 1\leq j\leq k,\ 1\leq l\leq m,\\
0=&\tau_l\ve\|e_l\|_2^2+\sum_{j=1}^{k}d_{2}\beta_j\mu_j^{\frac{N-2}{2}}e_l(\xi_j)+\left\langle\left|\sum_{n=1}^{m}\tau_ne_n\right|^{\frac{4}{N-2}}\left(\sum_{n=1}^{m}\tau_ne_n\right), e_l\right\rangle_{L^2}+h.o.t.,\quad 1\leq l\leq m,\\
0=&\sum_{l=1}^{m}d_{3}\mu_j^{\frac{N-4}{2}}\tau_le_l(\xi_j)+\left\{\aligned
&\lambda_\kappa d_{4}\beta_j\mu_j|\log \mu_j|+h.o.t.,\quad N=4,\\
&\lambda_\kappa d_{4}\beta_j\mu_j+h.o.t.,\quad N=5,
\endaligned\right.
\quad 1\leq j\leq k,
\endaligned\right.
\end{eqnarray}
where $\ve=\lambda-\lambda_{\kappa}$ and $d_1,d_2,d_3,d_4>0$ are constants that can be precisely computed.  We remark that our strategy, that is, projecting the whole problem to the kernel of the involved linear operator to write dowm the reduced finite-dimensional problems, is slightly simpler than that in \cite{IV} since we do not need the $C^1$-estimate on the correction $\varphi_\lambda$.  Because $k\geq1$ and $\kappa\geq1$ are arbitrary in our construction, the major difficult in solving \eqref{expan-finite} is that the limit problem of \eqref{expan-finite} is a system (see \eqref{Redp0012} for $N=4$ and \eqref{Redp0007} for $N=5$ for more details).  To solve \eqref{expan-finite}, we shall construct stable solutions of the limit system, as that in \cite{IV} (the definition of stable solutions can be found in \cite[Definition 3.5]{IV}).  Our {\it key observation} in solving \eqref{expan-finite} is that one of the equations of the limit system of \eqref{expan-finite} for $N=5$ is decoupled while, for $N=4$, one of the equations of the limit problem of \eqref{expan-finite} has a nontrivial solution with a very special form, which could help us to decouple the limit system for $N=4$.  Thus, we can use a nonlinear Gaussian elimination method, together with the variational arguments, to construct stable solutions of the limit problem of \eqref{expan-finite} to complete our proofs of Theorems~\ref{Thm0001} and \ref{Thm0002}.

\vskip0.12in

Even though Theorems~\ref{Thm0001} and \ref{Thm0002} are the generalizations of \cite[Theorems~1.1 and 1.2]{IV} from the one-bubble case to the multi-bubble case, we still find interesting phenomenon in this generalization for the bubbling solutions of the Brezis-Nirenberg problem~\eqref{eq0001} of the form
\begin{eqnarray}\label{eqnWu0001}
u_\lambda=\sum_{j=1}^{k}\beta_j\mathcal{W}_{\mu_j,\xi_j}+\sum_{i=1}^{m}\tau_ie_i+\varphi_\lambda
\end{eqnarray}
as $\lambda-\lambda_{\kappa}\to0$.  Indeed, by Theorems~\ref{Thm0001} and \ref{Thm0002}, we observed that the number of the bubbles has no restriction for $N=4$.  Thus, there are {\it arbitrary number} of multi-bump bubbing solutions of the Brezis-Nirenberg problem~\eqref{eq0001} of the form \eqref{eqnWu0001} as $\lambda-\lambda_{\kappa}\to0^+$ for any given $\kappa\geq1$ in dimension $N=4$.  Instead, the number of the bubbles will be governed by the number of critical points of the function $\sum_{l=1}^{m}t_{l,0}e_l(x)$ where $\pmb{t}_0=(t_{1,0},t_{2,0},\cdots, t_{m,0})$ is the critical point of the function
\begin{eqnarray*}
\mathcal{F}(\pmb{\nu})=\frac{1}{2}\left\|\sum_{n=1}^{m}\nu_{n}e_n\right\|_{2}^{2}-\frac{1}{2^*}\left\|\sum_{n=1}^{m}\nu_{n}e_n\right\|_{2^*}^{2^*},
\end{eqnarray*}
$\{e_i\}$ is an orthogonal system of the eigenspace associated to $\lambda_{\kappa}$ given by \eqref{eqnnWu0001} and $1\leq m\leq m_{\kappa}$ with $m_{\kappa}$ being the multiplicity of $\lambda_{\kappa}$.
Since $\mathcal{F}(\pmb{\nu})$ is a smooth function in $\mathbb{R}^{m_{\kappa}}$ with $m_{\kappa}<+\infty$ for every $\kappa\geq1$, it has only finitely many critical points, which, together with the smoothness of $\sum_{l=1}^{m}t_{l,0}e_l(x)$, implies that there are {\it only finite number} of multi-bump bubbing solutions of the form \eqref{eqnWu0001} as $\lambda-\lambda_{\kappa}\to0^-$ for any given $\kappa\geq1$ in dimension $N=5$.

\subsection{Notations}
Throughout this paper, $a\sim b$ means that $C'b\leq a\leq Cb$ and $a\lesssim b$ means that $a\leq Cb$ where $C$ and $C'$ are positive constants, which are possibly various in different places.  The inner product in $H^1_0(\Omega)$ and $L^2(\Omega)$ will be denoted by
\begin{eqnarray*}
\langle u, v\rangle:=\int_\Omega \nabla u \nabla v\, dx\quad\text{and}\quad \langle u, v\rangle_{L^2}:=\int_\Omega uv\, dx,
\end{eqnarray*}
respectively.

\section{Setting of the problem}
Let $i^*:L^{\frac{2N}{N+2}}(\Omega)\to H^1_0(\Omega)$ be the adjoint operator of the embedding $i:H^1_0(\Omega)\to L^{\frac{2N}{N+2}}(\Omega)$, namely if $v\in L^{\frac{2N}{N+2}}(\Omega)$ then $u=i^*(v)\in H^1_0(\Omega)$ is the unique solution of the equation
\begin{eqnarray*}
\left\{\begin{aligned}
&-\Delta u=v\quad&\mbox{in}\,\, \Omega,\\
&u=0\quad&\mbox{on}\,\, \partial\Omega.
\end{aligned}\right.
\end{eqnarray*}
By continuity, it follows that
\begin{eqnarray*}
\|i^*(v)\|\lesssim \|v\|_{L^{\frac{2N}{N+2}}},\quad \forall v\in L^{\frac{2N}{N+2}}(\Omega).
\end{eqnarray*}
Hence, we can rewrite the problem \eqref{eq0001}
 in the following way
\begin{equation}\label{pbr}
\left\{\begin{aligned}&u=i^*\left[f(u)+\lambda u\right],\\
& u\in H^1_0(\Omega),
\end{aligned}\right.
\end{equation}
where $f(s)=|s|^{\frac{4}{N-2}}s$.

\vskip0,12in

For every $\mu>0$ and $\xi\in\Omega$, we define
\begin{eqnarray*}
\psi^\ell_{\mu, \xi}=\left\{\begin{aligned}
&\frac{\partial\mathcal U_{\mu, \xi}}{\partial\mu}=\frac{\alpha_N(N-2)}{2}\mu^{\frac{N-4}{2}}\frac{|x-\xi|^2-\mu^2}{(\mu^2+|x-\xi|^2)^{\frac N 2}},\quad&\mbox{if}\,\,  \ell=0,\\
&\frac{\partial\mathcal U_{\mu, \xi}}{\partial x_\ell}=-\alpha_N(N-2)\mu^{\frac{N-2}{2}}\frac{x_\ell-\xi_\ell}{(\mu^2+|x-\xi|^2)^{\frac N 2}},\quad&\mbox{if}\,\,  1\leq\ell\leq N.
\end{aligned}\right.
\end{eqnarray*}
Then it is well known (see \cite{BE1991}) that $\psi^\ell_{\mu, \xi}$ are all the solutions of the following linearized problem
\begin{eqnarray*}
\left\{\begin{aligned}
&-\Delta\psi=\frac{N+2}{N-2}\mathcal U_{\mu, \xi}^{\frac{4}{N-2}}\psi,\quad& \mbox{in }\,\, \bbr^N,\\
&\psi\in D^{1,2}(\mathbb{R}^N),
\end{aligned}\right.
\end{eqnarray*}
where $D^{1,2}(\bbr^N)=\dot{W}^{1,2}(\bbr^N)$ is the usual homogeneous Sobolev space (cf. \cite[Definition~2.1]{FG2021}).
Let us define the projection of $\psi^\ell_{\mu, \xi}$ by $P\psi^\ell_{\mu, \xi}$, that is $P\psi^\ell_{\mu, \xi}$ is the unique solution of the following equation
\begin{eqnarray}\label{projpsi-eqn}
\left\{\begin{aligned}
&-\Delta \psi=\frac{N+2}{N-2}\mathcal U_{\mu, \xi}^{\frac{4}{N-2}}\psi^\ell_{\mu, \xi}\quad&\mbox{in}\,\,  \Omega,\\
&\psi=0\quad&\mbox{on}\,\, \partial\Omega.
\end{aligned}\right.
\end{eqnarray}
It is well known (see \cite{R}) that
\begin{eqnarray}\label{projpsi}
P\psi^\ell_{\mu, \xi}=\left\{\begin{aligned}
&\psi^0_{\mu, \xi}-\alpha_N \frac{N-2}{2}\mu^{\frac{N-4}{2}}H(x, \xi)+\mathcal O\left(\mu^{\frac{N}{2}}\right), \quad&\mbox{if}\,\, \ell=0,\\
&\psi^\ell_{\mu, \xi}-\alpha_N \mu^{\frac{N-2}{2}}\frac{\partial H}{\partial \xi_\ell}(x, \xi)+\mathcal O\left(\mu^{\frac{N+2}{2}}\right),\quad&\mbox{if}\,\,  1\leq\ell\leq N.
\end{aligned}\right.
\end{eqnarray}
Moreover,
\begin{eqnarray}\label{projpsiA}
P\psi^\ell_{\mu, \xi}=\left\{\begin{aligned}
&\frac{(N-2)\alpha_N}{2\gamma_N}\mu^{\frac{N-4}{2}} G(x,\xi)+\mathcal{R}^0_{\mu, \xi},\quad&\mbox{if}\,\, \ell=0,\\
&\frac{\alpha_N}{\gamma_N}\mu^{\frac{N-2}{2}}\frac{\partial G(x,\xi)}{\partial \xi_{l}}+\mathcal{R}^\ell_{\mu, \xi}, \quad&\mbox{if}\,\,  1\leq\ell\leq N,
\end{aligned}\right.
\end{eqnarray}
where
\begin{eqnarray*}
\left|\mathcal{R}^0_{\mu, \xi}\right|\lesssim\mu^{\frac{N}{2}}+\left\{\aligned
&\mu^{\frac{N-4}{2}}|x-\xi|^{2-N},\quad&\mbox{if}\,\,  |x-\xi|\lesssim \mu,\\
&\mu^{-\frac{N}{2}},\quad&\mbox{if}\,\,  |x-\xi|\sim \mu,\\
&\mu^{\frac{N}{2}}|x-\mu|^{-N},\quad&\mbox{if}\,\,  |x-\xi|\gtrsim \mu
\endaligned\right.
\end{eqnarray*}
and
\begin{eqnarray*}
\left|\mathcal{R}^\ell_{\mu, \xi}\right|\lesssim\mu^{\frac{N+2}{2}}+\left\{\aligned
&\mu^{\frac{N-2}{2}}|x-\xi|^{1-N},\quad&\mbox{if}\,\,  |x-\xi|\lesssim \mu,\\
&\mu^{-\frac{N}{2}},\quad&\mbox{if}\,\,  |x-\xi|\sim \mu,\\
&\mu^{\frac{N+2}{2}}|x-\mu|^{-(1+N)},\quad&\mbox{if}\,\,  |x-\xi|\gtrsim \mu
\endaligned\right.
\end{eqnarray*}
for $1\leq\ell\leq N$.  We also have the following well known results (see \cite{R})
\begin{eqnarray}\label{exp}
\mathcal{W}_{\mu,\xi}:=\mathcal{U}_{\mu,\xi}-\alpha_N\mu^{\frac{N-2}{2}}H(x, \xi)+\mathcal O\left(\mu^{\frac{N+2}{2}}\right)\quad \mbox{as}\ \mu\to 0
\end{eqnarray}
and
\begin{eqnarray}\label{expA}
\left|\mathcal{W}_{\mu, \xi}-\frac{\alpha_N}{\gamma_N}\mu^{\frac{N-2}{2}}G(x,\xi)\right|\lesssim \mu^{\frac{N+2}{2}}+\left\{\aligned
  &\mu^{\frac{N-2}{2}}|x-\xi|^{2-N},\quad&\mbox{if}\,\,  |x-\xi|\lesssim\mu, \\
  &\mu^{\frac{2-N}{2}}, \quad&\mbox{if}\,\, |x-\xi|\sim\mu,\\
  &\mu^{\frac{N+2}{2}}|x-\xi|^{-N},\quad&\mbox{if}\,\,  |x-\xi|\gtrsim\mu.
\endaligned\right.
\end{eqnarray}

\vskip0,12in

Let $k\in\bbn$ and $\lambda_{\kappa}:=\lambda_{\kappa}(\Omega)$ be the $\kappa$-th eigenvalue of the Laplacian operator $-\Delta$ with the Dirichlet boundary condition.  Similar to \cite{IV}, we look for a solution of the form
\begin{eqnarray}
u_\ve&&=Z_\ve+\varphi_\ve\notag\\
&&=\underbrace{\sum_{j=1}^{k}\beta_j\mathcal{W}_{\mu_j,\xi_j}}_{Z_\ve^1}+\underbrace{\sum_{i=1}^{m}\tau_ie_i}_{:=Z_\ve^2}+\varphi_\ve
\label{solution}
\end{eqnarray}
where $\ve=|\lambda-\lambda_\kappa|$, $\{e_i\}$ is an orthogonal system of the eigenspace associated with $\lambda_{\kappa}$ given by \eqref{eqnnWu0001} and $\varphi_\ve$ will be a smaller correction as $\ve\to0^+$.  Clearly,
\begin{eqnarray}\label{lambda}
\lambda=\lambda_\kappa+\text{sgn}(\lambda-\lambda_\kappa)\ve.
\end{eqnarray}
We set the parameters $\pmb{\tau}$, $\pmb{\mu}$ and $\pmb{\xi}$ to be
\begin{eqnarray}\label{taui+muj}
\left\{\begin{aligned}
&\tau_i=t_i\tau\quad&\mbox{for all}\,\,  1\leq i\leq m,\\
&\mu_j=s_j\mu\quad&\mbox{for all}\,\,  1\leq j\leq k
\end{aligned}\right.
\end{eqnarray}
and $\pmb\xi\in O_\rho$, where
\begin{equation}\label{omegarho}
O_\rho=\left\{ \pmb{\xi}:=(\xi_1, \ldots, \xi_k)\in \Omega^k\mid \min_{1\leq i\leq k}\mbox{dist}(\xi_{i}, \partial \Omega) \geq 2\rho\text{ and } \min_{1\leq i\not=j\leq k}|\xi_{i} - \xi_{j}| \geq 2\rho\right\}
\end{equation}
with $\rho>0$ small and $\pmb{t}$, $\pmb{s}$ and $\pmb{\xi}$ also satisfy
\begin{enumerate}
\item[$(\bf {C})$]\quad $\lim_{\ve\to0^+}t_i=t_{i,0}\not=0$, $\lim_{\ve\to0^+}s_j=s_{j,0}>0$ and $\lim_{\ve\to0^+}\xi_j=\xi_{j,0}\in\Omega$.
\end{enumerate}
By the condition~$(\bf{C})$, we can write
\begin{eqnarray*}
\left\{\begin{aligned}
&\tau_i=(1+o(1))\tau_{i,0}\quad&\mbox{for all}\,\, 1\leq i\leq m,\\
&\mu_j=(1+o(1))\mu_{j,0}\quad&\mbox{for all}\,\,  1\leq j\leq k,
\end{aligned}\right.
\end{eqnarray*}
where $\tau_{i,0}=t_{i,0}\tau$ and $\mu_{j,0}=s_{j,0}\mu$.
We also let
\begin{eqnarray*}
\mathfrak A_0:= \left(\frac{c_1\left(\sum_{j=1}^k\left|\sum_{i=1}^{m}t_{i,0}e_{i}(\xi_{j,0})\right|^2\right)^2}{\left(\sum_{j=1}^k s_{j,0}^2\right)\left\|\sum_{i=1}^{m}t_{i,0}e_i\right\|_2^2}\right).
\end{eqnarray*}
We choose
\begin{eqnarray}\label{tau+mu}
\mu\sim\left\{\begin{aligned}
&e^{-\frac{\mathfrak A_0}{\ve}},\quad&\mbox{if}\,\, N=4,\\
&\ve^{\frac 32},\quad&\mbox{if}\,\, N=5
\end{aligned}\right.
\quad\text{and}\quad
\tau\sim\left\{\begin{aligned}
&\frac{e^{-\frac{\mathfrak A_0}{\ve}}}{\ve},\quad&\mbox{if}\,\, N=4,\\
&\ve^{\frac 34},\quad&\mbox{if}\,\, N=5.
\end{aligned}\right.
\end{eqnarray}
Thus, we have
\begin{eqnarray*}
\mu\sim\left\{\begin{aligned}
&\ve\tau,\quad&\mbox{if}\,\, N=4,\\
&\tau^2,\quad&\mbox{if}\,\, N=5.
\end{aligned}\right.
\end{eqnarray*}
The smaller correction $\varphi_\ve$ also need to satisfy a set of orthogonal conditions.  Let
\begin{eqnarray*}
\mathcal K_\ve:={\rm span}\left\{e_i, P\psi^\ell_{\mu_j, \xi_j}\mid 1\leq i\leq m, 1\leq j\leq k\text{ and }0\leq \ell\leq N\right\}.
\end{eqnarray*}
Its orthocomplement in $H^1_0(\Omega)$ is denoted by $\mathcal K_\ve^\bot$, that is,
\begin{eqnarray}\label{kbot}
\mathcal K_\ve^\bot=\left\{\varphi\in H^1_0(\Omega)\mid \left\langle\varphi, e_i\right\rangle=\left\langle\varphi, P\psi^\ell_{\mu_j, \xi_j}\right\rangle=0\text{ for all }i, j\text{ and }\ell\right\}.
\end{eqnarray}
We also denote the projections from $H^1_0(\Omega)$ onto $\mathcal K_\ve$ and $\mathcal K_\ve^\bot$ by $\Pi_\ve$ and $\Pi_\ve^\bot$, respectively.  With these notations in hands, solving \eqref{pbr} is equivalent to solving the following  two-coupled equations:
\begin{eqnarray}\label{pbr1}
\left\{\begin{aligned}
&\Pi_\ve^\bot\left\{Z_\ve+\varphi_\ve-i^*\left[f(Z_\ve+\varphi_\ve)+(\lambda_\kappa+\text{sgn}(\lambda-\lambda_\kappa)\ve)(Z_\ve+\varphi_\ve)\right]\right\}=0,\\
&\Pi_\ve\left\{Z_\ve+\varphi_\ve-i^*\left[f(Z_\ve+\varphi_\ve)+(\lambda_\kappa+\text{sgn}(\lambda-\lambda_\kappa)\ve)(Z_\ve+\varphi_\ve)\right]\right\}=0,
\end{aligned}\right.
\end{eqnarray}
where we rewrite $\lambda$ as in \eqref{lambda}.

\vskip0,12in

As pointed out in the introduction, we shall follow the strategy in \cite{IV} to use the Ljapunov-Schmidt reduction to solve the two-coupled equations~\eqref{pbr1}, that is, we first solve the projected problem in $\mathcal K_\ve^\bot$ (the first equation in \eqref{pbr1}) and then the finite dimensional one in $\mathcal K_\ve$ (the second equation in \eqref{pbr1}).  As usual, we shall apply the fix-point argument to solve the projected problem in $\mathcal K_\ve^\bot$, which requires us to rewrite it as follows:
\begin{eqnarray}\label{first}
\mathscr L_\ve(\varphi_\ve)=-\mathscr E_\ve-\mathscr N_\ve(\varphi_\ve)
\end{eqnarray}
where
\begin{eqnarray}\label{linear}
\mathscr L_\ve(\varphi_\ve)=\Pi_\ve^\bot\left\{\varphi_\ve-i^*\left[f'\left(Z_\ve^1\right)\varphi_\ve+(\lambda_\kappa+\text{sgn}(\lambda-\lambda_\kappa)\ve)\varphi_\ve\right]\right\}
\end{eqnarray}
is a linearized operator at $Z_\ve^1$,
\begin{eqnarray}
\label{error}\mathscr E_\ve:=\Pi_\ve^\bot\left\{Z_\ve-i^*\left[(\lambda_\kappa+\text{sgn}(\lambda-\lambda_\kappa)\ve) Z_\ve +f(Z_\ve)\right]\right\}
\end{eqnarray}
is the error term and
\begin{eqnarray*}\label{N}
\mathscr N_\ve(\varphi_\ve):=\Pi^\bot\left\{-i^*\left[f(Z_\ve+\varphi_\ve)-f(Z_\ve)-f'(Z_\ve^1)\varphi_\ve\right]\right\}
\end{eqnarray*}
is an higher order term.  After solving the projected problem in $\mathcal K_\ve^\bot$, then by \eqref{first}, we deduce that the function $u_\ve$ given by \eqref{solution} satisfies the following equation
\begin{eqnarray}\label{nonlin}
-\Delta u_\ve-f(u_\ve)-(\lambda_\kappa+\text{sgn}(\lambda-\lambda_\kappa)\ve) u_\ve=\sum_{j=1, \ldots, k \atop \ell=0, \ldots, N} \mathfrak c_j^\ell \mathcal U_{\mu_j, \xi_j}^{\frac{4}{N-2}}P\psi^\ell_{\mu_j, \xi_j}+\sum_{i=1}^m \mathfrak d_i e_i
\end{eqnarray}
for some real numbers $\mathfrak c_j^\ell=\mathfrak c_j^\ell(\pmb{\tau}, \pmb{\mu}, \pmb{\xi},\ve)$ and $\mathfrak d_i=\mathfrak d_i(\pmb{\tau}, \pmb{\mu}, \pmb{\xi},\ve)$.  Now, solving the finite dimensional problem $\mathcal K_\ve$ in \eqref{pbr1} is equivalent to finding
parameters $\pmb{\tau}(\ve)$, $\pmb{\mu}(\ve)$ and $\pmb{\xi}(\ve)$ such that $\mathfrak c_j^\ell=0$ and $\mathfrak d_i=0$ for all $j, \ell$ and $i$ as $\ve\to0^+$.

\vskip0,12in

We close this section by recalling a useful result, which will be frequently used in this paper.
\begin{lemma}\label{l01}
Let $N\leq 6$. Then for any $a, b>0$,
\begin{eqnarray*}
|f(a+b)-f(a)-f'(a)b|\lesssim \left(|a|^{\frac{6-N}{N-2}}|b|^2+|b|^{\frac{N+2}{N-2}}\right),
\end{eqnarray*}
and
\begin{eqnarray*}
|f(a+b)-f(a)|\lesssim\left(|a|^{\frac{4}{N-2}}|b|+|b|^{\frac{N+2}{N-2}}+|a|^{\frac{6-N}{N-2}}|b|^2\right).
\end{eqnarray*}
\end{lemma}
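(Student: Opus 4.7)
My plan is to prove both inequalities simultaneously by splitting into two regimes according to the relative size of $a$ and $b$. Since $a,b>0$, the nonlinearity reduces to $f(s)=s^{\frac{N+2}{N-2}}$ on $(0,\infty)$, with $f'(s)=\tfrac{N+2}{N-2}s^{\frac{4}{N-2}}$ and $f''(s)=\tfrac{4(N+2)}{(N-2)^2}s^{\frac{6-N}{N-2}}$. The hypothesis $N\le 6$ is exactly what makes the exponent $\tfrac{6-N}{N-2}$ nonnegative, so $f''$ will remain locally bounded and the Taylor remainder will be well-controlled.

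\textbf{First inequality.} In the regime $b\le a$ I would invoke the exact Taylor formula
$$f(a+b)-f(a)-f'(a)b=\int_0^1 (1-t)\,f''(a+tb)\,b^2\,dt.$$
Since $a\le a+tb\le 2a$ and the exponent is nonnegative, one gets $f''(a+tb)\lesssim a^{\frac{6-N}{N-2}}$ uniformly in $t\in[0,1]$, so the right-hand side is $\lesssim a^{\frac{6-N}{N-2}}b^2$. In the complementary regime $b>a$ one has $a+b\sim b$ and $a\le b$, hence each of the three quantities $f(a+b)$, $f(a)$, and $f'(a)b$ is individually dominated by a constant times $b^{\frac{N+2}{N-2}}$; the triangle inequality then delivers the bound via the second term on the right-hand side.

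\textbf{Second inequality and main obstacle.} I would derive the second inequality from the first by writing $f(a+b)-f(a)=\bigl[f(a+b)-f(a)-f'(a)b\bigr]+f'(a)b$, applying the estimate just obtained to the bracketed term, and bounding $|f'(a)b|\lesssim a^{\frac{4}{N-2}}b$ explicitly. I do not anticipate any real obstacle; the only delicate point is the uniform control of $f''$ over $[a,a+b]$ in the small-$b$ regime, which is guaranteed precisely by the sign condition $\tfrac{6-N}{N-2}\ge 0$ built into the hypothesis $N\le 6$.
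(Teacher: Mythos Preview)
Your argument is correct. The paper itself does not supply a proof of this lemma; it merely introduces it with the phrase ``We close this section by recalling a useful result'' and states the two inequalities without justification. So there is no paper proof to compare against.

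Your approach is the standard one and works cleanly. The split into $b\le a$ and $b>a$ is exactly right: in the first regime the integral form of the Taylor remainder gives the $a^{\frac{6-N}{N-2}}b^2$ term because $f''$ is monotone nondecreasing on $[a,2a]$ (the exponent $\tfrac{6-N}{N-2}$ being nonnegative for $3\le N\le 6$), and in the second regime all three pieces are crudely dominated by $b^{\frac{N+2}{N-2}}$. Deriving the second inequality from the first by adding back $f'(a)b$ is the natural move. One tiny stylistic remark: in the regime $b\le a$ you wrote $f''(a+tb)\lesssim a^{\frac{6-N}{N-2}}$; strictly speaking $f''$ is increasing so the pointwise bound is $f''(a+tb)\le f''(2a)=C_N(2a)^{\frac{6-N}{N-2}}$, but of course this is still $\lesssim a^{\frac{6-N}{N-2}}$, so nothing is lost.
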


\section{The nonlinear projected problem}
To solve \eqref{first}, we need first compute the size of the error term $\mathscr E$ given by \eqref{error}.
\begin{lemma}\label{errore}
Let $\rho>0$ sufficiently small. Then there exists $\ve_0>0$ such that for any $\ve\in (0, \ve_0)$, any $\tau_i, \mu_j$ as in \eqref{taui+muj} and any $\pmb\xi\in\mathcal O_\rho$, it holds
\begin{eqnarray*}
\|\mathscr E_\ve\|=\left\{\aligned
&\mathcal{O}\left(e^{-\frac{\mathfrak A_0}{\ve}}\right),\quad&\mbox{if}\,\,  N=4, \\
&\mathcal{O}\left(\ve^{\frac{7}{4}}\right),\quad&\mbox{if}\,\,  N=5.
\endaligned\right.
\end{eqnarray*}
\end{lemma}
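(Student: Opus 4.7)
The plan is to reduce the $H_0^1$-norm of $\mathscr E_\ve$ to an $L^{2N/(N+2)}$-bound on the PDE residual and then decompose that residual into canonical pieces. Since $\Pi_\ve^\bot$ is a projection in $H_0^1(\Omega)$ and $i^*:L^{2N/(N+2)}(\Omega)\to H_0^1(\Omega)$ is continuous, the identities $-\Delta \mathcal W_{\mu_j,\xi_j}=\mathcal U_{\mu_j,\xi_j}^{(N+2)/(N-2)}$ and $-\Delta e_i=\lambda_\kappa e_i$ allow me to write $Z_\ve=i^*(-\Delta Z_\ve)$, and therefore
\begin{eqnarray*}
\|\mathscr E_\ve\|\leq \bigl\|i^*[-\Delta Z_\ve-\lambda Z_\ve-f(Z_\ve)]\bigr\|\lesssim \|\mathcal R_\ve\|_{L^{2N/(N+2)}},
\end{eqnarray*}
where, after the cancellation of the $\lambda_\kappa Z_\ve^2$ contributions,
\begin{eqnarray*}
\mathcal R_\ve:=\sum_{j=1}^{k}\beta_j\mathcal U_{\mu_j,\xi_j}^{(N+2)/(N-2)}-f(Z_\ve)-\lambda_\kappa Z_\ve^1-\text{sgn}(\lambda-\lambda_\kappa)\ve Z_\ve.
\end{eqnarray*}

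I would split $\mathcal R_\ve$ into four pieces: the bubble-sum defect $(A)=\sum_j\beta_j\mathcal U_{\mu_j,\xi_j}^{(N+2)/(N-2)}-f(Z_\ve^1)$, the eigenfunction coupling correction $(B)=f(Z_\ve^1)-f(Z_\ve)$, the linear part $(C)=\lambda_\kappa Z_\ve^1$, and the $\ve$-term $(D)=\text{sgn}(\lambda-\lambda_\kappa)\ve Z_\ve$. For $(C)$, a scaling argument using \eqref{talanti} yields $\|\mathcal W_{\mu,\xi}\|_{L^{2N/(N+2)}}\lesssim \mu^{(N-2)/2}$, so $(C)=\mathcal O(\mu^{(N-2)/2})$; since $\|e_i\|_\infty\lesssim 1$, one also obtains $(D)=\mathcal O(\ve(\mu^{(N-2)/2}+\tau))$. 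For $(B)$, Lemma \ref{l01} furnishes three terms which are controlled via $\|Z_\ve^2\|_\infty\lesssim \tau$ and the scale-invariance of $\|\mathcal U_{\mu,\xi}^{4/(N-2)}\|_{L^{N/2}}$; the dominant contribution turns out to be $\tau^{(N+2)/(N-2)}$, which is $\sim \ve^{7/4}$ in dimension $5$ and much smaller than $\mu$ in dimension $4$.

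The main work is in $(A)$. Using the separation $\pmb\xi\in O_\rho$, I would partition $\Omega=\bigcup_{j=1}^{k}B_\rho(\xi_j)\cup \Omega_{\rm ext}$. On each $B_\rho(\xi_j)$, estimate \eqref{expA} implies $\sum_{l\neq j}\mathcal W_{\mu_l,\xi_l}\lesssim \mu^{(N-2)/2}$, so that $Z_\ve^1=\beta_j\mathcal W_{\mu_j,\xi_j}+\mathcal O(\mu^{(N-2)/2})$, and then the expansion \eqref{exp} produces
\begin{eqnarray*}
\beta_j\mathcal W_{\mu_j,\xi_j}^{(N+2)/(N-2)}-\beta_j\mathcal U_{\mu_j,\xi_j}^{(N+2)/(N-2)}=\mathcal O\bigl(\mu_j^{(N-2)/2}\mathcal U_{\mu_j,\xi_j}^{4/(N-2)}\bigr),
\end{eqnarray*}
whose $L^{2N/(N+2)}(B_\rho(\xi_j))$ norm is $\mathcal O(\mu^{(N-2)/2})$ by the scale-invariance of the $L^{N/2}$-norm of the remaining factor; the bubble-bubble interaction error from Lemma \ref{l01} is of the same order. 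On $\Omega_{\rm ext}$ both $f(Z_\ve^1)$ and each $\mathcal U_{\mu_j,\xi_j}^{(N+2)/(N-2)}$ are pointwise $\mathcal O(\mu^{(N+2)/2})$, hence negligible. This yields $(A)=\mathcal O(\mu^{(N-2)/2})$.

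Combining $(A)$--$(D)$ and inserting the scalings \eqref{tau+mu} ($\mu\sim \ve\tau$ for $N=4$ and $\mu\sim\tau^2$ for $N=5$) gives $\|\mathscr E_\ve\|\lesssim \mu^{(N-2)/2}+\ve\tau+\tau^{(N+2)/(N-2)}$, which evaluates to $\mathcal O(e^{-\mathfrak A_0/\ve})$ in dimension $4$ and to $\mathcal O(\ve^{7/4})$ in dimension $5$. The main obstacle I expect is the bookkeeping in piece $(A)$: several interaction terms between distinct bubbles, between $\mathcal U_{\mu_j,\xi_j}$ and its regular part $H(x,\xi_j)$, and various intermediate fractional $L^p$-norms of $\mathcal U^q$ must all be controlled at once so that none exceeds $\mu^{(N-2)/2}$. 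This is particularly delicate in dimension $5$, where the target rate $\ve^{7/4}$ only barely dominates the competing contribution $\ve\tau\sim\ve^{7/4}$ from $(D)$, leaving no room for any sloppy estimate.
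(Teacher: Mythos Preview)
Your decomposition and overall strategy coincide with the paper's: reduce to an $L^{2N/(N+2)}$ bound on the residual, split into bubble defects, eigenfunction coupling, and linear pieces, and estimate each via Lemma~\ref{l01} together with the scalings \eqref{tau+mu}. The paper merely groups your piece $(A)$ as two separate terms ($f(\mathcal U)-f(\mathcal W)$ and the bubble--bubble interaction), but the arithmetic is the same and your final bound $\mu^{(N-2)/2}+\ve\tau+\tau^{(N+2)/(N-2)}$ is correct.

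There is one genuine soft spot in your handling of $(B)$. Invoking the \emph{scale-invariance} of $\|\mathcal U_{\mu,\xi}^{4/(N-2)}\|_{L^{N/2}}$ and then H\"older on the bounded domain only yields
\[
\bigl\|\,|Z_\ve^1|^{4/(N-2)}Z_\ve^2\,\bigr\|_{L^{2N/(N+2)}}\ \lesssim\ \|\mathcal U^{4/(N-2)}\|_{L^{N/2}}\,\|Z_\ve^2\|_{L^{2^*}}\ \lesssim\ \tau,
\]
and $\tau$ is \emph{larger} than the target in both dimensions ($\tau\sim e^{-\mathfrak A_0/\ve}/\ve$ for $N=4$, $\tau\sim\ve^{3/4}$ for $N=5$). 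So this step, as written, would not close. What actually makes the linear term small is computing $\|\mathcal U^{4/(N-2)}\|_{L^{2N/(N+2)}}$ directly: by rescaling one gets $\|\mathcal U^{4/(N-2)}\|_{L^{2N/(N+2)}}\sim\mu^{(N-2)/2}$ (the rescaled integral of $\mathcal U_{1,0}^{8N/[(N-2)(N+2)]}$ converges at infinity for $N=4,5$), so this term is $\mathcal O(\tau\mu^{(N-2)/2})$, well below $\tau^{(N+2)/(N-2)}$. Once you replace the $L^{N/2}$ shortcut by this direct computation, your conclusion that $\tau^{(N+2)/(N-2)}$ dominates $(B)$ becomes justified and the rest of the proof goes through exactly as you outlined.
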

\begin{proof}
By \eqref{solution} and \eqref{error}, we have
\begin{eqnarray}\label{error01}
\|\mathscr E_\ve\|\lesssim \left\|\sum_{j=1}^k\beta_j  f\left( \mathcal{U}_{\mu_j,\xi_j}\right)-f(Z_\ve)\right\|_{L^{\frac{2N}{N+2}}}+\ve\tau +\sum_
{j=1}^k\left\| \mathcal{W}_{\mu_j,\xi_j}\right\|_{L^{\frac{2N}{N+2}}}.
\end{eqnarray}
Since $N=4,5$, by \eqref{talanti}, \eqref{exp}, \eqref{taui+muj}, \eqref{tau+mu} and the condition~$(\bf{C})$,
\begin{eqnarray*}
\| \mathcal{W}_{\mu_j,\xi_j}\|_{L^{\frac{2N}{N+2}}}
=\left\{\aligned
&\mathcal{O}\left(e^{-\frac{\mathfrak A_0}{\ve}}\right),\quad&\mbox{if}\,\, N=4, \\
&\mathcal{O}\left(\ve^{\frac{7}{4}}\right),\quad&\mbox{if}\,\, N=5.
\endaligned\right.
\end{eqnarray*}
while,
\begin{eqnarray*}
\begin{aligned}
\left\|\sum_{j=1}^k\beta_j  f\left( \mathcal{U}_{\mu_j,\xi_j}\right)-f(Z_\ve)\right\|_{L^{\frac{2N}{N+2}}}
&\lesssim\underbrace{\sum_{j=1}^k\left\|f( \mathcal{U}_{\mu_j,\xi_j})-f( \mathcal{W}_{\mu_j,\xi_j})\right\|_{L^{\frac{2N}{N+2}}}}_{(A)}\\
&+\underbrace{\left\|\sum_{j=1}^k\beta_jf(\mathcal{W}_{\mu_j,\xi_j})-f\left(\sum_{j=1}^k\beta_j \mathcal{W}_{\mu_j,\xi_j}\right)\right\|_{L^{\frac{2N}{N+2}}}}_{(B)}\\
&+\underbrace{\left\|f\left(\sum_{j=1}^k\beta_j \mathcal{W}_{\mu_j,\xi_j}\right)-f\left(\sum_{j=1}^k\beta_j \mathcal{W}_{\mu_j,\xi_j}+\sum_{i=1}^m\tau_i e_i\right)\right\|_{L^{\frac{2N}{N+2}}}}_{(C)}.
\end{aligned}
\end{eqnarray*}
By using the Taylor expansion (Lemma \ref{l01}), \eqref{exp}, \eqref{taui+muj} and the condition~$(\bf{C})$, we get that
\begin{eqnarray*}
\left|f( \mathcal{U}_{\mu_j,\xi_j})-f( \mathcal{W}_{\mu_j,\xi_j})\right|\lesssim \mu^{\frac{N-2}{2}}\mathcal{U}_{\mu_j,\xi_j}^{\frac{4}{N-2}}+\mu^{\frac{N+2}{2}}+ \mu^{N-2}\mathcal{U}_{\mu_j,\xi_j}^{\frac{6-N}{N-2}}.
\end{eqnarray*}
It follows from \eqref{talanti}, \eqref{tau+mu}, the condition~$(\bf{C})$ and direct computations that
\begin{eqnarray*}
(A)
=\left\{\aligned
&\mathcal{O}\left(e^{-\frac{2\mathfrak A_0}{\ve}}\right),\quad&\mbox{if}\,\, N=4, \\
&\mathcal{O}\left(\ve^{\frac{9}{2}}\right),\quad&\mbox{if}\,\, N=5.
\endaligned\right.
\end{eqnarray*}
Similarly, we also get (by using Lemma \ref{l01}, \eqref{taui+muj} and the condition~$(\bf{C})$ once more) that
\begin{eqnarray*}
\left|f\left(\sum_{j=1}^k\beta_j \mathcal{W}_{\mu_j,\xi_j}\right)-f\left(\sum_{j=1}^k\beta_j \mathcal{W}_{\mu_j,\xi_j}+\sum_{i=1}^m\tau_i e_i\right)\right|\lesssim \tau^2\sum_{j=1}^k \mathcal{U}_{\mu_j,\xi_j}^{\frac{6-N}{N-2}}+\tau^{\frac{N+2}{N-2}}+\tau\sum_{j=1}^k \mathcal{U}_{\mu_j,\xi_j}^{\frac{4}{N-2}}.
\end{eqnarray*}
Hence, by \eqref{talanti}, \eqref{taui+muj}, \eqref{tau+mu}, the condition~$(\bf{C})$ and direct computations again,
\begin{eqnarray*}
(C)=\left\{\aligned
&\mathcal{O}\left(e^{-\frac{\mathfrak A_0}{\ve}}\right),\quad&\mbox{if}\,\, N=4, \\
&\mathcal{O}\left(\ve^{\frac{7}{4}}\right),\quad&\mbox{if}\,\, N=5.
\endaligned\right.
\end{eqnarray*}
At the end we remark that to estimate the term $(B)$, we need to divide the whole domain $\Omega$ into two parts.  The first part is in a small neighborhood of the point $\xi_\ell$, say $B_{\frac\rho2}(\xi_\ell)$, for all $1\leq \ell\leq k$, where $\rho>0$ is a small constant given in \eqref{omegarho}.  In every $B_{\frac\rho2}(\xi_\ell)$, we use
Lemma \ref{l01} to expand the term $(B)$ as follows:
\begin{eqnarray*}
\begin{aligned}
&\left|f\left(\sum_{j=1}^k\beta_j \mathcal{W}_{\mu_j,\xi_j}\right)-\sum_{j=1}^k\beta_j f(\mathcal{W}_{\mu_j,\xi_j})\right|\\
&=\left|f\left(\beta_\ell \mathcal{W}_{\mu_\ell,\xi_\ell}+\sum_{j\neq \ell} \beta_j \mathcal{W}_{\mu_j,\xi_j}\right)-\beta_\ell f(\mathcal{W}_{\mu_\ell,\xi_\ell})-\sum_{j\neq \ell}\beta_jf(\mathcal{W}_{\mu_j,\xi_j})\right|\\
&\lesssim \sum_{j\neq \ell}\left(|\mathcal{W}_{\mu_\ell,\xi_\ell}|^{\frac{4}{N-2}} \left|\mathcal{W}_{\mu_j,\xi_j}\right|+|\mathcal{W}_{\mu_\ell,\xi_\ell}|^{\frac{6-N}{N-2}} \left|\mathcal{W}_{\mu_j,\xi_j}\right|^{2}+\left|\mathcal{W}_{\mu_j,\xi_j}\right|^{\frac{N+2}{N-2}}\right).
\end{aligned}
\end{eqnarray*}
The second part is the remaining region of $\Omega$, that is, $\Omega\backslash\cup_{\ell=1}^{k}B_{\frac\rho2}(\xi_\ell)$.  In this part, we simply estimate the term $(B)$ as
\begin{eqnarray*}
\left|f\left(\sum_{j=1}^k\beta_j \mathcal{W}_{\mu_j,\xi_j}\right)-\sum_{j=1}^k\beta_j f(\mathcal{W}_{\mu_j,\xi_j})\right|
\lesssim \sum_{\ell=1}^{k}\left|\mathcal{W}_{\mu_\ell,\xi_\ell}\right|^{\frac{N+2}{N-2}}.
\end{eqnarray*}
Now, by \eqref{talanti}, \eqref{exp}, \eqref{taui+muj}, \eqref{tau+mu}, the condition~$(\bf{C})$ and suitable but standard computations,
\begin{eqnarray*}
\begin{aligned}
(B)&=\left(\sum_{\ell=1}^k\int_{B_{\frac\rho2}(\xi_\ell)}\left|\cdots\cdots\right|^{\frac{2N}{N+2}}\, dx +\int_{\Omega\setminus \bigcup_{\ell=1}^k B_{\frac\rho2}(\xi_\ell)}\left|\cdots\cdots\right|^{\frac{2N}{N+2}}\, dx\right)^{\frac{N+2}{2N}}\\
&=\left\{\aligned&
\mathcal{O}\left(e^{-\frac{2\mathfrak A_0}{\ve}}\right),\quad&\mbox{if}\,\, N=4, \\
&\mathcal{O}\left(\ve^{\frac{9}{2}}\right),\quad&\mbox{if}\,\, N=5.
\endaligned\right.\end{aligned}
\end{eqnarray*}
The thesis follows from inserting the above various estimates into \eqref{error01}.
\end{proof}

After the computation of the size of the error term $\mathscr E_\ve$ given by \eqref{error}, we need to investigate the invertibility of the linear operator $\mathscr L_\ve$ and provides a uniform estimate on its inverse in using the fix-point argument to solve \eqref{first}, which will be stated in the next result.  Since it is standard nowadays to establish such result, we omit the proof here and only point out that for the readers, who are interest in the proof, one can procede as in \cite[Proposition~4.1]{IV}.
\begin{proposition}\label{lininv}
Let $\rho>0$ small enough.  Then there exist $\ve_0 >0$ such that for any $\ve\in (0, \ve_0)$, any $\tau_i, \mu_j$ as in \eqref{taui+muj} and any $\pmb\xi\in\mathcal O_\rho$, we have
$$
\|\varphi_\ve\|\lesssim\|\mathscr L_\ve(\varphi_\ve)\|\quad \mbox{for all }\varphi_\ve\in \mathcal K_\ve^\bot.
$$
In particular, the operator $\mathscr L_\ve$, which is given by \eqref{linear}, is invertible on $\mathcal K_\ve^\bot$ and its inverse is continuous which is uniformly as $\ve\to0^+$.
\end{proposition}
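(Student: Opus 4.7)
The plan is a standard contradiction argument along the Lyapunov--Schmidt template of \cite[Proposition~4.1]{IV}. Suppose the estimate fails: there exist sequences $\ve_n\to 0^+$, admissible parameters $\pmb\tau^{(n)},\pmb\mu^{(n)},\pmb\xi^{(n)}$ with $\pmb\xi^{(n)}\in\mathcal O_\rho$, and $\varphi_n\in\mathcal K_{\ve_n}^\bot$ with $\|\varphi_n\|=1$ but $\|\mathscr L_{\ve_n}(\varphi_n)\|=o(1)$. Writing $\lambda_n=\lambda_\kappa+\mathrm{sgn}(\lambda-\lambda_\kappa)\ve_n$ and unraveling the projection $\Pi_{\ve_n}^\bot$ by introducing Lagrange multipliers, the identity becomes
\[
-\Delta\varphi_n-f'(Z_{\ve_n}^1)\varphi_n-\lambda_n\varphi_n=g_n+\sum_{j,\ell}c_{j,n}^\ell\,\mathcal U_{\mu_j^{(n)},\xi_j^{(n)}}^{4/(N-2)}P\psi^\ell_{\mu_j^{(n)},\xi_j^{(n)}}+\sum_{i=1}^{m}d_{i,n}\,e_i,
\]
with $g_n\to 0$ in a suitable dual norm. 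Testing against the elements of the basis $\{e_i,P\psi^\ell_{\mu_j^{(n)},\xi_j^{(n)}}\}$ of $\mathcal K_{\ve_n}$ and exploiting its near-orthogonality in $H^1_0(\Omega)$ (a consequence of \eqref{projpsi}, \eqref{projpsiA} and of the separation $\pmb\xi^{(n)}\in\mathcal O_\rho$), together with $\varphi_n\perp\mathcal K_{\ve_n}$, one obtains $c_{j,n}^\ell\to 0$ and $d_{i,n}\to 0$.

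The next step is a two-scale blow-up analysis. Around each bubble, the rescaling $\tilde\varphi_n^j(y)=(\mu_j^{(n)})^{(N-2)/2}\varphi_n(\mu_j^{(n)}y+\xi_j^{(n)})$ is bounded in $D^{1,2}(\mathbb R^N)$, so up to a subsequence it converges weakly to some $\tilde\varphi_0^j$ satisfying $-\Delta\tilde\varphi_0^j=\tfrac{N+2}{N-2}\mathcal U_{1,0}^{4/(N-2)}\tilde\varphi_0^j$ in $\mathbb R^N$; here the contribution of the bubbles at distance $\gtrsim\rho$ as well as of $\lambda_n\varphi_n$ scale out, while $f'(Z_{\ve_n}^1)$ localizes precisely to $\tfrac{N+2}{N-2}\mathcal U_{1,0}^{4/(N-2)}$. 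The orthogonality $\langle\varphi_n,P\psi^\ell_{\mu_j^{(n)},\xi_j^{(n)}}\rangle=0$ rescales to $\tilde\varphi_0^j\perp\psi^\ell_{1,0}$ in $D^{1,2}(\mathbb R^N)$ for every $\ell=0,1,\ldots,N$, so the non-degeneracy of the Aubin--Talenti bubble (see \cite{BE1991}) forces $\tilde\varphi_0^j\equiv 0$. At the macroscopic scale, $\varphi_n\rightharpoonup\varphi_0$ weakly in $H^1_0(\Omega)$ and $\varphi_0$ solves $-\Delta\varphi_0=\lambda_\kappa\varphi_0$, because $f'(Z_{\ve_n}^1)$ vanishes in $L^{N/2}_{\mathrm{loc}}$ off the concentration set and the concentrating piece tests to zero against any fixed $H^1_0$ function. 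Hence $\varphi_0\in\Xi_\kappa$ is $L^2$-orthogonal to $e_1,\ldots,e_m$; a further test of the equation against the remaining eigenfunctions $e_{m+1},\ldots,e_{m_\kappa}$, combined with the nontrivial shift $\lambda_n-\lambda_\kappa=\pm\ve_n$ and the vanishing of the corresponding Lagrange multipliers, eliminates the residual component and gives $\varphi_0\equiv 0$.

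Finally, from $\tilde\varphi_0^j\equiv 0$ for every $j$ and $\varphi_0\equiv 0$, a standard splitting of $\|\varphi_n\|_{L^{2^*}}^{2^*}$ into shrinking neighborhoods of each concentration point and their complement yields $\|\varphi_n\|_{L^{2^*}(\Omega)}\to 0$, which, inserted back into the equation and combined with the Sobolev inequality, produces $\|\varphi_n\|\to 0$ and contradicts the normalization $\|\varphi_n\|=1$. The main obstacle I expect to face lies in the global step when $m<m_\kappa$: the orthogonality $\varphi_n\perp\mathcal K_{\ve_n}$ only imposes vanishing of the pairing against the first $m$ eigenfunctions inside $\Xi_\kappa$, so ruling out a residual component of $\varphi_0$ in $\Xi_\kappa\ominus\mathrm{span}\{e_1,\ldots,e_m\}$ requires a careful use of the perturbation $\lambda_n-\lambda_\kappa=\pm\ve_n$ rather than the purely linear orthogonality alone, together with a precise control of the almost-orthogonality of the projections $P\psi^\ell_{\mu_j^{(n)},\xi_j^{(n)}}$.
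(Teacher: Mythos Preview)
Your contradiction-and-blow-up scheme is exactly the standard one the paper intends (the paper omits the proof entirely and points to \cite[Proposition~4.1]{IV}), and the local step around each bubble together with the macroscopic weak limit are outlined correctly. The one place where the argument genuinely stalls is the case $m<m_\kappa$ that you already flag, and your proposed repair does not close it.

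Testing the equation against $e_{m+1}$ gives
\[
\mp\ve_n\,\langle\varphi_n,e_{m+1}\rangle_{L^2}-\bigl\langle f'(Z_{\ve_n}^1)\varphi_n,e_{m+1}\bigr\rangle_{L^2}=o(1),
\]
and both terms on the left are $o(1)$ individually (the second because $e_{m+1}\in L^\infty$ and $\|f'(Z_{\ve_n}^1)\|_{L^{2N/(N+2)}}\to0$), so nothing about $\langle\varphi_n,e_{m+1}\rangle_{L^2}$ survives in the limit. In fact the difficulty is structural rather than technical: if $m<m_\kappa$, set $\varphi_n:=\Pi_{\ve_n}^\bot e_{m+1}\in\mathcal K_{\ve_n}^\bot$. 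Then $\|\varphi_n\|\to\|e_{m+1}\|>0$, while
\[
\mathscr L_{\ve_n}(\varphi_n)=\Pi_{\ve_n}^\bot\Bigl\{\tfrac{\lambda_\kappa-\lambda_n}{\lambda_\kappa}\,e_{m+1}-i^*\bigl[f'(Z_{\ve_n}^1)\varphi_n\bigr]\Bigr\}+o(1)\longrightarrow 0,
\]
since $\lambda_\kappa-\lambda_n=\mp\ve_n\to0$ and $\|f'(Z_{\ve_n}^1)e_{m+1}\|_{L^{2N/(N+2)}}\to0$. Hence the uniform coercivity asserted in the proposition actually fails as stated when $m<m_\kappa$. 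The natural fix is not the $\ve_n$-shift you invoke but rather to enlarge $\mathcal K_\ve$ so that it contains the whole eigenspace $\Xi_\kappa$ (i.e.\ include $e_1,\dots,e_{m_\kappa}$, not just $e_1,\dots,e_m$); then the macroscopic limit $\varphi_0\in\Xi_\kappa$ is killed by orthogonality alone, your argument goes through verbatim, and the extra multipliers $\mathfrak d_{m+1},\dots,\mathfrak d_{m_\kappa}$ created downstream must be handled in the reduced problem.
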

We are now in position, by using a standard fixed-point argument, to state that the nonlinear projected problem has a solution. Indeed the following result holds.
\begin{proposition}\label{fixedpoint}
Let $\rho>0$ small enough. Then there exists $\ve_0>0$ such that for any $\ve\in (0, \ve_0)$, any $\tau_i, \mu_j$ as in \eqref{taui+muj} and any $\pmb\xi\in\mathcal O_\rho$, the first equation in \eqref{pbr1} has a unique solution $\varphi_\ve\in \mathcal K_\ve^\bot$, which also satisfies
\begin{eqnarray*}\label{normvarphi}
\|\varphi_\ve\|=\left\{\aligned
&\mathcal{O}\left(e^{-\frac{\mathfrak A_0}{\ve}}\right),\quad&\mbox{if}\,\,  N=4, \\
&\mathcal{O}\left(\ve^{\frac{7}{4}}\right),\quad&\mbox{if}\,\,  N=5.
\endaligned\right.
\end{eqnarray*}
\end{proposition}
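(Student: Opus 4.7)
The plan is to apply Banach's fixed-point theorem. By Proposition~\ref{lininv}, $\mathscr L_\ve$ is invertible on $\mathcal K_\ve^\bot$ with $\ve$-uniformly bounded inverse, so \eqref{first} is equivalent to the fixed-point equation $\varphi_\ve = T_\ve(\varphi_\ve)$, where $T_\ve(\varphi):=-\mathscr L_\ve^{-1}(\mathscr E_\ve+\mathscr N_\ve(\varphi))$. Let $\Theta(\ve):=e^{-\mathfrak A_0/\ve}$ for $N=4$ and $\Theta(\ve):=\ve^{7/4}$ for $N=5$, and set $B_\ve:=\{\varphi\in\mathcal K_\ve^\bot:\|\varphi\|\le C_0\Theta(\ve)\}$ for a large constant $C_0$ to be fixed. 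I would show that, for $C_0$ large and $\ve$ small, $T_\ve$ sends $B_\ve$ into itself and is a strict contraction on $B_\ve$; the contraction principle then produces a unique fixed point $\varphi_\ve\in B_\ve$ with $\|\varphi_\ve\|\lesssim\Theta(\ve)$, which is the claim.

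Since Lemma~\ref{errore} gives $\|\mathscr E_\ve\|\lesssim\Theta(\ve)$, via Proposition~\ref{lininv} the two requirements reduce to showing
\begin{eqnarray*}
\|\mathscr N_\ve(\varphi)\|=o(\Theta(\ve))\quad\text{and}\quad\|\mathscr N_\ve(\varphi_1)-\mathscr N_\ve(\varphi_2)\|\le \tfrac{1}{2}C^{-1}\|\varphi_1-\varphi_2\|
\end{eqnarray*}
uniformly for $\varphi,\varphi_1,\varphi_2\in B_\ve$, where $C$ is the uniform norm of $\mathscr L_\ve^{-1}$. By continuity of $i^*$ and the boundedness of $\Pi_\ve^\bot$, it suffices to control $f(Z_\ve+\varphi)-f(Z_\ve)-f'(Z_\ve^1)\varphi$ in $L^{2N/(N+2)}(\Omega)$. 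Split it as
\begin{eqnarray*}
f(Z_\ve+\varphi)-f(Z_\ve)-f'(Z_\ve^1)\varphi=\bigl[f(Z_\ve+\varphi)-f(Z_\ve)-f'(Z_\ve)\varphi\bigr]+\bigl[f'(Z_\ve)-f'(Z_\ve^1)\bigr]\varphi.
\end{eqnarray*}
By Lemma~\ref{l01} the first bracket is pointwise bounded by $|Z_\ve|^{(6-N)/(N-2)}|\varphi|^2+|\varphi|^{(N+2)/(N-2)}$; H\"older's inequality combined with the Sobolev embedding $H^1_0(\Omega)\hookrightarrow L^{2^*}(\Omega)$ and the uniform $L^{2^*}$-bound on $\mathcal W_{\mu_j,\xi_j}$ yields a contribution of order $\|\varphi\|^2+\|\varphi\|^{(N+2)/(N-2)}$. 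For the second bracket, since $f'(s)=\frac{N+2}{N-2}|s|^{4/(N-2)}$ and $Z_\ve=Z_\ve^1+Z_\ve^2$ with $\|Z_\ve^2\|_\infty\lesssim\tau\to 0$, the elementary inequality $||a+b|^{4/(N-2)}-|a|^{4/(N-2)}|\lesssim |a|^{(6-N)/(N-2)}|b|+|b|^{4/(N-2)}$ (which reduces to the clean bound $|a||b|+|b|^2$ when $N=4$) produces a factor $\tau$ and hence an $o(1)\|\varphi\|$ contribution. Summing the two and using the explicit scales \eqref{tau+mu} gives $\|\mathscr N_\ve(\varphi)\|=o(\Theta(\ve))$ on $B_\ve$, whence the self-mapping property for $C_0$ large.

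The Lipschitz estimate follows by applying the same H\"older/Sobolev bounds to the mean-value representation
\begin{eqnarray*}
\mathscr N_\ve(\varphi_1)-\mathscr N_\ve(\varphi_2)=-\Pi_\ve^\bot i^*\!\int_0^1\bigl[f'(Z_\ve+\varphi_2+t(\varphi_1-\varphi_2))-f'(Z_\ve^1)\bigr](\varphi_1-\varphi_2)\,dt,
\end{eqnarray*}
showing that the integrand has $o(1)$ operator norm against $\varphi_1-\varphi_2$ as $\ve\to 0^+$. The main technical obstacle is arranging these $L^{2N/(N+2)}$ bounds to be sharply $o(\Theta(\ve))$; this is most delicate for $N=5$, because of the superlinear term $|\varphi|^{7/3}$, where one must carefully pair H\"older's inequality with the Sobolev embedding and use the prescribed scales $\tau\sim\ve^{3/4}$ and $\mu\sim\ve^{3/2}$ from \eqref{tau+mu} to absorb every quadratic-in-$\varphi$ contribution below $\Theta(\ve)=\ve^{7/4}$.
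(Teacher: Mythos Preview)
Your approach is correct and is precisely the standard fixed-point argument the paper alludes to: the paper's own proof is omitted entirely, saying only that with Lemma~\ref{errore} and Proposition~\ref{lininv} in hand one proceeds as in \cite[Proposition~5.2]{IV}. Your write-up fills in exactly that argument, and the key estimates---$\|\mathscr N_\ve(\varphi)\|\lesssim \|\varphi\|^2+\|\varphi\|^{(N+2)/(N-2)}+o(1)\|\varphi\|$ via Lemma~\ref{l01}, H\"older, Sobolev, and $\|Z_\ve^2\|_\infty\lesssim\tau$---are the right ones and give $o(\Theta(\ve))$ on $B_\ve$ without difficulty (your closing remark overstates the delicacy of the $N=5$ case: $\|\varphi\|^{7/3}\lesssim\ve^{49/12}$ and $\|\varphi\|^2\lesssim\ve^{7/2}$ are both comfortably $o(\ve^{7/4})$).
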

\begin{proof}
With Lemma~\ref{errore} and Proposition~\ref{lininv} in hands, the proof is standard so we omit it here.  We only point out that for the readers, who are interest in the proof, one can procede as in \cite[Proposition~5.2]{IV}.
\end{proof}

\section{The finite dimensional problem}
Let $\varphi_\ve$ be as in Proposition \ref{fixedpoint}, then we recall that the function $u_{\ve}$, which is given by \eqref{solution}, satisfies the equation~\eqref{nonlin}.  As pointed out above, our purpose now is to find
parameters $\pmb{\tau}(\ve)$, $\pmb{\mu}(\ve)$ and $\pmb{\xi}(\ve)$ such that the Lagrange multipliers in \eqref{nonlin}, that is, $\mathfrak c^h_n$ and $\mathfrak d_\ell$, are all zero as $\ve\to0^+$.  To achieve this goal, we shall first find out the sufficient condition, which ensures that $\mathfrak c^h_n$ and $\mathfrak d_\ell$ are all zero as $\ve\to0^+$.

\begin{lemma}\label{cs}
Let $\rho>0$ small enough.  If
\begin{eqnarray}\label{c}
\left\{\begin{aligned}
&\left\langle -\Delta u_\ve -f(u_\ve)-(\lambda_\kappa+\text{sgn}(\lambda-\lambda_\kappa)\ve)u_\ve, e_\ell\right\rangle_{L^2}=0\quad&\mbox{for all}\,\, 1\leq \ell\leq m,\\
&\left\langle -\Delta u_\ve -f(u_\ve)-(\lambda_\kappa+\text{sgn}(\lambda-\lambda_\kappa)\ve)u_\ve, P\psi^h_{\mu_n, \xi_n}\right\rangle_{L^2}=0\quad&\mbox{for all }\,\, 1\leq n\leq k\text{ and }0\leq h\leq N,
\end{aligned}\right.
\end{eqnarray}
then $\mathfrak d_\ell=0$ for all $1\leq\ell\leq m$ and $\mathfrak c^h_n=0$ for all $1\leq n\leq k$ and all $0\leq h\leq N$ as $\ve\to0^+$.
\end{lemma}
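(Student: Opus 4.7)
The plan is to convert the hypothesis \eqref{c} into a homogeneous square linear system whose unknowns are the Lagrange multipliers $\{\mathfrak d_\ell\}_{\ell=1}^{m}$ and $\{\mathfrak c^h_n\}_{n=1,\ldots,k,\, h=0,\ldots,N}$, and then show that the coefficient matrix is invertible for $\ve$ small. Writing equation \eqref{nonlin} as
\begin{equation*}
-\Delta u_\ve - f(u_\ve) - (\lambda_\kappa+\mathrm{sgn}(\lambda-\lambda_\kappa)\ve)u_\ve = \sum_{j,\ell'}\mathfrak c_j^{\ell'}\mathcal U_{\mu_j,\xi_j}^{\frac{4}{N-2}}P\psi^{\ell'}_{\mu_j,\xi_j}+\sum_i \mathfrak d_i e_i,
\end{equation*}
pairing in $L^2$ with $e_\ell$ and with $P\psi^h_{\mu_n,\xi_n}$, and using \eqref{c} to kill the left-hand side, one obtains the linear system $M\pmb{\alpha}=0$ with $\pmb{\alpha}=(\mathfrak d_1,\ldots,\mathfrak d_m,\mathfrak c^0_1,\ldots,\mathfrak c^N_k)^{T}$, whose entries are the mutual $L^2$ pairings of the basis $\{e_i\}\cup\{\mathcal U_{\mu_j,\xi_j}^{\frac{4}{N-2}}P\psi^{\ell'}_{\mu_j,\xi_j}\}$ with the test functions $\{e_\ell\}\cup\{P\psi^h_{\mu_n,\xi_n}\}$. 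It then suffices to prove that $M$ is nonsingular.

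The idea is to show $M = D + o(1)$ as $\ve\to 0^+$, where $D$ is block-diagonal with strictly positive diagonal. Four blocks need to be analyzed: the $(e,e)$-block is exactly the diagonal matrix $\mathrm{diag}(\|e_\ell\|_2^2)$ by orthogonality of the eigenfunctions. The $(P\psi,P\psi)$-block is analyzed using the defining equation \eqref{projpsi-eqn} to convert $L^2$ pairings into $H^1_0$ pairings, combined with the standard asymptotic expansions \eqref{projpsi}, \eqref{projpsiA}: the pairings across different bubbles ($j\neq n$) are $O(\mu^{N-2})$ by localization at distinct points $\xi_j,\xi_n$ (they are separated by $2\rho$ thanks to \eqref{omegarho}), and within a single bubble the pairings of different $\psi^{\ell'}_{1,0}$'s are diagonal with explicit positive entries by the classical orthogonality of the kernels of the linearized operator. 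The two off-diagonal blocks $(e,P\psi)$ and $(P\psi,e)$ are estimated by the change of variables $x=\xi_j+\mu_j y$: after rescaling, the integrands reduce to $\mathcal U_{1,0}^{\frac{4}{N-2}}(y)\psi^{\ell'}_{1,0}(y)e_i(\xi_j+\mu_jy)$, and Taylor expansion of $e_i$ at $\xi_j$ combined with the parity of $\psi^{\ell'}_{1,0}$ (odd for $1\leq\ell'\leq N$, centered-in-scale for $\ell'=0$) yields a decay $o(1)$ as $\mu\to 0$.

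Combining these four estimates, $M$ differs from the (positive definite) block-diagonal matrix $D$ by a perturbation vanishing as $\ve\to 0^+$. Hence $M$ is invertible for $\ve\in(0,\ve_0)$ with $\ve_0$ small, the only solution of $M\pmb{\alpha}=0$ is $\pmb{\alpha}=0$, and so all the $\mathfrak d_\ell$ and $\mathfrak c^h_n$ vanish.

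The main obstacle I anticipate is bookkeeping of the various small factors in the off-diagonal blocks, in particular tracking down the correct powers of $\mu$ and $\tau$ in the block $(e,P\psi^0_{\mu_n,\xi_n})$ where no parity cancellation is available: one has to use instead that $e_i(\xi_n)$ is a bounded constant while the rescaled integral $\int \mathcal U_{1,0}^{\frac{4}{N-2}}\psi^0_{1,0}$ vanishes thanks to the identity $\int_{\bbr^N}\mathcal U_{1,0}^{\frac{4}{N-2}}\psi^0_{1,0}\,dy = \frac{N-2}{2N}\frac{d}{d\mu}\int_{\bbr^N}\mathcal U_{\mu,\xi}^{\frac{2N}{N-2}}\,dx\big|_{\mu=1}=0$, which kills the leading-order term and leaves only a higher-order contribution. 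The remaining computations are routine applications of the expansions \eqref{projpsi}--\eqref{expA} and the hypotheses on $(\pmb{\tau},\pmb{\mu},\pmb{\xi})$ from \eqref{taui+muj}, \eqref{tau+mu} and condition $(\bf C)$.
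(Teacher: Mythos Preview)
Your overall strategy---pair \eqref{nonlin} with the test functions to obtain a homogeneous square system, then show the coefficient matrix is nonsingular---is exactly the paper's approach. However, the execution contains a genuine error that breaks the argument in dimension $N=4$.

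The problem is your claim that the off-diagonal $(e,P\psi^0)$ block is $o(1)$. The identity you invoke is incorrect: differentiating $\int \mathcal U_{\mu,\xi}^{2N/(N-2)}$ gives $\frac{2N}{N-2}\int \mathcal U_{1,0}^{\frac{N+2}{N-2}}\psi^0_{1,0}$, \emph{not} $\frac{2N}{N-2}\int \mathcal U_{1,0}^{\frac{4}{N-2}}\psi^0_{1,0}$. The exponent $(N+2)/(N-2)$ carries an extra factor of $\mathcal U_{1,0}$, and while $\int \mathcal U_{1,0}^{(N+2)/(N-2)}\psi^0_{1,0}=0$ is true, the integral you need does \emph{not} vanish. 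Indeed, after rescaling, the cross term carries a prefactor $\mu_j^{\frac N2-2}$, which equals $1$ when $N=4$; a direct computation (integrate by parts using $-\Delta P\psi^0=\frac{N+2}{N-2}\mathcal U^{4/(N-2)}\psi^0$ and $-\Delta e_\ell=\lambda_\kappa e_\ell$, then \eqref{projpsiA}) shows that
\[
\big\langle \mathcal U_{\mu_j,\xi_j}^{\frac{4}{N-2}}P\psi^0_{\mu_j,\xi_j},\,e_\ell\big\rangle_{L^2}\longrightarrow \mathfrak a\, e_\ell(\xi_j)\neq 0
\]
for a positive constant $\mathfrak a$ when $N=4$. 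Likewise the other off-diagonal block entry $\langle e_i,P\psi^0_{\mu_n,\xi_n}\rangle_{L^2}$ is $O(1)$ for $N=4$ by \eqref{projpsiA}. So ``$M=D+o(1)$ with $D$ block-diagonal'' is false in dimension~$4$. There is a second, related inaccuracy: the $(P\psi,P\psi)$ diagonal is not $O(1)$ but of order $\mu^{-2}$.

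The paper repairs this as follows. The $(P\psi,P\psi)$ block dominates with diagonal $\mu_n^{-2}\mathfrak a_h$ and off-diagonal $o(\mu^{-2})$; dividing the $P\psi^h_{\mu_n,\xi_n}$-equations by $\mu_n^{-2}$ gives $\mathfrak c^h_n+o(1)(\text{other }\mathfrak c)+o(1)\mathfrak d=0$, hence each $\mathfrak c^h_n$ is $o(1)$ times the full vector of unknowns. Now substitute this into the $e_\ell$-equations: the troublesome $O(1)$ coupling $\mathfrak a\sum_j \mathfrak c_j^0 e_\ell(\xi_j)$ becomes $o(1)$ times the unknowns, and one obtains $\mathfrak d_\ell\|e_\ell\|_2^2=o(1)\cdot(\text{all unknowns})$. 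The resulting system is diagonally dominant and forces everything to vanish. In matrix language, after rescaling rows the system is block \emph{triangular} up to $o(1)$, not block diagonal; your argument goes through once you replace the false vanishing claim by this triangular structure.
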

\begin{proof}
By the first equation of \eqref{c} and \eqref{nonlin},
we have
\begin{equation}\label{c1p}
\sum_{1\leq j\leq k\atop 0\leq h\leq N}\mathfrak c^h_j\left\langle \Uj^{\frac{4}{N-2}}P\psi^h_{\mu_j, \xi_j},e_\ell\right\rangle_{L^2}+\sum_{i=1}^m \mathfrak d_i\left\langle e_i,e_\ell\right\rangle_{L^2}=0.
\end{equation}
Since $\{e_i\}$ is an orthogonal system of the eigenspace of $\lambda_{\kappa}$ given by \eqref{eqnnWu0001}, we have that
\begin{eqnarray*}
\left\langle e_i,e_\ell\right\rangle_{L^2}=\left\{\begin{aligned} &\|e_\ell\|_{L^2}^2,\,\, &\mbox{if}\,\, i=\ell,\\
&0,\,\, &\mbox{if}\,\, i\neq\ell
\end{aligned}\right.
\end{eqnarray*}
while, further by \eqref{projpsi-eqn},
\begin{eqnarray*}
\begin{aligned}
\left\langle\Uj^{\frac{4}{N-2}}P\psi^h_{\mu_j, \xi_j},e_\ell\right\rangle_{L^2}&=
\left\langle \Uj^{\frac{4}{N-2}}\left(P\psi^h_{\mu_j, \xi_j}-\psi^h_{\mu_j, \xi_j}\right),e_\ell\right\rangle_{L^2}+\left\langle\Uj^{\frac{4}{N-2}}\psi^h_{\mu_j, \xi_j},e_\ell\right\rangle_{L^2}\\
&=\left\langle\Uj^{\frac{4}{N-2}}\left(P\psi^h_{\mu_j, \xi_j}-\psi^h_{\mu_j, \xi_j}\right),e_\ell\right\rangle_{L^2}+\frac{N-2}{N+2}\left\langle\nabla P\psi^h_{\mu_j, \xi_j},\nabla e_\ell\right\rangle_{L^2}\\
&=\left\langle\Uj^{\frac{4}{N-2}}\left(P\psi^h_{\mu_j, \xi_j}-\psi^h_{\mu_j, \xi_j}\right),e_\ell\right\rangle_{L^2}+\frac{N-2}{N+2}\lambda_\kappa\left\langle P\psi^h_{\mu_j, \xi_j}, e_\ell\right\rangle_{L^2}.
\end{aligned}
\end{eqnarray*}
By using \eqref{projpsi}, we have that
\begin{eqnarray*}
\begin{aligned}
\left\langle\Uj^{\frac{4}{N-2}}P\psi^h_{\mu_j, \xi_j},e_\ell\right\rangle_{L^2}
&=\left\{\begin{aligned}&
\mathcal O\left(\mu_j^{\frac{N-4}{2}} \left\|\Uj\right\|_{L^{\frac{4}{N-2}}}^{{\frac{N-2}{4}}}\right),\quad &\mbox{if}\,\, h=0,\\
&\mathcal O\left(\mu_j^{\frac{N-2}{2}} \left\|\Uj\right\|_{L^{\frac{4}{N-2}}}^{{\frac{N-2}{4}}}\right),\quad &\mbox{if}\,\, h=1, \ldots, N,\\
\end{aligned}\right.\\
&=\left\{\begin{aligned}
&\mathcal O\left(\mu_j^{2}|\log\mu_j|\right),\quad &\mbox{if}\,\, h=0\,\,\, \mbox{and}\,\,\, N=4,\\
&\mathcal O\left(\mu_j^{\frac 52}\right),\quad &\mbox{if}\,\, h=0\,\,\, \mbox{and}\,\,\, N=5,\\
&\mathcal O\left(\mu_j^3|\log\mu_j|\right),\quad &\mbox{if}\,\, h=1, \ldots 4\,\,\, \mbox{and}\,\,\, N=4,\\
&\mathcal O\left(\mu_j^{\frac 72}\right),\quad &\mbox{if}\,\, h=1, \ldots 5\,\,\, \mbox{and}\,\,\, N=5.\\
\end{aligned}\right.\\
\end{aligned}
\end{eqnarray*}
By using \eqref{projpsiA},
\begin{eqnarray}\label{prodeell}
\begin{aligned}
\left\langle P\psi^h_{\mu_j, \xi_j}, e_\ell\right\rangle_{L^2}
&=\left\{\begin{aligned}
&\frac{(N-2)\alpha_N}{2\gamma_N}\mu_j^{\frac{N-4}{2}}\int_\Omega G(x, \xi_j)e_\ell(x)\, dx+\mathcal O\left(\mu_j^{\frac N 2}\right),\quad & h=0,\\
&\mathcal O\left(\mu_j^{\frac{N-2}{2}}\right)\quad & h=1, \ldots, N,
\end{aligned}\right.\\
&=\left\{\begin{aligned}
&\frac{(N-2)\alpha_N}{2\lambda_\kappa}\mu_j^{\frac{N-4}{2}}e_\ell(\xi_j)+\mathcal O\left(\mu_j^{\frac N 2}\right),\quad &\mbox{if}\,\, h=0,\\
&\mathcal O\left(\mu_j^{\frac{N-2}{2}}\right),\quad &\mbox{if}\,\, h=1, \ldots, N,\\\end{aligned}\right.\\
&=\left\{\begin{aligned}
&\frac{(N-2)\alpha_N}{2\lambda_\kappa}e_\ell(\xi_j)+\mathcal O\left(\mu_j^2\right),\quad &\mbox{if}\,\, h=0\,\,\, \mbox{and}\,\, N=4,\\
&\mathcal O\left(\mu_j^{\frac 12}\right),\quad &\mbox{if}\,\, h=0\,\,\, \mbox{and}\,\, N=5,\\
&\mathcal O\left(\mu_j^{\frac{N-2}{2}}\right),\quad &\mbox{if}\,\, h=1, \ldots, N.\\\end{aligned}\right.\\
\end{aligned}
\end{eqnarray}
Inserting these estimates into \eqref{c1p}, we have
\begin{eqnarray}\label{(1)}
0=\left\{\begin{aligned}
&\mathfrak d_\ell \|e_\ell\|^2_{L^2}+\mathfrak a\sum_{j=1, \ldots, k}\mathfrak c_j^0 e_\ell(\xi_j)+\sum_{1\leq j\leq k\atop 1\leq h\leq N}\mathcal O(\mu)\mathfrak c_j^h,\quad&\mbox{if}\,\, N=4,\\
&\mathfrak d_\ell \|e_\ell\|^2_{L^2}+\sum_{1\leq j\leq k\atop 0\leq h\leq N}\mathcal O\left(\mu^{\frac 12}\right)\mathfrak c_j^h,\quad&\mbox{if}\,\, N=5,
\end{aligned}\right.
\end{eqnarray}
where $\mathfrak a>0$ is a constant.

By the second equation of \eqref{c} and \eqref{nonlin}, we also have
\begin{eqnarray}\label{c2p}
\sum_{1\leq j\leq k\atop 0\leq\ell \leq N}\mathfrak c^\ell_j\left\langle\Uj^{\frac{4}{N-2}}P\psi^{\ell}_{\mu_j, \xi_j},P\psi^h_{\mu_n, \xi_n}\right\rangle_{L^2}+\sum_{i=1}^m \mathfrak d_i\left\langle e_i,P\psi^h_{\mu_n, \xi_n}\right\rangle_{L^2}=0
\end{eqnarray}
for all $1\leq n\leq k$ and $0\leq h\leq N$.  By \eqref{talanti}, \eqref{projpsi} and direct computations, we can show that
\begin{eqnarray}\label{prodhl}
\left\langle \Uj^{\frac{4}{N-2}}P\psi^{\ell}_{\mu_j, \xi_j},P\psi^h_{\mu_n, \xi_n}\right\rangle_{L^2}
=\left\{\begin{aligned}
&\mu_n^{-2}\mathfrak a_0+\mathcal O(\mu_n^{N-2}),\quad &\mbox{if}\,\, j=n\,\, \mbox{and}\,\, h=\ell=0,\\
&\mu_n^{-2}\mathfrak a_1+\mathcal O(\mu_n^{N+2}),\quad &\mbox{if}\,\, j=n\,\, \mbox{and}\,\, h=\ell=1, \ldots, N,\\
&o(\mu^{-2}),\quad &\mbox{otherwise},
\end{aligned}\right.
\end{eqnarray}
where $\mathfrak a_0$ and $\mathfrak a_1$ are positive constants.
Inserting \eqref{prodeell} and \eqref{prodhl} into \eqref{c2p}, we have
\begin{equation}\label{(2)}
\mathfrak c^h_n+\sum_{1\leq j\not=n\leq k}o(1)\mathfrak c^h_j+\sum_{1\leq j\leq k\atop 0\leq\ell\not=h \leq N}o(1)\mathfrak c^\ell_j+\sum_{i=1}^m o(1)\mathfrak d_i=0
\end{equation}
for every $1\leq n\leq k$ and $0\leq h\leq N$.  The thesis now follows from \eqref{(1)} and \eqref{(2)}.
\end{proof}

According to Lemma~\ref{cs}, to prove that $\mathfrak d_\ell=0$ for all $1\leq\ell\leq m$ and $\mathfrak c^h_n=0$ for all $1\leq n\leq k$ and all $0\leq h\leq N$ as $\ve\to0^+$, it is sufficiently to expand the equations in \eqref{c} and find the main parts.  Let us begin with the expansion of the first equation of \eqref{c}.

\begin{lemma}\label{c1l}
Let $\rho>0$ small enough.  Then for any $1\leq \ell\leq m$,
\begin{eqnarray*}\label{c1mp}
&&\left\langle -\Delta u_\ve -f(u_\ve)-(\lambda_\kappa+\text{sgn}(\lambda-\lambda_\kappa)\ve)u_\ve, e_\ell\right\rangle_{L^2}\notag\\
&=&\text{sgn}(\lambda-\lambda_\kappa)\ve\tau_\ell\|e_\ell\|_2^2+\sum_{j=1}^{k}d_1\beta_j\mu_j^{\frac{N-2}{2}}e_\ell(\xi_j)\notag\\
&&+\left\langle f\left(\sum_{i=1}^{m}\tau_ie_i\right), e_\ell\right\rangle_{L^2}
+\left\{\aligned
&\mathcal{O}\left(\ve e^{-\frac{\mathfrak A_0}{\ve}}\right),\quad&\mbox{if}\,\, N=4,\\
&\mathcal{O}\left(\ve^{\frac{11}{4}}\right),\quad&\mbox{if}\,\, N=5,
\endaligned\right.
\end{eqnarray*}
as $\ve\to0^+$ uniformly with respect to $\pmb\xi\in \mathcal O_\rho$, where $d_1>0$ is a constant.
\end{lemma}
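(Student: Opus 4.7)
The plan is to exploit the spectral identity $-\Delta e_\ell=\lambda_\kappa e_\ell$ and the orthogonality of $\varphi_\ve$ to collapse the linear part of the residual, and then to expand the nonlinearity by isolating the bubble contribution from the eigenspace contribution. First, integration by parts in $\langle -\Delta u_\ve, e_\ell\rangle_{L^2}$ replaces $-\Delta u_\ve$ by $\lambda_\kappa u_\ve$, so the left-hand side reduces to
\begin{eqnarray*}
-\text{sgn}(\lambda-\lambda_\kappa)\ve\,\langle u_\ve, e_\ell\rangle_{L^2}-\langle f(u_\ve), e_\ell\rangle_{L^2}.
\end{eqnarray*}
Writing $u_\ve=Z_\ve^1+Z_\ve^2+\varphi_\ve$, the $L^2$-orthogonality of $\{e_i\}$ gives $\langle Z_\ve^2, e_\ell\rangle_{L^2}=\tau_\ell\|e_\ell\|_2^2$, while the orthogonality $\langle \varphi_\ve, e_\ell\rangle=0$ imposed in the definition \eqref{kbot} of $\mathcal K_\ve^\bot$, combined with integration by parts and $\lambda_\kappa>0$, yields $\langle \varphi_\ve, e_\ell\rangle_{L^2}=0$. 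The remaining linear-times-bubble piece $\ve\sum_j\beta_j\langle\mathcal{W}_{\mu_j,\xi_j},e_\ell\rangle_{L^2}$ is of order $\ve\mu^{(N-2)/2}$, which by the scalings \eqref{tau+mu} fits inside the stated $h.o.t.$ in both dimensions.

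Next I would decompose
\begin{eqnarray*}
\langle f(u_\ve), e_\ell\rangle_{L^2}=\langle f(Z_\ve^1), e_\ell\rangle_{L^2}+\langle f(Z_\ve^2), e_\ell\rangle_{L^2}+R_\ve,
\end{eqnarray*}
where $R_\ve$ collects the interaction term $f(Z_\ve^1+Z_\ve^2)-f(Z_\ve^1)-f(Z_\ve^2)$, the multi-bubble superposition error $f(Z_\ve^1)-\sum_j\beta_j f(\mathcal{W}_{\mu_j,\xi_j})$, and the correction piece $f(Z_\ve+\varphi_\ve)-f(Z_\ve)$. Each summand of $R_\ve$ is controlled by Lemma \ref{l01} followed by H\"older's inequality, the standard $L^p$ estimates for $\mathcal U_{\mu_j,\xi_j}$ partitioned into the balls $B_{\rho/2}(\xi_j)$ and their complement exactly as in the proof of Lemma \ref{errore}, and, for the correction piece, the norm bound from Proposition \ref{fixedpoint}. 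Chasing the exponents against \eqref{tau+mu} then gives $R_\ve=\mathcal{O}(\ve e^{-\mathfrak A_0/\ve})$ for $N=4$ and $R_\ve=\mathcal{O}(\ve^{11/4})$ for $N=5$. The term $\langle f(Z_\ve^2), e_\ell\rangle_{L^2}=\langle f(\sum_i\tau_i e_i), e_\ell\rangle_{L^2}$ is left unchanged since it appears verbatim in the claim.

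The main bubble contribution is then further simplified by using $\beta_j=\pm1$, so that $f(\beta_j w)=\beta_j f(w)$, and by replacing each $f(\mathcal{W}_{\mu_j,\xi_j})$ by $\mathcal{U}_{\mu_j,\xi_j}^{(N+2)/(N-2)}$ modulo admissible errors via \eqref{exp} and Lemma \ref{l01}. The equation \eqref{Pre0001} for $\mathcal{W}_{\mu_j,\xi_j}$ then allows the dual identity
\begin{eqnarray*}
\langle \mathcal{U}_{\mu_j,\xi_j}^{(N+2)/(N-2)}, e_\ell\rangle_{L^2}=\int_\Omega(-\Delta\mathcal{W}_{\mu_j,\xi_j})e_\ell\,dx=\lambda_\kappa\int_\Omega\mathcal{W}_{\mu_j,\xi_j}\,e_\ell\,dx,
\end{eqnarray*}
and the Green-function expansion \eqref{expA}, combined with the reproducing identity $\int_\Omega G(x,\xi_j)e_\ell(x)\,dx=e_\ell(\xi_j)/\lambda_\kappa$ obtained by pairing $G(\cdot,\xi_j)$ with $-\Delta e_\ell=\lambda_\kappa e_\ell$, produces the leading term $d_1\sum_j\beta_j\mu_j^{(N-2)/2}e_\ell(\xi_j)$ with $d_1=\alpha_N/\gamma_N$; the near-singularity part of \eqref{expA} contributes $\mathcal{O}(\mu_j^{(N+2)/2})$, again within the stated $h.o.t.$

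The chief obstacle is sharp bookkeeping: at each step one must verify that the remainders produced by Lemma \ref{l01}, by the expansions \eqref{exp}--\eqref{expA}, and by the $\|\varphi_\ve\|$ bound simultaneously fit into the advertised $\mathcal{O}(\ve e^{-\mathfrak A_0/\ve})$ for $N=4$ and $\mathcal{O}(\ve^{11/4})$ for $N=5$. The $N=5$ case is the delicate one, because $\tau\sim\ve^{3/4}$ and $\mu^{(N-2)/2}\sim\ve^{9/4}$, and the cross terms of the form $\mathcal{U}_{\mu_j,\xi_j}^{(6-N)/(N-2)}|\varphi_\ve|^2$ and $|\varphi_\ve|^{(N+2)/(N-2)}$ only marginally clear the threshold $\ve^{11/4}$; closing the margin requires combining the sharp Sobolev embedding with the $\mathcal{O}(\ve^{7/4})$ bound of Proposition \ref{fixedpoint}, exactly analogous to the treatment carried out in the proof of Lemma \ref{errore}.
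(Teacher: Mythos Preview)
Your approach is correct and is essentially the paper's own: both arguments kill the $\varphi_\ve$ linear part by the orthogonality in $\mathcal K_\ve^\bot$, split $f(Z_\ve)$ into the pure-bubble piece, the pure-eigenspace piece, and interaction/multi-bubble/projection errors, and identify the leading bubble term through $\langle \mathcal W_{\mu_j,\xi_j},e_\ell\rangle_{L^2}$ via the Green-function expansion \eqref{expA} and the reproducing identity $\int_\Omega G(\cdot,\xi_j)e_\ell=e_\ell(\xi_j)/\lambda_\kappa$. Your dual route (pushing $-\Delta$ onto $e_\ell$ and reading the bubble term from $\langle\mathcal U_{\mu_j,\xi_j}^{(N+2)/(N-2)},e_\ell\rangle_{L^2}=\lambda_\kappa\langle\mathcal W_{\mu_j,\xi_j},e_\ell\rangle_{L^2}$) and the paper's route (keeping the linear piece $(\lambda_\kappa+\text{sgn}\ve)\langle\mathcal W_{\mu_j,\xi_j},e_\ell\rangle_{L^2}$ explicitly) arrive at the same integral.

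One point to tighten: for the bubble--eigenfunction interaction $\langle f(Z_\ve^1+Z_\ve^2)-f(Z_\ve^1)-f(Z_\ve^2),e_\ell\rangle_{L^2}$ in $N=5$, the fixed-radius partition $B_{\rho/2}(\xi_j)$ from Lemma~\ref{errore} is not sharp enough. On a ball of fixed volume the residual $\tau^{(N+2)/(N-2)}=\tau^{7/3}\sim\ve^{7/4}$ produced by Lemma~\ref{l01} exceeds the target $\ve^{11/4}$, and the cross term $|Z_\ve^1|^{1/3}\tau^2$ likewise only gives $\ve^{9/4}$. The paper handles this by shrinking the inner balls to $B_{\sqrt{\mu_n}}(\xi_n)$: inside, the small volume $\mu_n^{5/2}$ absorbs $\tau^{7/3}$, while outside one expands around $Z_\ve^2$ so the worst contribution is $\tau^{4/3}\int\mathcal W_{\mu_j}\sim\tau^{4/3}\mu_j^{3/2}\sim\ve^{13/4}$. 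With that adjustment your bookkeeping closes exactly as claimed.
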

\begin{proof}
Since $\varphi_\ve\in\mathcal K_\ve^\bot$, where $\mathcal K_\ve^\bot$ is given by \eqref{kbot}, by \eqref{solution},
\begin{eqnarray}\label{el}
&&\left\langle -\Delta u_\ve -f(u_\ve)-(\lambda_\kappa+\text{sgn}(\lambda-\lambda_\kappa)\ve)u_\ve, e_\ell\right\rangle_{L^2}\notag\\
&=&-\underbrace{\left\langle\Delta Z_\ve+(\lambda_\kappa+\text{sgn}(\lambda-\lambda_\kappa)\ve) Z_\ve+f(Z_\ve), e_\ell\right\rangle_{L^2}}_{(A)}-\underbrace{\left\langle f(Z_\ve+\varphi_\ve)-f(Z_\ve), e_\ell\right\rangle_{L^2}}_{(B)}
\end{eqnarray}
for all $1\leq l\leq m$, where $Z_\ve$ is given by \eqref{solution}.
For the term $(B)$, we estimate it by applying the Taylor expansion (Lemma \ref{l01}) around $Z_\ve$ with \eqref{taui+muj} and the condition~$(\bf{C})$ to obtain that
\begin{eqnarray*}
\left|(B)\right|\lesssim \sum_{j=1}^k\left(\left\langle\mathcal U_{\mu_j, \xi_j}^{\frac{4}{N-2}},|\varphi_\ve|\right\rangle_{L^2}+\left\langle\mathcal U_{\mu_j, \xi_j}^{\frac{6-N}{N-2}},|\varphi_\ve|^2\right\rangle_{L^2}\right)+\tau^{\frac{4}{N-2}}\|\varphi_\ve\|+\tau^{\frac{6-N}{N-2}}\|\varphi_\ve\|^2+\|\varphi_\ve\|^{\frac{N+2}{N-2}},
\end{eqnarray*}
which, together with the H\"older inequality, \eqref{tau+mu}, Proposition~\ref{fixedpoint} and direct computations with \eqref{talanti}, implies that
\begin{eqnarray*}
(B)=\left\{\begin{aligned}
&\mathcal O\left(e^{-\frac{2\mathfrak A_0}{\ve}}\right),\quad&\mbox{if}\,\, N=4,\\
&\mathcal O\left(\ve^{\frac{11}{4}}\right),\quad&\mbox{if}\,\, N=5.
\end{aligned}\right.
\end{eqnarray*}
The estimate of the term $(A)$ is slightly complicated.  By \eqref{Pre0001} and \eqref{solution}, we divide it as follows:
\begin{eqnarray*}
\begin{aligned}
(A) &=\underbrace{\left\langle f(Z_\ve)-\sum_{j=1}^k\beta_j f(\mathcal U_{\mu_j, \xi_j}), e_\ell\right\rangle_{L^2}}_{(I)} +\underbrace{\sum_{i=1}^m\tau_i\text{sgn}(\lambda-\lambda_\kappa)\ve \left\langle e_i, e_\ell\right\rangle_{L^2}}_{(II)}\\&+\underbrace{\sum_{j=1}^k(\lambda_\kappa+\text{sgn}(\lambda-\lambda_\kappa)\ve)\beta_j \left\langle \mathcal W_{\mu_j, \xi_j}, e_\ell\right\rangle_{L^2}}_{(III)}.
\end{aligned}
\end{eqnarray*}
Now, since $\{e_i\}$ is an orthogonal system of the eigenspace of $\lambda_{\kappa}$ given by \eqref{eqnnWu0001}, we have that $(II)=\tau_\ell\text{sgn}(\lambda-\lambda_\kappa)\ve\|e_\ell\|^2_{L^2}$.  By \eqref{expA} and direct computations with  \eqref{talanti}, \eqref{exp}, \eqref{taui+muj}, \eqref{tau+mu} and the condition~$(\bf{C})$, we have
\begin{eqnarray*}
\left\langle \mathcal W_{\mu_j, \xi_j},e_\ell\right\rangle_{L^2}=\frac{\alpha_N}{\gamma_N} \mu_j^{\frac{N-2}{2}}\int_\Omega G(x, \xi_j) e_\ell(x)\, dx+\left\{\aligned
&\mathcal{O}\left(e^{-\frac{3\mathfrak A_0}{\ve}}\right),\quad & N=4,\\
&\mathcal{O}\left(\ve^{\frac{9}{2}}\right),\quad & N=5.
\endaligned\right.
\end{eqnarray*}
Hence, similar to \eqref{prodeell},
\begin{eqnarray*}
(III)=\sum_{j=1}^kd_1 \beta_j\mu_j^{\frac{N-2}{2}}e_\ell(\xi_j)+\left\{\aligned
&\mathcal{O}\left(\ve e^{-\frac{\mathfrak A_0}{\ve}}\right),\quad & N=4,\\
&\mathcal{O}\left(\ve^{\frac{13}{4}}\right),\quad & N=5,
\endaligned\right.
\end{eqnarray*}
where $d_1>0$ is a constant.  We further divide the term $(I)$ as follows:
\begin{eqnarray*}
\begin{aligned}
(I)&=\underbrace{\left\langle f\left(\sum_{j=1}^k \beta_j  \mathcal W_{\mu_j, \xi_j}+\sum_{i=1}^m \tau_i e_i\right)-f\left(\sum_{j=1}^k \beta_j  \mathcal W_{\mu_j, \xi_j}\right)-f\left(\sum_{i=1}^m \tau_i e_i\right), e_\ell\right\rangle_{L^2}}_{(I_1)}\\
&+\underbrace{\left\langle f\left(\sum_{j=1}^k \beta_j  \mathcal W_{\mu_j, \xi_j}\right)-\sum_{j=1}^k \beta_j  f\left(\mathcal W_{\mu_j, \xi_j}\right), e_\ell\right\rangle_{L^2}}_{(I_2)}+\underbrace{\left\langle\sum_{j=1}^k \beta_j \left(f\left( \mathcal W_{\mu_j, \xi_j}\right)-f\left(\mathcal U_{\mu_j, \xi_j}\right)\right), e_\ell\right\rangle_{L^2}}_{(I_3)}\\
&+\underbrace{\left\langle f\left( \sum_{i=1}^m \tau_i e_i\right), e_\ell\right\rangle_{L^2}}_{(I_4)}.
\end{aligned}
\end{eqnarray*}
Clearly, there is nothing to estimate for the term $(I_4)$.  For the term $(I_3)$, we can use the Taylor expansion (Lemma \ref{l01}) and direct computations with  \eqref{talanti}, \eqref{exp}, \eqref{taui+muj}, \eqref{tau+mu} and the condition~$(\bf{C})$ to obtain that
\begin{eqnarray*}
\left|(I_3)\right|&\lesssim&\sum_{j=1}^k\left(\mu_j^{\frac{N-2}{2}}\left\|\mathcal U_{\mu_j, \xi_j}\right\|_{L^{\frac{4}{N-2}}}^{\frac{N-2}{4}}+\mu_j^{\frac{N+2}{2}}+\mu_j^{N-2}\left\|\mathcal U_{\mu_j, \xi_j}\right\|_{L^{\frac{6-N}{N-2}}}^{\frac{N-2}{6-N}}\right)\\
&=&\left\{\aligned
&\mathcal{O}\left(\frac{e^{-\frac{3\mathfrak A_0}{\ve}}}{\ve}\right), &\mbox{if}\,\, N=4,\\
&\mathcal{O}\left(\ve^{\frac{21}{4}}\right), & \mbox{if}\,\, N=5.
\endaligned\right.
\end{eqnarray*}
We use the same idea of the estimate of the term $(B)$ in the proof of Lemma~\ref{errore} to estimate the term $(I_1)$.  Here, we choose the small neighborhood of every point $\xi_\ell$ to be $B_{\sqrt{\mu_\ell}}(\xi_\ell)$.  In every $B_{\sqrt{\mu_n}}(\xi_\ell)$, we rewrite
\begin{eqnarray*}
&&f\left(\sum_{j=1}^k \beta_j  \mathcal W_{\mu_j, \xi_j}+\sum_{i=1}^m \tau_i e_i\right)-f\left(\sum_{j=1}^k \beta_j  \mathcal W_{\mu_j, \xi_j}\right)-f\left(\sum_{i=1}^m \tau_i e_i\right)\\
&=&\left[f\left(\beta_n\mathcal W_{\mu_n, \xi_n}+\sum_{j\neq n}\beta_j \mathcal W_{\mu_j, \xi_j}+\sum_{i=1}^m \tau_i e_i\right)-f\left(\beta_n \mathcal W_{\mu_n, \xi_n}\right)\right]\\
&&+\left[f\left(\beta_n\mathcal W_{\mu_n, \xi_n}\right)-f\left(\beta_n \mathcal W_{\mu_n, \xi_n}+\sum_{j\neq n}\beta_j  \mathcal W_{\mu_j, \xi_j}\right)\right]-f\left(\sum_{i=1}^m \tau_i e_i\right),
\end{eqnarray*}
which, together with the Taylor expansion (Lemma \ref{l01}), implies that
\begin{eqnarray*}
&&\left|f\left(\sum_{j=1}^k \beta_j  \mathcal W_{\mu_j, \xi_j}+\sum_{i=1}^m \tau_i e_i\right)-f\left(\sum_{j=1}^k \beta_j  \mathcal W_{\mu_j, \xi_j}\right)-f\left(\sum_{i=1}^m \tau_i e_i\right)\right|\\
&\lesssim&\sum_{j\neq n}\left(\left|\mathcal W_{\mu_n, \xi_n}\right|^{\frac{4}{N-2}}\mathcal W_{\mu_j, \xi_j}+\left|\mathcal W_{\mu_j, \xi_j}\right|^{\frac{N+2}{N-2}}\right)+\tau\left|\mathcal W_{\mu_n, \xi_n}\right|^{\frac{4}{N-2}}+\tau^{\frac{N+2}{N-2}}.
\end{eqnarray*}
In the remaining region $\Omega\backslash\cup_{\ell=1}^{k}B_{\sqrt{\mu_\ell}}(\xi_\ell)$, we rewrite
\begin{eqnarray*}
&&f\left(\sum_{j=1}^k \beta_j  \mathcal W_{\mu_j, \xi_j}+\sum_{i=1}^m \tau_i e_i\right)-f\left(\sum_{j=1}^k \beta_j  \mathcal W_{\mu_j, \xi_j}\right)-f\left(\sum_{i=1}^m \tau_i e_i\right)\\
&=&\left[f\left(\sum_{j=1}^k\beta_j  \mathcal W_{\mu_j, \xi_j}+\sum_{i=1}^m \tau_i e_i\right)-f\left(\sum_{i=1}^m \tau_i e_i\right)\right]-f\left(\sum_{j=1}^k \beta_j  \mathcal W_{\mu_j, \xi_j}\right),
\end{eqnarray*}
which, together with the Taylor expansion (Lemma \ref{l01}), implies that
\begin{eqnarray*}
&&\left|f\left(\sum_{j=1}^k \beta_j  \mathcal W_{\mu_j, \xi_j}+\sum_{i=1}^m \tau_i e_i\right)-f\left(\sum_{j=1}^k \beta_j  \mathcal W_{\mu_j, \xi_j}\right)-f\left(\sum_{i=1}^m \tau_i e_i\right)\right|\\
&\lesssim&\sum_{j=1}^k \left(\tau^{\frac{4}{N-2}}\mathcal W_{\mu_j, \xi_j}+\tau^{\frac{6-N}{N-2}}\left|\mathcal W_{\mu_j, \xi_j}\right|^2+\left|\mathcal W_{\mu_j, \xi_j}\right|^{\frac{N+2}{N-2}}\right).
\end{eqnarray*}
Now, by direct computations with \eqref{talanti}, \eqref{exp}, \eqref{taui+muj}, \eqref{tau+mu} and the condition~$(\bf{C})$ in the above estimates, we get that
\begin{eqnarray*}
(I_1)&=&\int_{\bigcup_{n=1}^k B_{\sqrt{\mu_n}}(\xi_n)}\left[\ldots\ldots\right]\, dx +\int_{\Omega\setminus\bigcup_{n=1}^k B_{\sqrt{\mu_n}}(\xi_n) }\left[\ldots\ldots\right]\, dx\\
&=&\left\{\aligned
&\mathcal{O}\left(\frac{e^{-\frac{3\mathfrak A_0}{\ve}}}{\ve^2}\right), &\mbox{if}\,\, N=4,\\
&\mathcal{O}\left(\ve^{\frac{13}{4}}\right), & \mbox{if}\,\, N=5.
\endaligned\right.
\end{eqnarray*}
The terms  $(I_2)$ can be estimated in the same rate by the same way, so we omit it here.  The thesis now follows from inserting all the above estimates into \eqref{el}.
\end{proof}

Next, we expand the second equation of \eqref{c}.
\begin{lemma}\label{c3l}
Let $\rho>0$ small enough.  Then
\begin{eqnarray*}\label{c2mp}
&&\left\langle -\Delta u_\ve -f(u_\ve)-(\lambda_\kappa+\text{sgn}(\lambda-\lambda_\kappa)\ve)u_\ve, \psi^h_{\mu_n, \xi_n}\right\rangle_{L^2}\notag\\
&=&\left\{\aligned
&d_{2}\beta_n\mu_n|\log \mu_n|+\sum_{i=1}^{m}d_{3}\tau_ie_i(\xi_n)+\mathcal{O}\left(
e^{-\frac{\mathfrak A_0}{\ve}}\right),\quad& N=4,\ \ h=0\text{ and }1\leq n\leq k,\\
&d_{2}\beta_n\mu_n+\sum_{i=1}^{m}d_{3}\mu_n^{\frac{1}{2}}\tau_ie_i(\xi_n)+\mathcal{O}\left(\ve^{\frac{5}{2}}\right),\quad& N=5,\ \ h=0\text{ and }1\leq n\leq k,\\
&-d_{4}\mu_n\sum_{i=1}^{m}\tau_i\frac{\partial e_i(\xi_n)}{\partial \xi_{n, h}}+\mathcal{O}\left(e^{-\frac{2\mathfrak A_0}{\ve}}\right),\quad& N=4,\ \ 1\leq h\leq 4\text{ and }1\leq n\leq k,\\
&-d_{4}\mu_n^{\frac{3}{2}}\sum_{i=1}^{m}\tau_i\frac{\partial e_i(\xi_n)}{\partial \xi_{n, h}}+\mathcal{O}\left(\ve^{\frac{13}{4}}\right),\quad& N=5,\ \ 1\leq h\leq 5\text{ and }1\leq n\leq k
\endaligned\right.
\end{eqnarray*}
as $\ve\to0$ uniformly with respect to $\pmb\xi\in \mathcal O_\rho$, where $d_2,d_3,d_4>0$ are constants.
\end{lemma}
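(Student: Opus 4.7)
The plan is to decompose the inner product using the ansatz $u_\ve = Z_\ve + \varphi_\ve$ with $\varphi_\ve \in \mathcal K_\ve^\bot$. Assuming the test function should be $P\psi^h_{\mu_n,\xi_n}$ (which lies in $\mathcal K_\ve$), the orthogonality $\langle \varphi_\ve, P\psi^h_{\mu_n,\xi_n}\rangle = 0$ from \eqref{kbot} combined with integration by parts yields $\langle -\Delta \varphi_\ve, P\psi^h_{\mu_n,\xi_n}\rangle_{L^2} = 0$, so the expression reduces to
\begin{eqnarray*}
\left\langle -\Delta Z_\ve - f(Z_\ve) - \lambda Z_\ve, P\psi^h_{\mu_n,\xi_n}\right\rangle_{L^2}
- \left\langle f(u_\ve)-f(Z_\ve), P\psi^h_{\mu_n,\xi_n}\right\rangle_{L^2}
- \lambda\left\langle \varphi_\ve, P\psi^h_{\mu_n,\xi_n}\right\rangle_{L^2}.
\end{eqnarray*}
The last two terms I would treat as remainders: for the first of the two, Taylor-expand via Lemma \ref{l01} and use Proposition \ref{fixedpoint} together with the $L^{2N/(N+2)}$-size of $P\psi^h$ from \eqref{projpsiA}; for the last, combine the $H^1$-estimate of $\varphi_\ve$ with the $L^2$-size of $P\psi^h$ (of order $\mu_n^{(N-4)/2}$ for $h=0$ and $\mu_n^{(N-2)/2}$ for $h=1,\ldots,N$, up to logarithmic factors in $N=4$). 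In both cases the scaling \eqref{tau+mu} shows the contribution is below the stated remainder.

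For the leading term, I would substitute $-\Delta(\beta_j\mathcal W_{\mu_j,\xi_j}) = \beta_j f(\mathcal U_{\mu_j,\xi_j})$ from \eqref{Pre0001} and $-\Delta e_i = \lambda_\kappa e_i$ to rewrite it as
\begin{eqnarray*}
\left\langle \sum_{j=1}^{k}\beta_j f(\mathcal U_{\mu_j,\xi_j}) - f(Z_\ve), P\psi^h_{\mu_n,\xi_n}\right\rangle_{L^2}
 - \mathrm{sgn}(\lambda{-}\lambda_\kappa)\ve\bigl\langle Z_\ve^2, P\psi^h_{\mu_n,\xi_n}\bigr\rangle_{L^2}
 - \lambda \bigl\langle Z_\ve^1, P\psi^h_{\mu_n,\xi_n}\bigr\rangle_{L^2}.
\end{eqnarray*}
The two ``easy'' pieces are handled by \eqref{projpsi}--\eqref{projpsiA}: the $\ve\langle Z_\ve^2, P\psi^h\rangle$ piece only contributes below the stated remainder given \eqref{tau+mu}, while $\lambda\langle Z_\ve^1, P\psi^h\rangle$ concentrates at $\xi_n$ and contributes at most a bubble--bubble boundary interaction of order $\mu^{(N-2)/2}\mu_n^{(N-4)/2}$ for $h=0$ and $\mu^{(N-2)/2}\mu_n^{(N-2)/2}$ for $h\geq 1$, again absorbed into the remainders.

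The heart of the proof is the ``nonlinear'' term $\langle \sum_j\beta_j f(\mathcal U_{\mu_j,\xi_j})-f(Z_\ve), P\psi^h\rangle_{L^2}$. Using Lemma \ref{l01} I would Taylor-expand $f(Z_\ve)=f(Z_\ve^1+Z_\ve^2)$ around $Z_\ve^1$, then expand $f(Z_\ve^1)$ around the dominant bubble $\beta_n\mathcal U_{\mu_n,\xi_n}$ in a shrinking neighborhood of $\xi_n$ (and around $Z_\ve^2$ in the complement). The surviving leading parts are: a self-interaction $\int f'(\mathcal U_{\mu_n,\xi_n})(\mathcal W_{\mu_n,\xi_n}-\mathcal U_{\mu_n,\xi_n})\psi^h_{\mu_n,\xi_n}\,dx$, which by \eqref{exp} is controlled by an integral of the form $\int \mathcal U_{\mu_n,\xi_n}^{4/(N-2)}\psi^h_{\mu_n,\xi_n}H(x,\xi_n)\,dx$; and a coupling $\int f'(\mathcal U_{\mu_n,\xi_n})Z_\ve^2\,\psi^h_{\mu_n,\xi_n}\,dx$. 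For $h=0$ the self-interaction gives, after the change of variable $x=\xi_n+\mu_n y$ and expansion of $H$ around $\xi_n$, the constant $d_2 \beta_n\mu_n|\log\mu_n|$ in $N=4$ (the logarithm coming from the tail behavior of $\mathcal U_{1,0}^2\psi^0_{1,0}$) and $d_2\beta_n\mu_n$ in $N=5$, while the coupling yields $\sum_i d_3\,\tau_i e_i(\xi_n)$ (respectively $\sum_i d_3\,\mu_n^{1/2}\tau_i e_i(\xi_n)$). For $h\in\{1,\ldots,N\}$ the self-interaction vanishes to leading order by oddness of $\psi^h_{1,0}$, and the coupling, after Taylor-expanding $e_i$ around $\xi_n$, produces exactly the term $-d_4\mu_n^{(N-2)/2}\sum_i\tau_i\partial_{\xi_{n,h}}e_i(\xi_n)$ as claimed.

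The main obstacle will be the bookkeeping for the cross bubble--bubble and bubble--boundary contributions: each term needs a careful scaling check against \eqref{tau+mu} to confirm it is absorbed into the remainder, especially for the $h=0$ case in $N=4$ where the logarithmic factor is delicate and one must compute precisely the integral $\int_{|y|\le 1/\mu_n}\mathcal U_{1,0}^{4/(N-2)}(y)\,\psi^0_{1,0}(y)\,dy$ to extract the $|\log\mu_n|$ correction.
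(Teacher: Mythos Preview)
Your decomposition has two genuine gaps. First, you misidentify the source of the leading term $d_2\beta_n\mu_n|\log\mu_n|$ (for $N=4$, $h=0$) and $d_2\beta_n\mu_n$ (for $N=5$). It does \emph{not} come from the ``self-interaction'' $\int f'(\mathcal U_{\mu_n,\xi_n})(\mathcal W_{\mu_n,\xi_n}-\mathcal U_{\mu_n,\xi_n})\psi^0_{\mu_n,\xi_n}\,dx$: after rescaling $x=\xi_n+\mu_n y$, the integrand $\mathcal U_{1,0}^{4/(N-2)}\psi^0_{1,0}$ decays like $|y|^{-(N+2)}$ and is integrable in $\mathbb R^N$, so no logarithm appears and that integral is only $O(\mu_n^{N-2})$. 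The term you dismissed, $\lambda\langle Z_\ve^1,P\psi^0_{\mu_n,\xi_n}\rangle_{L^2}$, is the correct source: for $j=n$ one has $\langle\mathcal W_{\mu_n,\xi_n},P\psi^0_{\mu_n,\xi_n}\rangle_{L^2}\approx\int_\Omega\mathcal U_{\mu_n,\xi_n}\psi^0_{\mu_n,\xi_n}\,dx$, and the rescaled integrand $\mathcal U_{1,0}\psi^0_{1,0}$ decays like $|y|^{-2(N-2)}$, which for $N=4$ is exactly $|y|^{-4}$ and produces the $|\log\mu_n|$ divergence when integrated over $\Omega/\mu_n$. So this is a main term, not a remainder.

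Second, your plan to control $\langle f(u_\ve)-f(Z_\ve),P\psi^h_{\mu_n,\xi_n}\rangle_{L^2}$ by a crude Taylor bound and H\"older fails: the linear piece $\langle f'(Z_\ve)\varphi_\ve,P\psi^h_{\mu_n,\xi_n}\rangle_{L^2}$, dominated near $\xi_n$ by $\int\mathcal U_{\mu_n,\xi_n}^{4/(N-2)}|\varphi_\ve|\,|\psi^h_{\mu_n,\xi_n}|\,dx$, is bounded by $\|\mathcal U_{\mu_n,\xi_n}^{4/(N-2)}\psi^h_{\mu_n,\xi_n}\|_{L^{2N/(N+2)}}\|\varphi_\ve\|\sim\mu_n^{-1}\|\varphi_\ve\|$, which for $N=4$, $h=0$ is $O(1)$ and for $N=5$ is $O(\ve^{1/4})$ --- both far above the stated remainders. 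The paper's remedy is to split $f'(Z_\ve)\varphi_\ve$ further as $(f'(Z_\ve)-f'(Z_\ve^1))\varphi_\ve+(f'(Z_\ve^1)-f'(\mathcal U_{\mu_n,\xi_n}))\varphi_\ve+f'(\mathcal U_{\mu_n,\xi_n})\varphi_\ve$, and then use \eqref{projpsi-eqn} to write $\langle f'(\mathcal U_{\mu_n,\xi_n})\psi^h_{\mu_n,\xi_n},\varphi_\ve\rangle_{L^2}=\tfrac{N-2}{N+2}\langle\nabla P\psi^h_{\mu_n,\xi_n},\nabla\varphi_\ve\rangle=0$ by the very orthogonality $\varphi_\ve\in\mathcal K_\ve^\bot$; only the residual piece $\langle f'(\mathcal U_{\mu_n,\xi_n})(P\psi^h_{\mu_n,\xi_n}-\psi^h_{\mu_n,\xi_n}),\varphi_\ve\rangle_{L^2}$ survives and is small by \eqref{projpsi}. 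Without isolating this exact cancellation, the $\varphi_\ve$ contribution cannot be absorbed.
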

\begin{proof}
As that in the proof of Lemma~\ref{c1l}, we rewrite
\begin{eqnarray}\label{c3lA}
&&\left\langle -\Delta u_\ve -f(u_\ve)-(\lambda_\kappa+\text{sgn}(\lambda-\lambda_\kappa)\ve)u_\ve, \psi^h_{\mu_n, \xi_n}\right\rangle_{L^2}\notag\\
&=&-\underbrace{\left\langle \Delta Z_\ve+(\lambda_\kappa+\text{sgn}(\lambda-\lambda_\kappa)\ve) Z_\ve +f(Z_\ve), P\psi^h_{\mu_n, \xi_n}\right\rangle_{L^2}}_{(A)}\notag\\
&&-\underbrace{\left\langle f(Z_\ve+\varphi_\ve)-f(Z_\ve)-f'(Z_\ve)\varphi_\ve, P\psi^h_{\mu_n, \xi_n}\right\rangle_{L^2}}_{(B)}-(\lambda_\kappa+\text{sgn}(\lambda-\lambda_\kappa)\ve)\underbrace{\left\langle \varphi_\ve, P\psi^h_{\mu_n, \xi_n}\right\rangle_{L^2}}_{(C)}\notag\\
&&-\underbrace{\left\langle \left(f'(Z_\ve)-f'(Z_\ve^1)\right)\varphi_\ve, P\psi^h_{\mu_n, \xi_n}\right\rangle_{L^2}}_{(D)}-\underbrace{\left\langle \left(f'(Z_\ve^1)-f'(\Un)\right)\varphi_\ve, P\psi^h_{\mu_n, \xi_n}\right\rangle_{L^2}}_{(E)}\notag\\
&&-\underbrace{\left\langle f'(\Un)\left(P\psi^h_{\mu_n, \xi_n}-\psi^h_{\mu_n, \xi_n}\right),\varphi_\ve\right\rangle_{L^2}}_{(F)},
\end{eqnarray}
where $Z_\ve$ and $Z_\ve^1$ are given by \eqref{solution}.  For the term~$(A)$, by \eqref{Pre0001} and the fact that $\{e_i\}$ is an orthogonal system of the eigenspace of $\lambda_{\kappa}$ given by \eqref{eqnnWu0001}, we further rewrite
\begin{eqnarray*}
(A)&=&\underbrace{\left\langle f(Z_\ve)-\sum_{j=1}^k\beta_j f(\mathcal U_{\mu_j, \xi_j}), P\psi^h_{\mu_n, \xi_n}\right\rangle_{L^2}}_{(I)} +\underbrace{\sum_{i=1}^m\tau_i \text{sgn}(\lambda-\lambda_\kappa)\ve\left\langle e_i, P\psi^h_{\mu_n, \xi_n}\right\rangle_{L^2}}_{(II)}\notag\\
&&+\underbrace{\sum_{j=1}^k\beta_j(\lambda_\kappa+\text{sgn}(\lambda-\lambda_\kappa)\ve)\left\langle \mathcal W_{\mu_j, \xi_j},P\psi^h_{\mu_n, \xi_n}\right\rangle_{L^2}}_{(III)}.
\end{eqnarray*}
Similar to \eqref{prodeell},
\begin{eqnarray*}
(II)&=&\left\{\aligned
&\mathcal{O}\left(e^{-\frac{\mathfrak A_0}{\ve}}\right),& \mbox{if}\,\, N=4\text{ and }h=0,\\
&\mathcal{O}\left(\ve^{\frac 52}\right),& \mbox{if}\,\, N=5\text{ and }h=0,\\
&\mathcal{O}\left(e^{-\frac{2\mathfrak A_0}{\ve}}\right),& \mbox{if}\,\, N=4\text{ and }1\leq h\leq 4,\\
&\mathcal{O}\left(\ve^4\right),& \mbox{if}\,\, N=5\text{ and }1\leq h\leq 5.
\endaligned \right.
\end{eqnarray*}
For the term~$(III)$, we divide $\Omega$ into $B_{\frac\rho2}(\xi_j)$, $B_{\frac\rho2}(\xi_n)$ and $\Omega\backslash\left(B_{\frac\rho2}(\xi_j)\cup B_{\frac\rho2}(\xi_n)\right)$ for $j\not= n$.  Then by direct computations with \eqref{talanti}, \eqref{exp}, \eqref{taui+muj}, \eqref{tau+mu} and the condition~$(\bf{C})$, we have
\begin{eqnarray*}
(III)=\left\{\begin{aligned}
&d_2\beta_n\mu_n |\log \mu_n|+\mathcal{O}\left(e^{-\frac{\mathfrak A_0}{\ve}}\right),\quad &\mbox{if}\,\, N=4\text{ and }h=0,\\
&d_2\beta_n\mu_n+\mathcal{O}\left(\ve^{\frac{5}{2}}\right), \quad &\mbox{if}\,\, N=5\text{ and }h=0,\\
&\mathcal{O}\left(e^{-\frac{2\mathfrak A_0}{\ve}}\right),& \mbox{if}\,\, N=4\text{ and }1\leq h\leq 4,\\
&\mathcal{O}\left(\ve^{\frac 92}\right), & \mbox{if}\,\, N=5\text{ and }1\leq h\leq 5.
\end{aligned}\right.
\end{eqnarray*}
where $d_2>0$ is a constant.
For the term~$(I)$, we further rewrite
\begin{eqnarray*}
(I)
&=&\underbrace{\left\langle f\left(Z_\ve\right)-f\left(\sum_{j=1}^k\beta_j \mathcal W_{\mu_j, \xi_j}\right)-\sum_{i=1}^m f'\left(\sum_{j=1}^k\beta_j \mathcal W_{\mu_j, \xi_j}\right)\tau_i e_i, P\psi^h_{\mu_n, \xi_n}\right\rangle_{L^2}}_{(I_1)}\\
&&+\underbrace{\left\langle f\left(\sum_{j=1}^k\beta_j \mathcal W_{\mu_j, \xi_j}\right)-\sum_{j=1}^k\beta_j f\left(\mathcal W_{\mu_j, \xi_j}\right), P\psi^h_{\mu_n, \xi_n}\right\rangle_{L^2}}_{(I_2)}\\
&&+\underbrace{\sum_{j=1}^k\beta_j\left\langle f\left(\mathcal W_{\mu_j, \xi_j}\right)- f\left(\mathcal U_{\mu_j, \xi_j}\right), P\psi^h_{\mu_n, \xi_n}\right\rangle_{L^2}}_{(I_3)}+\underbrace{\sum_{j=1}^k\sum_{i=1}^m \tau_i \left\langle f'(\mathcal U_{\mu_j, \xi_j})e_i, P\psi^h_{\mu_n, \xi_n}\right\rangle_{L^2}}_{(I_4^1)}\\
&&+\underbrace{\sum_{i=1}^m\tau_i\left\langle\left(f'\left(\sum_{j=1}^k\beta_j \mathcal W_{\mu_j, \xi_j}\right)-\sum_{j=1}^k f'(\beta_j\mathcal W_{\mu_j, \xi_j})\right)e_i, P\psi^h_{\mu_n, \xi_n}\right\rangle_{L^2}}_{(I_4^2)}\\
&&+\underbrace{\sum_{j=1}^k\sum_{i=1}^m \tau_i \left\langle \left(f'(\beta_j\mathcal W_{\mu_j, \xi_j})-f'(\beta_j\mathcal U_{\mu_j, \xi_j})\right)e_i, P\psi^h_{\mu_n, \xi_n}\right\rangle_{L^2}}_{(I_4^3)}.
\end{eqnarray*}
For what concerning $(I_1)$, by using the Taylor expansion (Lemma \ref{l01}) around $Z^1_{\ve}$, we get that
\begin{eqnarray*}
\left|(I_1)\right|&\lesssim&\tau^2 \sum_{j=1}^k \left\langle \mathcal U_{\mu_j, \xi_j}^{\frac{6-N}{N-2}},|\psi^0_{\mu_n, \xi_n}|\right\rangle_{L^2}+\tau^{\frac{N+2}{N-2}}\|\psi^0_{\mu_n, \xi_n}\|_{L^1}.
\end{eqnarray*}
We then divide $\Omega$ into $B_{\frac\rho2}(\xi_j)$, $B_{\frac\rho2}(\xi_n)$ and $\Omega\backslash\left(B_{\frac\rho2}(\xi_j)\cup B_{\frac\rho2}(\xi_n)\right)$ for $j\not= n$, and directly estimate all the above terms with \eqref{talanti}, \eqref{exp}, \eqref{taui+muj}, \eqref{tau+mu} to obtain that
\begin{eqnarray*}
\left|(I_1)\right|
&=&\left\{\begin{aligned}
&\mathcal{O}\left(\frac{e^{-\frac{3\mathfrak A_0}{\ve}}}{\ve^3}\right),\quad&\mbox{if}\,\, N=4\text{ and }h=0,\\
&\mathcal{O}\left(\ve^{\frac 52}\right),\quad&\mbox{if}\,\, N=5\text{ and }h=0,\\
&\mathcal{O}\left(\frac{e^{-\frac{4\mathfrak A_0}{\ve}}}{\ve^3}\right),\quad&\mbox{if}\,\, N=4\text{ and }1\leq h\leq 4,\\
&\mathcal{O}\left(\ve^{4}\right),\quad&\mbox{if}\,\, N=5\text{ and }1\leq h\leq 5.
\end{aligned}\right.
\end{eqnarray*}
For what concerning $(I_2)$, we apply the same idea of the estimate of the term~$(B)$ in the proof of Lemma~\ref{errore} by dividing $\Omega$ into the small neighborhood of the point $\xi_\ell$, say $B_{\frac\rho2}(\xi_\ell)$, for all $1\leq \ell\leq k$ and the remaining region of $\Omega$, that is, $\Omega\backslash\cup_{\ell=1}^{k}B_{\frac\rho2}(\xi_\ell)$.  Then we estimate $(I_2)$ in these regions with \eqref{talanti}, \eqref{exp}, \eqref{taui+muj}, \eqref{tau+mu} and the condition~$(\bf{C})$ in a very similar way of $(I_1)$ in the proof of Lemma~\ref{c1l} to obtain that
\begin{eqnarray*}
(I_2)&=&\left\{\begin{aligned}
&\mathcal{O}\left( e^{-\frac{\mathfrak A_0}{\ve}}\right),\quad &\mbox{if}\,\, N=4\text{ and }h=0,\\
&\mathcal{O}\left(\ve^3\right), \quad &\mbox{if}\,\, N=5\text{ and }h=0,\\
&\mathcal{O}\left( e^{-\frac{2\mathfrak A_0}{\ve}}\right),\quad &\mbox{if}\,\, N=4\text{ and }\text{ and }1\leq h\leq 4,\\
&\mathcal{O}\left(\ve^{\frac{9}{2}}\right), \quad &\mbox{if}\,\, N=5\text{ and }\text{ and }1\leq h\leq 5.
\end{aligned}\right.
\end{eqnarray*}
For what concerning $(I_3)$, the estimate is similar to that of $(I_1)$, that is, we first using the Taylor expansion (Lemma \ref{l01}) around $\mathcal U_{\mu_j, \xi_j}$, next divide $\Omega$ into $B_{\frac\rho2}(\xi_j)$, $B_{\frac\rho2}(\xi_n)$ and $\Omega\backslash\left(B_{\frac\rho2}(\xi_j)\cup B_{\frac\rho2}(\xi_n)\right)$ for $j\not= n$ and then directly estimate all the above terms with \eqref{talanti}, \eqref{exp}, \eqref{taui+muj}, \eqref{tau+mu} and the condition~$(\bf{C})$ to obtain that
\begin{eqnarray*}
(I_3)
&=&\left\{\begin{aligned}
&\mathcal{O}\left( e^{-\frac{\mathfrak A_0}{\ve}}\right),\quad &\mbox{if}\,\, N=4\text{ and }h=0,\\
&\mathcal{O}\left(\ve^3\right), \quad &\mbox{if}\,\, N=5\text{ and }h=0,\\
&\mathcal{O}\left( e^{-\frac{2\mathfrak A_0}{\ve}}\right),\quad &\mbox{if}\,\, N=4\text{ and }\text{ and }1\leq h\leq 4,\\
&\mathcal{O}\left(\ve^{\frac{9}{2}}\right), \quad &\mbox{if}\,\, N=5\text{ and }\text{ and }1\leq h\leq 5.
\end{aligned}\right.
\end{eqnarray*}
We remark that more precise estimates of $(I_1)$, $(I_2)$ and $(I_3)$ can be found in \cite{MP}, which we do not need here according to our choice of $\tau$ and $\mu$ given by \eqref{tau+mu}.
For what concerning $(I_4^1)$, by direct computations with \eqref{talanti}, \eqref{exp},  \eqref{projpsi}, \eqref{taui+muj}, \eqref{tau+mu} and the condition~$(\bf{C})$, we get that
\begin{eqnarray*}
(I_4^1)&=&\sum_{i=1}^m \tau_i\left\langle f'(\mathcal U_{\mu_n, \xi_n})e_i, \left(\psi^h_{\mu_n, \xi_n}+\mathcal O\left(\mu^{\frac{N-4}{2}}\right)\right)\right\rangle_{L^2}\\
&&+\sum_{j\neq n}\sum_{i=1}^m \tau_i\left\langle f'(\mathcal U_{\mu_j, \xi_j})e_i,P\psi^h_{\mu_n, \xi_n}\right\rangle_{L^2}\\
&=&\left\{\begin{aligned}
&\sum_{i=1}^md_3\tau_ie_i(\xi_n)+\mathcal O\left(\frac{e^{-\frac{2\mathfrak A_0}{\ve}}}{\ve^2}\right),\quad &\mbox{if}\,\, N=4\text{ and }h=0,\\
&\sum_{i=1}^md_3\mu_n^{\frac12}\tau_ie_i(\xi_n)+O\left(\ve^3\right),\quad &\mbox{if}\,\, N=5\text{ and }h=0,\\
&-\left(\sum_{i=1}^{m}d_{4}\mu_n\tau_i\frac{\partial e_i(\xi_n)}{\partial \xi_{n, h}}\right)+\mathcal O\left(\frac{e^{-\frac{2\mathfrak A_0}{\ve}}}{\ve^2}\right),\quad &\mbox{if}\,\, N=4\text{ and }1\leq h\leq 4,\\
&-\left(\sum_{i=1}^{m}d_{4}\mu_n^{\frac{3}{2}}\tau_i\frac{\partial e_i(\xi_n)}{\partial \xi_{n, h}}\right)+O\left(\ve^{\frac{9}{2}}\right),\quad &\mbox{if}\,\, N=5\text{ and }1\leq h\leq 5,
\end{aligned}\right.
\end{eqnarray*}
where $d_3, d_4>0$ are constants.  For what concerning $(I_4^2)$, the estimate is very similar to that of $(I_2)$, so we omit the details here.  The resulting estimate is given by
\begin{eqnarray*}
(I_4^2)
&=&\left\{\begin{aligned}
&\mathcal{O}\left(\frac{e^{-\frac{3\mathfrak A_0}{\ve}}}{\ve^2}\right),\quad&\mbox{if}\,\, N=4\text{ and }h=0,\\
&\mathcal{O}\left(\ve^{\frac 92}\right),\quad&\mbox{if}\,\, N=5\text{ and }h=0,\\
&\mathcal{O}\left(\frac{e^{-\frac{4\mathfrak A_0}{\ve}}}{\ve^2}\right),\quad&\mbox{if}\,\, N=4\text{ and }1\leq h\leq 4,\\
&\mathcal{O}\left(\ve^{6}\right),\quad&\mbox{if}\,\, N=5\text{ and }1\leq h\leq 5.
\end{aligned}\right.
\end{eqnarray*}
For what concerning $(I_4^3)$, the estimate is very similar to that of $(I_3)$, so we also omit the details here.  The resulting estimate is given by
\begin{eqnarray*}
(I_4^3)
&=&\left\{\begin{aligned}
&\mathcal{O}\left(\frac{e^{-\frac{3\mathfrak A_0}{\ve}}}{\ve^2}\right),\quad&\mbox{if}\,\, N=4\text{ and }h=0,\\
&\mathcal{O}\left(\ve^{\frac 92}\right),\quad&\mbox{if}\,\, N=5\text{ and }h=0,\\
&\mathcal{O}\left(\frac{e^{-\frac{4\mathfrak A_0}{\ve}}}{\ve^2}\right),\quad&\mbox{if}\,\, N=4\text{ and }1\leq h\leq 4,\\
&\mathcal{O}\left(\ve^{6}\right),\quad&\mbox{if}\,\, N=5\text{ and }1\leq h\leq 5.
\end{aligned}\right.
\end{eqnarray*}
The estimates of $(B)$-$(F)$ are also similar.  We only point out that in estimating $(C)$ and $(F)$, we just first use the H\"older inequality and Proposition~\ref{fixedpoint} and then directly compute them with \eqref{talanti}, \eqref{exp},  \eqref{projpsi}, \eqref{taui+muj}, \eqref{tau+mu} and the condition~$(\bf{C})$ while, in estimating $(B)$, $(D)$ and $(E)$, we need further employing the Taylor expansion (Lemma \ref{l01}) around $Z_\ve$, $Z_\ve^1$ and $\Un$, respectively.  The resulting estimates are
\begin{eqnarray*}
(B)+(C)+(D)+(E)+(F)=\left\{\begin{aligned}
&\mathcal{O}\left( e^{-\frac{\mathfrak A_0}{\ve}}\right),\quad&\mbox{if}\,\, N=4,\\
&\mathcal{O}\left(\ve^{\frac 52}\right),\quad&\mbox{if}\,\, N=5.
\end{aligned}\right.
\end{eqnarray*}
The conclusions then follow from inserting all the above estimates into \eqref{c3lA}.
\end{proof}

\section{Solving The reduced problem: proof of the Theorems}
To complete our constructions, by Proposition~\ref{fixedpoint} and Lemmas~\ref{cs}, \ref{c1l} and \ref{c3l}, we need to solve the following system of the parameters $\pmb{t}$, $\pmb{s}$ and $\pmb{\xi}$ as $\ve\to0^+$:
\begin{eqnarray}\label{redprob}
\left\{\aligned
0=&\text{sgn}(\lambda-\lambda_\kappa)\ve\tau_l\|e_l\|_2^2+\sum_{j=1}^{k}d_{1}\beta_j\mu_j^{\frac{N-2}{2}}e_l(\xi_j)+\left\langle f\left(\sum_{n=1}^{m}\tau_ne_n\right), e_l\right\rangle_{L^2}\\
&+\left\{\aligned
&\mathcal{O}\left(e^{-\frac{2\mathfrak A_0}{\ve}}\right),\quad & N=4,\\
&\mathcal{O}\left(\ve^{\frac{11}{4}}\right),\quad & N=5,
\endaligned\right.,\quad\mbox{for all}\,\, 1\leq l\leq m,\\
0=&\sum_{l=1}^{m}d_{3}\mu_j^{\frac{N-4}{2}}\tau_le_l(\xi_j)+\left\{\aligned
&d_{2}\beta_j\mu_j|\log \mu_j|+\mathcal{O}\left(e^{-\frac{\mathfrak A_0}{\ve}}\right),\quad &N=4,\\
&d_{2}\beta_j\mu_j+\mathcal{O}\left(\ve^{\frac 52}\right),\quad&N=5
\endaligned\right.,
\quad\mbox{for all}\,\, 1\leq j\leq k,\\
0=&-\left(\sum_{l=1}^{m}d_{4}\mu_j^{\frac{N-2}{2}}\tau_l\frac{\partial e_l(\xi_j)}{\partial x_i}\right)+\left\{\aligned
&\mathcal{O}\left(e^{-\frac{2\mathfrak A_0}{\ve}}\right),\quad &N=4,\\
&\mathcal{O}\left(\ve^{\frac{13}{4}}\right),\quad& N=5.
\endaligned\right.,\quad\mbox{for all}\,\, 1\leq j\leq k,\ 1\leq l\leq m.
\endaligned\right.
\end{eqnarray}
According to \eqref{taui+muj}, \eqref{tau+mu} and the condition~$(\bf{C})$, if we take $\beta_j=-\text{sgn}\left(\sum_{l=1}^{m}t_{l,0}e_{l}(\xi_{j,0})\right)$ for all $1\leq j\leq k$,
\begin{eqnarray}\label{Redp0011}
\left\{\aligned
\mu&=(1+o(1))\exp\left(-\frac{d_1d_3L_0^2}{\left(d_2\left(\sum_{j=1}^k s_{j,0}^2\right)\left\|\sum_{l=1}^{m}t_{l,0}e_l\right\|_2^2\right)\ve}\right),\\
\tau&=(1+o(1))\frac{d_1L_0\exp\left(-\frac{d_1d_3L_0^2}{\left(d_2\left(\sum_{j=1}^k s_{j,0}^2\right)\left\|\sum_{l=1}^{m}t_{l,0}e_l\right\|_2^2\right)\ve}\right)}{\left\|\sum_{l=1}^{m}t_{l,0}e_l\right\|_2^2\ve}
\endaligned\right.
\end{eqnarray}
for $N=4$ and
\begin{eqnarray}\label{Redp0008}
\left\{\aligned
\tau&=(1+o(1))\left(\frac{\left\|\sum_{l=1}^{m}t_{l,0}e_l\right\|_2^2}{\left\|\sum_{n=1}^{m}t_{n,0}e_n\right\|_{2^*}^{2^*}}\right)^{\frac{3}{4}}\ve^{\frac{3}{4}},\\
\mu&=(1+o(1))\left(\frac{d_3L_0}{ d_2\left(\sum_{j=1}^k s_{j,0}^2\right)}\right)^2\left(\frac{\left\|\sum_{l=1}^{m}t_{l,0}e_l\right\|_2^2}{\left\|\sum_{n=1}^{m}t_{n,0}e_n\right\|_{2^*}^{2^*}}\right)^{\frac{3}{2}}\ve^{\frac{3}{2}}
\endaligned\right.
\end{eqnarray}
for $N=5$ in \eqref{redprob}, then \eqref{redprob} reduces to the following systems:
\begin{eqnarray}\label{redprobA}
\left\{\aligned
o(1)=&\sum_{l=1}^{m}t_{l}\frac{\partial e_l(\xi_{j})}{\partial x_i},\quad 1\leq j\leq k\text{ and }1\leq i\leq 4,\\
o(1)=&\left(\frac{L\|e_l\|_2^2}{\left\|\sum_{l=1}^{m}t_{l}e_l\right\|_2^2}\right)t_{l}+\sum_{j=1}^{k}\text{sgn}\left(\sum_{l=1}^{m}t_{l}e_{l}(\xi_{j})\right)s_{j}e_l(\xi_{j}),
\quad 1\leq l\leq m,\\
o(1)=&\sum_{l=1}^{m}\left(\left(\sum_{i=1}^k s_{i}^2\right)e_l(\xi_{j})\right)t_{l}+\text{sgn}\left(\sum_{l=1}^{m}t_{l}e_{l}(\xi_{j})\right)Ls_{j},
\quad 1\leq j\leq k
\endaligned\right.
\end{eqnarray}
for $N=4$ and
\begin{eqnarray}\label{redprobB}
\left\{\aligned
o(1)=&\sum_{l=1}^{m}t_{l}\frac{\partial e_l(\xi_{j})}{\partial x_i},\quad 1\leq j\leq k\text{ and }1\leq i\leq 5,\\
o(1)=&t_{l}\|e_l\|_2^2-\left(\frac{\left\|\sum_{l=1}^{m}t_{l}e_l\right\|_2^2}{\left\|\sum_{n=1}^{m}t_{n}e_n\right\|_{2^*}^{2^*}}\right)\left\langle f\left(\sum_{n=1}^{m}t_{n}e_n\right), e_l\right\rangle_{L^2}, 1\leq l\leq m,\\
o(1)=&\sum_{l=1}^{m}t_{l}e_l(\xi_j)+
\frac{\text{sgn}\left(\sum_{l=1}^{m}t_{l}e_{l}(\xi_{j})\right)L}{\sum_{j=1}^k s_{j}^2}s_{j}^{\frac12},
\quad 1\leq j\leq k
\endaligned\right.
\end{eqnarray}
for $N=5$, where
\begin{eqnarray*}
L=-\left(\sum_{j=1}^k\left|\sum_{l=1}^{m}t_{l}e_{l}(\xi_{j})\right|s_{j}^{\frac{N-2}2}\right).
\end{eqnarray*}
Moreover,
\begin{eqnarray}\label{sgnlambda}
\left\{\aligned
&\text{sgn}(\lambda-\lambda_\kappa)>0,\quad&\mbox{if}\,\, N=4\\
&\text{sgn}(\lambda-\lambda_\kappa)<0,\quad&\mbox{if}\,\, N=5.
\endaligned\right.
\end{eqnarray}
We remark that under the condition~$(\bf{C})$, $\beta_j=-\text{sgn}\left(\sum_{l=1}^{m}t_{l}e_{l}(\xi_{j})\right)$ for all $1\leq j\leq k$ as $\ve\to0^+$.
Clearly, by the condition~$(\bf{C})$ once more, the limit systems of \eqref{redprobA} and \eqref{redprobB} are given by
\begin{eqnarray}\label{Redp0012}
\left\{\aligned
0=&\sum_{l=1}^{m}t_{l,0}\frac{\partial e_l(\xi_{j,0})}{\partial x_i},\quad 1\leq j\leq k\text{ and }1\leq i\leq 4,\\
0=&\left(\frac{L_0\|e_l\|_2^2}{\left\|\sum_{l=1}^{m}t_{l,0}e_l\right\|_2^2}\right)t_{l,0}+\sum_{j=1}^{k}\text{sgn}\left(\sum_{l=1}^{m}t_{l,0}e_{l}(\xi_{j,0})\right)s_{j,0}e_l(\xi_{j,0}),
\quad 1\leq l\leq m,\\
0=&\sum_{l=1}^{m}\left(\left(\sum_{i=1}^k s_{i,0}^2\right)e_l(\xi_{j,0})\right)t_{l,0}+\text{sgn}\left(\sum_{l=1}^{m}t_{l,0}e_{l}(\xi_{j,0})\right)L_0s_{j,0},
\quad 1\leq j\leq k
\endaligned\right.
\end{eqnarray}
for $N=4$ and
\begin{eqnarray}\label{Redp0007}
\left\{\aligned
0=&\sum_{l=1}^{m}t_{l,0}\frac{\partial e_l(\xi_{j,0})}{\partial x_i},\quad 1\leq j\leq k\text{ and }1\leq i\leq 5,\\
0=&t_{l,0}\|e_l\|_2^2-\left(\frac{\left\|\sum_{l=1}^{m}t_{l,0}e_l\right\|_2^2}{\left\|\sum_{n=1}^{m}t_{n,0}e_n\right\|_{2^*}^{2^*}}\right)\left\langle f\left(\sum_{n=1}^{m}t_{n,0}e_n\right), e_l\right\rangle_{L^2}, 1\leq l\leq m,\\
0=&\sum_{l=1}^{m}t_{l,0}e_l(\xi_{j,0})+
\frac{\text{sgn}\left(\sum_{l=1}^{m}t_{l,0}e_{l}(\xi_{j,0})\right)L_0}{\sum_{j=1}^k s_{j,0}^2}s_{j,0}^{\frac12},
\quad 1\leq j\leq k
\endaligned\right.
\end{eqnarray}
for $N=5$, respectively, where
\begin{eqnarray}\label{L_0}
L_0=-\left(\sum_{j=1}^k\left|\sum_{l=1}^{m}t_{l,0}e_{l}(\xi_{j,0})\right|s_{j,0}^{\frac{N-2}2}\right).
\end{eqnarray}
Thus, to solve \eqref{redprob} for the parameters $\pmb{t}$, $\pmb{s}$ and $\pmb{\xi}$ as $\ve\to0^+$, we only need to solve \eqref{redprobA} and \eqref{redprobB} for $N=4$ and $N=5$, respectively, which requires us to construct stable solutions of the limit systems~\eqref{Redp0012} and \eqref{Redp0007}.  We also observe that the system~\eqref{Redp0012} and \eqref{Redp0007} are invariant under the action of $\mathcal{R}\in O(m)$.  Indeed, for every $\mathcal{R}\in O(m)$ and every $(\nu_{1}, \nu_{2}, \cdots, \nu_{m})\in\bbr^m$, let
\begin{eqnarray}\label{Redp1001}
\pmb{\nu}_{*}:=(\nu_{1}^*, \nu_{2}^*, \cdots, \nu_{m}^*)=\mathcal{R}(\nu_{1}, \nu_{2}, \cdots, \nu_{m})
\end{eqnarray}
and
\begin{eqnarray}\label{Redp1002}
(e_1^*, e_2^*, \cdots, e_m^*)=\mathcal{R}(e_1, e_2, \cdots, e_m).
\end{eqnarray}
Then $\{e_i^*\}$ is also an orthogonal system in $\Xi_{\kappa}$ which is the eigenspace according to the eigenvalue $\lambda_{\kappa}$ and
\begin{eqnarray}\label{Redp0039}
\sum_{n=1}^{m}\nu_{n}e_n=\sum_{n=1}^{m}\nu_{n}^*e_n^*,
\end{eqnarray}
which also implies that $\sum_{n=1}^{m}t_{n,0}e_n=\sum_{n=1}^{m}=t_{n,0}^*e_n^*$ and
\begin{eqnarray*}
L_0=-\left(\sum_{j=1}^k\left|\sum_{l=1}^{m}t_{l,0}e_{l}(\xi_{j,0})\right|s_{j,0}^{\frac{N-2}2}\right)=-\left(\sum_{j=1}^k\left|\sum_{l=1}^{m}t_{l,0}^*e_{l}^*(\xi_{j,0})\right|s_{j,0}^{\frac{N-2}2}\right).
\end{eqnarray*}
\subsection{The case $N=4$}
\eqref{Redp0012} is a full coupled system. Our key observation is that the third equation of \eqref{Redp0012} is homogeneous, which implies that it has a solution
\begin{eqnarray}\label{Redp0037}
\pmb{s}_{0}=\left(\left|\sum_{l=1}^{m}t_{l,0}e_{l}(\xi_{1,0})\right|, \left|\sum_{l=1}^{m}t_{l,0}e_{l}(\xi_{2,0})\right|, \cdots, \left|\sum_{l=1}^{m}t_{l,0}e_{l}(\xi_{k,0})\right|\right).
\end{eqnarray}
Under the special choice of \eqref{Redp0037}, the limit system~\eqref{Redp0012} is reduced to
\begin{eqnarray}\label{Redp0038}
\left\{\aligned
0=&\sum_{l=1}^{m}t_{l,0}\frac{\partial e_l(\xi_{j,0})}{\partial x_i},\quad 1\leq j\leq k\text{ and }1\leq i\leq 4,\\
0=&\left\|\sum_{n=1}^{m}t_{n,0}e_n\right\|_2^2\left(\sum_{j=1}^{k}\left(\sum_{n=1}^{m}t_{n,0}e_{n}(\xi_{j,0})\right)e_l(\xi_{j,0})\right)\\
&-\left(\sum_{j=1}^k\left(\sum_{n=1}^{m}t_{n,0}e_{n}(\xi_{j,0})\right)^2\right)\|e_l\|_2^2t_{l,0},\quad 1\leq l\leq m.
\endaligned\right.
\end{eqnarray}
\begin{lemma}\label{LemRedp0005}
Up to a suitable choice of $\mathcal{R}\in O(m)$, \eqref{Redp0038} has a solution $(\pmb{t}_0, \pmb{\xi}_0)\in \left(\bbr\backslash\{0\}\right)^m\times\Omega^k$, which is a solution of the variational problem
\begin{eqnarray*}
\max_{(\pmb{\nu}, \pmb{\eta})\in\mathbb{S}^{m-1}\times\overline{\Omega}}\frac{\sum_{j=1}^k\left(\sum_{l=1}^{m}\nu_{l}e_{l}(\eta_{j})\right)^2}{\left\|\sum_{l=1}^{m}\nu_{l}e_l\right\|_2^2}.
\end{eqnarray*}
In particular, if $m=1$ then $t_{1,0}$ can be taken any number.
\end{lemma}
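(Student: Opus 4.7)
The plan is to realize $(\pmb{t}_0,\pmb{\xi}_0)$ as a maximizer of the Rayleigh-type quotient
\[
\Phi(\pmb{\nu},\pmb{\eta}):=\frac{\sum_{j=1}^k\bigl(\sum_{l=1}^{m}\nu_{l}e_{l}(\eta_{j})\bigr)^2}{\bigl\|\sum_{l=1}^{m}\nu_{l}e_l\bigr\|_2^2}
\]
on $\mathbb{S}^{m-1}\times\overline{\Omega}^k$ and then to read off its Euler--Lagrange conditions so as to recover the two groups of equations in \eqref{Redp0038}. The $0$-homogeneity of $\Phi$ in $\pmb{\nu}$ makes the restriction to the sphere harmless.

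First I would establish existence of a maximizer by compactness. Since $e_l|_{\partial\Omega}=0$, $\Phi$ extends continuously by zero whenever some $\eta_j\in\partial\Omega$, and $\mathbb{S}^{m-1}\times\overline{\Omega}^k$ is compact, so a maximizer $(\pmb{t}_0^*,\pmb{\xi}_0^*)$ exists. The maximum value is strictly positive: for any $\pmb{\nu}\in\mathbb{S}^{m-1}$ one can pick $\eta_j\in\Omega$ at which $\sum_l\nu_l e_l$ does not vanish. Hence $\sum_l t_{l,0}^*e_l\not\equiv 0$, and since the nonnegative scalar $\eta\mapsto(\sum_l t_{l,0}^*e_l(\eta))^2$ attains its supremum at interior points and vanishes on $\partial\Omega$, I may move each $\xi_{j,0}^*$ to an interior maximizer without decreasing $\Phi$. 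So we may assume $\xi_{j,0}^*\in\Omega$ and $\sum_l t_{l,0}^* e_l(\xi_{j,0}^*)\neq 0$ for every $j$.

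Next I compute the Euler--Lagrange equations at this interior maximizer. Differentiating $\Phi$ in $\eta_{j,i}$ and cancelling the nonzero factor $\sum_l t_{l,0}^* e_l(\xi_{j,0}^*)$ yields $\sum_l t_{l,0}^*\partial_{x_i}e_l(\xi_{j,0}^*)=0$, which is the first group of equations in \eqref{Redp0038}. Writing $F$ and $G$ for the numerator and denominator of $\Phi$ and using the $L^2$-orthogonality of $\{e_l\}$ to get $G=\sum_l\nu_l^2\|e_l\|_2^2$, the variational identity $G\,\partial_{\nu_l}F=F\,\partial_{\nu_l}G$ at $(\pmb{t}_0^*,\pmb{\xi}_0^*)$ reads
\[
\bigl\|\textstyle\sum_n t_{n,0}^*e_n\bigr\|_2^2\sum_{j=1}^k\bigl(\textstyle\sum_n t_{n,0}^*e_n(\xi_{j,0}^*)\bigr)e_l(\xi_{j,0}^*)=\Bigl(\sum_{j=1}^k\bigl(\textstyle\sum_n t_{n,0}^*e_n(\xi_{j,0}^*)\bigr)^2\Bigr)\|e_l\|_2^2\,t_{l,0}^*,
\]
which is precisely the second group of equations in \eqref{Redp0038}.

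Finally, to meet the requirement $\pmb{t}_0\in(\bbr\setminus\{0\})^m$ I invoke the $O(m)$-invariance already recorded in \eqref{Redp1001}--\eqref{Redp0039}. The maximizer $\pmb{t}_0^*$ may have vanishing components, but since $\pmb{t}_0^*\neq 0$ one can pick $\mathcal{R}\in O(m)$ so that every component of $\mathcal{R}\pmb{t}_0^*$ is nonzero; passing to the rotated basis $\{e_l^*\}$ leaves $\Phi$ unchanged by \eqref{Redp0039}, so the whole argument above applied in the new basis exhibits a solution of \eqref{Redp0038} (relative to $\{e_l^*\}$) with every coefficient nonzero. The case $m=1$ is then immediate: $\Phi$ becomes independent of $\nu_1$, the second group of \eqref{Redp0038} degenerates to $0=0$, and the first group merely forces $\xi_{j,0}$ to be a critical point of $e_1$, so any nonzero $t_{1,0}$ works. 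The main obstacle I anticipate is the bookkeeping in the rotation step, namely verifying cleanly that the Euler--Lagrange system transforms covariantly under $\mathcal{R}$ so that the rotated equations still have exactly the form of \eqref{Redp0038}; all other steps are compactness and a one-line differentiation.
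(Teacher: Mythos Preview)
Your proposal is correct and follows the same approach as the paper: both identify \eqref{Redp0038} as the Euler--Lagrange system of the homogeneous quotient $\mathcal{H}_*$ (your $\Phi$), maximize over the compact set $\mathbb{S}^{m-1}\times\overline{\Omega}^k$, observe that the maximizer is interior since the eigenfunctions vanish on $\partial\Omega$, and invoke the $O(m)$-invariance recorded in \eqref{Redp0039} to ensure all components of $\pmb{t}_0$ are nonzero. You supply more explicit detail on the Euler--Lagrange derivation and the interior argument than the paper does, but the strategy is identical.
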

\begin{proof}
\eqref{Redp0038} is variational since it is the Euler-Lagrange equation of the function
\begin{eqnarray}\label{vbN=4}
\mathcal{H}_*(\pmb{\nu}, \pmb{\eta})=\frac{\sum_{j=1}^k\left(\sum_{l=1}^{m}\nu_{l}e_{l}(\eta_{j})\right)^2}{\left\|\sum_{l=1}^{m}\nu_{l}e_l\right\|_2^2}.
\end{eqnarray}
According to the homogeneity of $\pmb{t}_0$ of \eqref{Redp0038}, we can find critical points of $\mathcal{H}_*(\pmb{\nu}, \pmb{\eta})$ by solving the variational problem
\begin{eqnarray}\label{Redp0040}
c_*=\max_{\mathbb{S}^{m-1}\times\overline{\Omega}^k}\mathcal{H}_*(\pmb{\nu}, \pmb{\eta}).
\end{eqnarray}
Since $\mathbb{S}^{m-1}\times\overline{\Omega}^k$ is compact, $c_*$ is achieved, say $(\pmb{t}_0, \pmb{\xi}_0)$.  Moreover, it is easy to see that $\pmb{\xi}_0\in\Omega^k$ since the eigenfunctions $\{e_l\}\subset C^{\infty}(\overline{\Omega})\cap H^1_0(\Omega)$.  On the other hand, by \eqref{Redp0039},  $\mathcal{H}_*(\pmb{\nu}, \pmb{\eta})$ is invariant under the action of $\mathcal{R}\in O(m)$.  Thus, by acting a suitable $\mathcal{R}\in O(m)$ if necessary, we can always assume that $\pmb{t}_0\in\left(\bbr\backslash\{0\}\right)^m$.  If $m=1$ then we notice that the function \eqref{vbN=4} is independent of $\nu_1$.  Thus, the second equation of \eqref{Redp0038} satisfied by any $t_{1,0}$, provided that $\pmb{\xi}_0$ is the maximum point of \eqref{Redp0040}.
\end{proof}

We are in the position to prove Theorem~\ref{Thm0001}.

\begin{proof}[Proof of Theorems \ref{Thm0001}]
We recall that by Proposition~\ref{fixedpoint} and Lemmas~\ref{cs}, \ref{c1l} and \ref{c3l}, we can complete the proof of Theorem~\ref{Thm0001} by solving \eqref{redprob} for the parameters $\pmb{t}$, $\pmb{s}$ and $\pmb{\xi}$ as $\ve\to0^+$, which, under the assumptions $\beta_j=-\text{sgn}\left(\sum_{l=1}^{m}t_{l,0}e_{l}(\xi_{j,0})\right)$ for all $1\leq j\leq k$ and \eqref{Redp0011}, is equivalent to solving \eqref{redprobA} for the parameters $\pmb{t}$, $\pmb{s}$ and $\pmb{\xi}$ as $\ve\to0^+$.  Moreover, by \eqref{sgnlambda}, we need $\lambda>\lambda_\kappa$ for $N=4$.  Since the arguments will be slightly different for four different situations, we divide the remaining proof into the following four parts.

\vskip0.08in

{\bf Case.~1}\quad $k\geq2$ and $m\geq2$.

\vskip0.04in

We fix
\begin{eqnarray*}
\left\{\aligned
\mu&=\exp\left(-\frac{d_1d_3L_0^2}{\left(d_2\left(\sum_{j=1}^k s_{j,0}^2\right)\left\|\sum_{l=1}^{m}t_{l,0}e_l\right\|_2^2\right)\ve}\right),\\
\tau&=\frac{d_1L_0\exp\left(-\frac{d_1d_3L_0^2}{\left(d_2\left(\sum_{j=1}^k s_{j,0}^2\right)\left\|\sum_{l=1}^{m}t_{l,0}e_l\right\|_2^2\right)\ve}\right)}{\left\|\sum_{l=1}^{m}t_{l,0}e_l\right\|_2^2\ve},
\endaligned\right.
\end{eqnarray*}
where $(\pmb{t}_0, \pmb{\xi}_0)$ is constructed in Lemma~\ref{LemRedp0005} and $\pmb{s}_0$ is given by \eqref{Redp0037}.  We also claim that for any fixed $\pmb{t}_0$ and $\pmb{\xi}_0$,
\begin{eqnarray*}
\left(\text{Ker}\left(\widetilde{\mathbb{M}}_{k\times k}(\pmb{s}_{0})\right)\right)^{\perp}\not=\emptyset,
\end{eqnarray*}
where $\widetilde{\mathbb{M}}_{k\times k}(\pmb{s}_{0})$ is the matrix of the linearization of the third equation of \eqref{Redp0012} at the special solution $\pmb{s}_{0}$ given by \eqref{Redp0037}, $\text{Ker}\left(\widetilde{\mathbb{M}}_{k\times k}(\pmb{s}_{0})\right)$ is its kernel and $\text{Ker}\left(\widetilde{\mathbb{M}}_{k\times k}(\pmb{s}_{0})\right)^{\perp}$ is the orthocomplement of $\text{Ker}\left(\widetilde{\mathbb{M}}_{k\times k}(\pmb{s}_{0})\right)$ in $\mathbb{R}^4$.  Indeed, for any fixed $\pmb{t}_0$ and $\pmb{\xi}_0$, we denote the elements of $\widetilde{\mathbb{M}}_{k\times k}(\pmb{s}_{0})$ by $m_{j,i}$.  By \eqref{Redp0037}, we have
\begin{eqnarray*}
m_{j,j}=\sum_{i=1;l\not=j}^{k}\left|\sum_{l=1}^mt_{l,0}e_{l}(\xi_{i,0})\right|^2
\end{eqnarray*}
for all $1\leq j\leq k$.  Since
\begin{eqnarray*}
0<(k-1)\left(\sum_{l=1}^{k}\left|e_{1}(\xi_{l,0})\right|^2\right)=\sum_{j=1}^{k}m_{j,j}=\text{Trace}\left(\widetilde{\mathbb{M}}_{k\times k}(\pmb{s}_{0})\right),
\end{eqnarray*}
we know that $\left(\text{Ker}\left(\widetilde{\mathbb{M}}_{k\times k}(\pmb{s}_{0})\right)\right)^{\perp}\not=\emptyset$.  It follows from the implicit function theorem that the third equation of \eqref{redprobA} has a solution of the form
\begin{eqnarray}\label{Redp0037}
\pmb{s}_{\ve}=\left(\left|\sum_{l=1}^{m}t_{l,0}e_{l}(\xi_{1,0})\right|+o(1), \left|\sum_{l=1}^{m}t_{l,0}e_{l}(\xi_{2,0})\right|+o(1), \cdots, \left|\sum_{l=1}^{m}t_{l,0}e_{l}(\xi_{k,0})\right|+o(1)\right)
\end{eqnarray}
as $\ve\to0^+$,
where $\pmb{s}_{\ve}-\pmb{s}_{0}\in\left(\text{Ker}\left(\widetilde{\mathbb{M}}_{k\times k}(\pmb{s}_{0})\right)\right)^{\perp}$.  Thus, \eqref{redprobA} is reduced to
\begin{eqnarray}\label{redprobA02}
\left\{\aligned
o(1)=&\sum_{l=1}^{m}t_{l}\frac{\partial e_l(\xi_{j})}{\partial x_i},\quad 1\leq j\leq k\text{ and }1\leq i\leq 4,\\
o(1)=&\left\|\sum_{n=1}^{m}t_{n}e_n\right\|_2^2\left(\sum_{j=1}^{k}\left(\sum_{n=1}^{m}t_{n}e_{n}(\xi_{j})\right)e_l(\xi_{j})\right)\\
&-\left(\sum_{j=1}^k\left(\sum_{n=1}^{m}t_{n}e_{n}(\xi_{j})\right)^2\right)\|e_l\|_2^2t_{l},\quad 1\leq l\leq m
\endaligned\right.
\end{eqnarray}
under the condition~$(\bf{C})$.  Since \eqref{redprobA02} is variational and the limit $(\pmb{t}_0, \pmb{\xi}_0)\in \left(\bbr\backslash\{0\}\right)^m\times\Omega^k$ is a maximum point of the energy function of the limit system by Lemma~\ref{LemRedp0005}, which is stable under small perturbations.  Thus, \eqref{redprobA02} has a solution $(\pmb{t}_\ve, \pmb{\xi}_\ve)\in \left(\bbr\backslash\{0\}\right)^m\times\Omega^k$ as $\ve\to0^+$, which satisfies $(\pmb{t}_\ve, \pmb{\xi}_\ve)\to(\pmb{t}_0, \pmb{\xi}_0)$ as $\ve\to0^+$.

\vskip0.08in

{\bf Case.~2}\quad $k=1$ and $m\geq2$.

\vskip0.04in

In this case, we fix
\begin{eqnarray*}
\mu=\exp\left(-\frac{d_1d_3L_0^2}{\left(d_2s_{1,0}^2\left\|\sum_{l=1}^{m}t_{l,0}e_l\right\|_2^2\right)\ve}\right)
\end{eqnarray*}
and $s_{1}=\left|\sum_{l=1}^{m}t_{l}e_{l}(\xi_{1})\right|$, where $(\pmb{t}_0, \pmb{\xi}_0)$ is constructed in Lemma~\ref{LemRedp0005} and $s_{1,0}$ is given by \eqref{Redp0037}.  Note that the third equation in \eqref{redprobA} is originally reduced from the second equation of \eqref{redprob} under the assumptions $\beta_j=-\text{sgn}\left(\sum_{l=1}^{m}t_{l,0}e_{l}(\xi_{j,0})\right)$ for all $1\leq j\leq k$ and \eqref{Redp0011}, thus, by fixing $\mu$ in \eqref{Redp0011} as above, we can remove the third equation in \eqref{redprobA} by a suitable choice of $\tau$ in \eqref{Redp0011} to directly solve the second equation of \eqref{redprob} as $\ve\to0^+$ for $k=1$.  It follows that we can still safely arrive at the reduced problem~\eqref{redprobA02} for $k=1$ and $m\geq2$, which can be solved in the same way in the case $k\geq2$ and $m\geq2$.

\vskip0.08in

{\bf Case.~3}\quad $k\geq2$ and $m=1$.

\vskip0.04in

In this case, we fix
\begin{eqnarray*}
\tau=\frac{d_1L_0\exp\left(-\frac{d_1d_3L_0^2}{\left(d_2\left(\sum_{j=1}^k s_{j,0}^2\right)\left\|t_{1,0}e_1\right\|_2^2\right)\ve}\right)}{\left\|t_{1,0}e_1\right\|_2^2\ve},
\end{eqnarray*}
where $(t_{1,0}, \pmb{\xi}_0)$ is constructed in Lemma~\ref{LemRedp0005} and $\pmb{s}_0$ is given by \eqref{Redp0037}.
Again, we remark that the second equation in \eqref{redprobA} is originally reduced from the first equation of \eqref{redprob} under the assumptions $\beta_j=-\text{sgn}\left(\sum_{l=1}^{m}t_{l,0}e_{l}(\xi_{j,0})\right)$ for all $1\leq j\leq k$ and \eqref{Redp0011}, thus, by fixing $\tau$ in \eqref{Redp0011} as above, we can remove the second equation in \eqref{redprobA} by a suitable choice of $\mu$ in \eqref{Redp0011} to directly solve the fist equation of \eqref{redprob} as $\ve\to0^+$ for $m=1$.  It follows that the reduced problem is given by
\begin{eqnarray}\label{redprobA01}
\left\{\aligned
o(1)=&\frac{\partial e_l(\xi_{j})}{\partial x_i},\quad 1\leq j\leq k\text{ and }1\leq i\leq 4,\\
o(1)=&\left(\sum_{i=1}^k s_{i}^2\right)t_{1}e_1(\xi_{j})-\left(\sum_{j=1}^k\left|t_{1}e_{1}(\xi_{j})\right|s_{j}\right)\text{sgn}\left(t_{1}e_{1}(\xi_{j})\right)s_{j},
\quad 1\leq j\leq k
\endaligned\right.
\end{eqnarray}
Since $\pmb{\xi}_0$, given by Lemma~\ref{LemRedp0005} for $m=1$, is the maximum point of \eqref{Redp0040}, which is stable under small perturbations, the first equation of \eqref{redprobA01} has a solution $ \pmb{\xi}_\ve\in\Omega^k$ as $\ve\to0^+$, which satisfies $\pmb{\xi}_\ve\to\pmb{\xi}_0$ as $\ve\to0^+$.  Now, the third equation of \eqref{redprobA01} can be solved in the same way in the case $k\geq2$ and $m\geq2$.

\vskip0.08in

{\bf Case.~4}\quad $k=1$ and $m=1$.

\vskip0.04in

In this case, we fix $t_1=1$ and $s_{1}=\left|e_{1}(\xi_{1})\right|$.  Again, we point out that the second and the third equations in \eqref{redprobA} is originally reduced from the first and the second equations of \eqref{redprob} under the assumptions $\beta_j=-\text{sgn}\left(\sum_{l=1}^{m}t_{l,0}e_{l}(\xi_{j,0})\right)$ for all $1\leq j\leq k$ and \eqref{Redp0011}, thus, we can remove the second and the third equations in \eqref{redprobA} by directly solving the first and the second equations of \eqref{redprob} for suitable choices of $\mu$ and $\tau$ in \eqref{Redp0011}, which is equivalent to solving the following linear system:
\begin{eqnarray*}
\left\{\aligned
o(\ve\tau)=&\|e_{1}\|_2^2\ve\tau+d_1\beta_1s_1e_1(\xi_1)\mu,\\
o(\ve\tau)=&d_2\beta_1s_1\mu+d_3t_1e_1(\xi_1)\ve\tau.
\endaligned\right.
\end{eqnarray*}
We point out that the involved matrix can not be totally degenerate since $\|e_1\|_2^2>0$.  Thus, it can be always solved by suitable choices of $\mu$ and $\tau$ in \eqref{Redp0011}.  The finally reduced equation is then given by $o(1)=\frac{\partial e_l(\xi_{1})}{\partial x_i}$, $i=1,2,3,4$, which can be solved in the same way in the case $k\geq2$ and $m=1$.
\end{proof}

\subsection{The case $N=5$}
Since $\tau_{l,0}=t_{l,0}\tau$, by \eqref{Redp0008},  the limits of $\{\tau_{l,0}\}$ and the second equation of \eqref{Redp0007} are all invariant under the scalar multiplication.  Thus, if $\pmb{t}_0=\left(t_{1,0}, t_{2,0},\cdots, t_{m,0}\right)$ solves the second equation of \eqref{Redp0007} then we can find $C>0$ such that $\pmb{t}_*=\left(Ct_{1,0}, Ct_{2,0},\cdots, Ct_{m,0}\right)$ solves the following system:
\begin{eqnarray}\label{Redp0009}
0=t_{l,*}\|e_l\|_2^2-\int_{\Omega}\left|\sum_{n=1}^{m}t_{n,*}e_n\right|^{\frac{4}{3}}\left(\sum_{n=1}^{m}t_{n,*}e_n\right)e_ldx
\end{eqnarray}
for all $1\leq l\leq m$.  On the other hand, if $\pmb{t}_*=\left(t_{1,*}, t_{2,*},\cdots, t_{m,*}\right)$ solves \eqref{Redp0009} then we must have
\begin{eqnarray*}
\left\|\sum_{l=1}^{m}t_{l,*}e_l\right\|_2^2=\left\|\sum_{n=1}^{m}t_{n}e_n\right\|_{2^*}^{2^*}
\end{eqnarray*}
by multiplying \eqref{Redp0009} with $t_{l,*}$ for all $1\leq l\leq m$ and summarizing them from $l=1$ to $l=m$.  It follows that $\pmb{t}_*=\left(t_{1,*}, t_{2,*},\cdots, t_{m,*}\right)$ also solves the second equation of \eqref{Redp0007}.  Based on the above observations, Solving the second equation of \eqref{Redp0007} is equivalent to solving \eqref{Redp0009} up to scalar multiplications.  Thus, we can reduce the solvability of \eqref{Redp0007} to the solvability of the following system:
\begin{eqnarray}\label{Redp1007}
\left\{\aligned
0=&\sum_{l=1}^{m}t_{l,0}\frac{\partial e_l(\xi_{j,0})}{\partial x_i},\quad 1\leq j\leq k\text{ and }1\leq i\leq 5,\\
0=&t_{l,0}\|e_l\|_2^2-\left\langle f\left(\sum_{n=1}^{m}t_{n,0}e_n\right), e_l\right\rangle_{L^2}, 1\leq l\leq m,\\
0=&\sum_{l=1}^{m}t_{l,0}e_l(\xi_{j,0})+
\frac{\text{sgn}\left(\sum_{l=1}^{m}t_{l,0}e_{l}(\xi_{j,0})\right)L_0}{\sum_{j=1}^k s_{j,0}^2}s_{j,0}^{\frac12},
\quad 1\leq j\leq k
\endaligned\right.
\end{eqnarray}
Unlike the case of $N=4$, which we decouple the limit system by constructing a special solution of one of the equations, in the case of $N=5$, the second equation is automatically decoupled in the limit system~\eqref{Redp1007}.  Thus, we can start by constructing stable solutions of the second equation.
\begin{lemma}\label{LemRedp0001}
The second equation of the system~\eqref{Redp1007} has a nondegenerate and nontrivial solution $\pmb{t}_0$ in the sense that $t_{l,0}\not=0$ for all $1\leq l\leq m$, which is also the solution of the variational problem
\begin{eqnarray*}
\max_{\pmb{\nu}\in\bbr^m}\left(\frac{1}{2}\left\|\sum_{n=1}^{m}\nu_{n}e_n\right\|_{2}^{2}-\frac{1}{2^*}\left\|\sum_{n=1}^{m}\nu_{n}e_n\right\|_{2^*}^{2^*}\right).
\end{eqnarray*}
\end{lemma}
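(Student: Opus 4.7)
The plan is to realize the second equation of \eqref{Redp1007} as the Euler--Lagrange system for the functional
\begin{equation*}
\mathcal{F}(\pmb{\nu}):=\frac{1}{2}\bigg\|\sum_{n=1}^{m}\nu_{n}e_n\bigg\|_{2}^{2}-\frac{1}{2^*}\bigg\|\sum_{n=1}^{m}\nu_{n}e_n\bigg\|_{2^*}^{2^*}
\end{equation*}
on $\bbr^m$, and to produce a solution by maximizing $\mathcal{F}$ directly. Since $\{e_n\}$ is an orthogonal system of non-zero functions in $L^2(\Omega)$, the map $\pmb{\nu}\mapsto u_{\pmb{\nu}}:=\sum_n\nu_ne_n$ is a continuous linear injection $\bbr^m\hookrightarrow\Xi_\kappa$, so both $\|u_{\pmb{\nu}}\|_2$ and $\|u_{\pmb{\nu}}\|_{2^*}$ are continuous and strictly positive on the compact unit sphere $\mathbb{S}^{m-1}$. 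Writing $\pmb{\nu}=t\pmb{w}$ with $|\pmb{w}|=1$ and $t\ge 0$, the one-variable map $t\mapsto\mathcal{F}(t\pmb{w})$ is positive for small $t>0$ and tends to $-\infty$ as $t\to+\infty$ since $2^*=\tfrac{10}{3}>2$; taking the supremum in $t$ (attained at the unique positive $t_*(\pmb{w})=(\|u_{\pmb w}\|_2^2/\|u_{\pmb w}\|_{2^*}^{2^*})^{1/(2^*-2)}$) and then over the compact $\mathbb{S}^{m-1}$ produces a global maximizer $\pmb{t}_0\neq\pmb{0}$ with $\mathcal{F}(\pmb{t}_0)>0$.

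Next, using the $L^2$-orthogonality of $\{e_l\}$, a direct computation gives $\partial_l\mathcal{F}(\pmb{\nu})=\nu_l\|e_l\|_2^2-\langle f(u_{\pmb{\nu}}),e_l\rangle_{L^2}$, so the critical-point equation $\nabla\mathcal{F}(\pmb{t}_0)=0$ is precisely the second equation of \eqref{Redp1007}. To ensure that every component $t_{l,0}$ is non-zero, I would exploit the $O(m)$-invariance already recorded around \eqref{Redp0039}: $\mathcal{F}$ depends only on the element $u_{\pmb{\nu}}\in\Xi_\kappa$, hence is invariant under the joint action $(\pmb{\nu},\{e_l\})\mapsto(\mathcal{R}\pmb{\nu},\mathcal{R}\{e_l\})$ for any $\mathcal{R}\in O(m)$. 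Since $\{\mathcal{R}\in O(m)\colon(\mathcal{R}\pmb{t}_0)_l=0\}$ is a nowhere-dense (Haar-null) subvariety of $O(m)$ for each $l$, a generic rotation $\mathcal{R}$ sends $\pmb{t}_0$ to a vector all of whose components are non-zero; after relabelling the orthogonal basis of $\Xi_\kappa$ accordingly, the resulting maximizer has the required property.

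Finally, I would address the nondegeneracy of $\pmb{t}_0$ by computing the Hessian
\begin{equation*}
H_{lp}(\pmb{t}_0)=\delta_{lp}\|e_l\|_2^2-(2^*-1)\int_{\Omega}|u_{\pmb{t}_0}|^{4/3}e_le_p\,dx,
\end{equation*}
which is negative semi-definite by the second-order necessary condition at a maximum. Contracting with $\pmb{t}_0$ and using the critical-point identity yields $\pmb{t}_0^{\top}H(\pmb{t}_0)\pmb{t}_0=-(2^*-2)\sum_l t_{l,0}^2\|e_l\|_2^2<0$, giving strict negativity in the radial direction. \textbf{The main obstacle} is promoting this to strict negative-definiteness on all of $\bbr^m$: the plan is to view $\pmb{t}_0$ as the maximum of $\mathcal{F}|_{\mathcal{N}}=\tfrac{1}{N}\|u_{\pmb{\nu}}\|_2^2$ on the Nehari manifold $\mathcal{N}=\{\pmb{\nu}:\|u_{\pmb{\nu}}\|_2^2=\|u_{\pmb{\nu}}\|_{2^*}^{2^*}\}$, and to combine the transverse (radial) strict negativity above with a tangential second-order analysis on $T_{\pmb{t}_0}\mathcal{N}$; the finite-dimensional nature of $\Xi_\kappa$ together with the strict convexity of $t\mapsto|t|^{2^*}$ should supply the remaining strict negativity.
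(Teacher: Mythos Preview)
Your treatment of existence (coercivity along rays, compactness on $\mathbb{S}^{m-1}$) and of the identification $\nabla\mathcal{F}(\pmb t_0)=0$ with the second equation of \eqref{Redp1007} is correct and essentially matches the paper. Your device of applying a generic $\mathcal R\in O(m)$ to force all components $t_{l,0}\neq 0$ is also legitimate. The genuine gap is in the nondegeneracy step, which you flag yourself as ``the main obstacle'' and only sketch.

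Your proposed route---restrict $\mathcal F$ to the Nehari set $\mathcal N$ and use the tangential second-order condition together with strict convexity of $t\mapsto|t|^{2^*}$---does not yield new information. A direct computation shows that for any tangent vector $\pmb v\in T_{\pmb t_0}\mathcal N$ the constrained second variation of $\mathcal F|_{\mathcal N}$ at $\pmb t_0$ equals, up to a positive factor, $\|u_{\pmb v}\|_2^2-(2^*-1)\int_\Omega|u_{\pmb t_0}|^{2^*-2}u_{\pmb v}^2=H_{\pmb t_0}(\pmb v,\pmb v)$, i.e.\ exactly the unconstrained Hessian you already know is only semi-definite. So the Nehari reduction recovers the same inequality and cannot by itself exclude a kernel direction; strict convexity of $|t|^{2^*}$ is already encoded in the factor $2^*-1$ and adds nothing further.

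The paper closes this gap with an idea you are missing: pass to a basis $\{e_l\}$ diagonalizing the Hessian, and observe from the Euler--Lagrange equations that $H(\pmb t_0)\pmb t_0=-(2^*-2)\,\mathrm{diag}(\|e_l\|_2^2)\,\pmb t_0$, so in this basis $t_{l,0}=0$ exactly when $H_{ll}=0$. Assuming a nontrivial kernel, one then exploits that $\pmb t_0$ is a \emph{global} maximum to force the relevant third-order derivatives $\partial^3\mathcal F/\partial\nu_n\partial\nu_l^2$ (with $l$ a kernel index) to vanish; this gives $\int_\Omega f'(u_{\pmb t_0})e_l^2=0$, which contradicts the kernel condition $\|e_l\|_2^2=(2^*-1)\int_\Omega f'(u_{\pmb t_0})e_l^2$. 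The crucial point is that second-order information alone (your Hessian and Nehari arguments) is insufficient; you need the third-order necessary condition at a global maximizer.
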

\begin{proof}
It is easy to see that the second equation of the system~\eqref{Redp1007} is variational
since it is the Euler-Lagrange equation of the function
\begin{eqnarray}\label{Redp0017}
\mathcal{F}(\pmb{\nu})=\frac{1}{2}\left\|\sum_{n=1}^{m}\nu_{n}e_n\right\|_{2}^{2}-\frac{1}{2^*}\left\|\sum_{n=1}^{m}\nu_{n}e_n\right\|_{2^*}^{2^*}.
\end{eqnarray}
Since $\mathcal{F}(\pmb{\nu}_j)\to-\infty$ as $|\pmb{\nu}_j|\to+\infty$, the variational problem~$\max_{\pmb{\nu}\in\bbr^m}\mathcal{F}(\pmb{\nu})$
is achieved by some $\pmb{t}_0$, where $\{\pmb{\nu}_{1}, \pmb{\nu}_{2}, \cdots, \pmb{\nu}_{m}\}$ is the standard orthogonal system in $\bbr^m$.  Thus,
\begin{eqnarray}\label{Redp0013}
\frac{\partial\mathcal{F}(\pmb{t}_0)}{\partial \nu_{l}}=t_{l,0}\|e_l\|_2^2-\left\langle f\left(\sum_{n=1}^{m}t_{n,0}e_n\right),e_l\right\rangle_{L^2}=0
\end{eqnarray}
for all $1\leq l\leq m$.  Note that $\mathcal{F}(\pmb{\nu})$ is invariant under the action of $\mathcal{R}\in O(m)$ in the sense of \eqref{Redp1001} and \eqref{Redp1002}, thus, we may assume that $\{e_1, e_2, \cdots, e_m\}$ is an orthogonal system in $\Xi_{\kappa}$ such that the Hessian $\mathbb{M}_{m\times m}(\pmb{t}_0)$ is diagonal, where the Hessian of $\mathcal{F}(\pmb{\nu})$ at $\pmb{t}_0$ is given by $\mathbb{M}_{m\times m}(\pmb{t}_0)=\left(\frac{\partial^2\mathcal{F}}{\partial \nu_{n}\partial \nu_{l}}(\pmb{t}_0)\right)_{m\times m}$ with
\begin{eqnarray}\label{Redp0014}
\frac{\partial^2\mathcal{F}(\pmb{t}_0)}{\partial \nu_{n}\partial \nu_{l}}=\delta_{n,l}\|e_l\|_2^2-\frac{7}{3}\left\langle f'\left(\sum_{n=1}^{m}t_{n,0}e_n\right), e_ne_l\right\rangle_{L^2}.
\end{eqnarray}
It remains to prove that $\pmb{t}_0$ is nondegenerate and nontrivial in the sense that $t_{l,0}\not=0$ for all $1\leq l\leq m$.  By \eqref{Redp0013} and \eqref{Redp0014},
\begin{eqnarray*}
\sum_{n=1}^{m}\frac{\partial^2\mathcal{F}(\pmb{t}_{0})}{\partial \nu_{n}\partial \nu_{l}}t_{l,0}&=&t_{l,0}\|e_l\|_2^2-\frac{7}{3}\left\langle f\left(\sum_{n=1}^{m}t_{n,0}e_n\right),e_l\right\rangle_{L^2}\notag\\
&=&-\frac{4}{3}t_{l,0}\|e_l\|_2^2
\end{eqnarray*}
for all $1\leq l\leq m$, which, together with the fact that $\mathbb{M}_{m\times m}(\pmb{t}_0)$ is diagonal, implies that $t_{l,0}\not=0$ for all $1\leq l\leq m$ if and only if $\pmb{t}_0$ is nondegenerate.  Suppose the contrary that $0$ is an eigenvalue of the Hessian $\mathbb{M}_{m\times m}(\pmb{t}_0)$.  Since $1$ is a strictly global maximum point of the function $F(\varrho)=\mathcal{F}(\varrho\pmb{t}_0)$,
without loss of generality, we may assume that $\pmb{\nu}_{1}, \pmb{\nu}_{2}, \cdots, \pmb{\nu}_{m'}$ are the kernel of the Hessian $\mathbb{M}_{m\times m}(\pmb{t}_0)$ with $m'\leq m-1$.  By the Taylor expansion and the fact that $\mathbb{M}_{m\times m}(\pmb{t}_0)$ is diagonal,
\begin{eqnarray*}
\mathcal{F}(\pmb{\nu})=\mathcal{F}(\pmb{t}_0)+\sum_{l=m'+1}^{m}\frac{\partial^2\mathcal{F}(\pmb{t}_{0})}{\partial \nu_{l}^2}\frac{(\nu_l-t_{l,0})^2}{2}+\mathcal{T}+o\left(\left|\overrightarrow{\nu}-\overrightarrow{t}_0\right|^3\right)
\end{eqnarray*}
where
\begin{eqnarray*}
\mathcal{T}=\sum_{n,l,j=1}^{m}\frac{\partial^3\mathcal{F}(\pmb{t}_{0})}{\partial \nu_{n}\partial \nu_{l}\partial \nu_{j}}\frac{(\nu_n-t_{n,0})(\nu_l-t_{l,0})(\nu_j-t_{j,0})}{6}.
\end{eqnarray*}
Since $\pmb{t}_{0}$ is a global maximum point and $\pmb{\nu}_{1}, \pmb{\nu}_{2}, \cdots, \pmb{\nu}_{m'}$ are the eigenfunctions of $0$ with $m'\leq m-1$, we must have $\frac{\partial^3\mathcal{F}(\pmb{t}_{0})}{\partial \nu_{n}\partial \nu_{l}^2}=0$
for all $1\leq l\leq m'$ and all $1\leq n\leq m$, which implies that
\begin{eqnarray*}
0=-\frac{28}{9}\left\langle f''\left(\sum_{j=1}^{m}t_{j,0}e_j\right), e_ne_l^2\right\rangle_{L^2}
\end{eqnarray*}
for all $1\leq l\leq m'$ and all $1\leq n\leq m$.  It follows that
\begin{eqnarray}\label{Redp0016}
0=\left\langle f'\left(\sum_{j=1}^{m}t_{j,0}e_j\right), e_l^2\right\rangle_{L^2}
\end{eqnarray}
for all $1\leq l\leq m'$.
On the other hand, since $\pmb{\nu}_{1}, \pmb{\nu}_{2}, \cdots, \pmb{\nu}_{m'}$ are the kernel of the Hessian $\mathbb{M}_{m\times m}(\pmb{t}_0)$, by \eqref{Redp0014},
\begin{eqnarray*}
0=\left\langle\left(1-\frac{7}{3}f\left(\sum_{j=1}^{m}t_{j,0}e_j\right)\right), e_l^2\right\rangle_{L^2}
\end{eqnarray*}
for all $1\leq l\leq m'$, which contradicts \eqref{Redp0016}.  Thus, $0$ can not be an eigenvalue of the Hessian $\mathbb{M}_{m\times m}(\pmb{t}_0)$ and $\pmb{t}_0$ is nondegenerate, which also implies that $\pmb{t}_0$ is nontrivial, that is, $t_{l,0}\not=0$ for all $1\leq l\leq m$.
\end{proof}

We next construct stable solutions of the first equation of the limit system~\eqref{Redp1007}.
\begin{lemma}\label{LemRedp0002}
Let $\pmb{t}_0$ be a nondegenerate and nontrivial solution of the second equation of the system~\eqref{Redp1007} constructed by Lemma~\ref{LemRedp0001}.  Then
the first equation of \eqref{Redp1007} has at least $n_\kappa$ solutions $\xi_{1,0}, \xi_{2,0}, \cdots, \xi_{n_0,0}$ where $\xi_{j,0}$ is either a local maximum point or a local minimum point of the function $\mathcal{G}(x)=\sum_{l=1}^{m}t_{l,0}e_l(x)$
and $n_\kappa$ is the number of the nodal domains of $\mathcal{G}(x)$.  Moreover, $\left(\text{Ker}\left(\mathbb{G}(\xi_{j,0})\right)\right)^{\perp}\not=\emptyset$
for all $1\leq j\leq n_\kappa$, where $\text{Ker}\left(\mathbb{G}(\xi)\right)$ is the kernel of the Hessian $\mathbb{G}(\xi_{j,0})=\left(\frac{\partial^2\mathcal{G}(\xi_{j,0})}{\partial x_i\partial x_j}\right)_{5\times5}$
and $\left(\text{Ker}\left(\mathbb{G}(\xi_{j,0})\right)\right)^{\perp}$ is its orthogonal complement in $\bbr^5$.
\end{lemma}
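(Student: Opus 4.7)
The plan is to recognise that the first equation of \eqref{Redp1007} is exactly the critical-point equation for the single function $\mathcal{G}(x)=\sum_{l=1}^{m}t_{l,0}e_l(x)$, namely $\nabla\mathcal{G}(\xi_{j,0})=0$. So the task reduces to producing at least $n_\kappa$ distinct interior critical points of $\mathcal{G}$ in $\Omega$, and then verifying that the Hessian at each of them is not the zero matrix. I would not attempt a delicate Morse-theoretic count; instead, I would exploit the structure of $\mathcal{G}$ directly through its nodal decomposition.

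First I would observe that $\mathcal{G}\in C^\infty(\overline{\Omega})\cap H^1_0(\Omega)$ because each $e_l$ is a smooth Dirichlet eigenfunction. Thus $\mathcal{G}$ vanishes on $\partial\Omega$ and its nodal set divides $\Omega$ into $n_\kappa$ open connected nodal domains $\Omega_1,\dots,\Omega_{n_\kappa}$. On every such $\Omega_j$, the function $\mathcal{G}$ has constant sign and vanishes on $\partial\Omega_j$. By continuity on the compact set $\overline{\Omega_j}$, $\mathcal{G}$ attains either its positive maximum (if $\mathcal{G}>0$ on $\Omega_j$) or its negative minimum (if $\mathcal{G}<0$ on $\Omega_j$) at some interior point $\xi_{j,0}\in\Omega_j$. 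At this point $\nabla\mathcal{G}(\xi_{j,0})=0$ and $\mathcal{G}(\xi_{j,0})\neq 0$. This yields the required $n_\kappa$ solutions of the first equation of \eqref{Redp1007}, each being either a local maximum or a local minimum of $\mathcal{G}$.

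For the non-triviality of $\mathbb{G}(\xi_{j,0})$, I would exploit the fact that $\mathcal{G}$ inherits the eigenvalue identity $-\Delta\mathcal{G}=\lambda_\kappa\mathcal{G}$ in $\Omega$, since each $e_l$ does. Therefore
\begin{eqnarray*}
\mathrm{Tr}\,\mathbb{G}(\xi_{j,0})=\Delta\mathcal{G}(\xi_{j,0})=-\lambda_\kappa\mathcal{G}(\xi_{j,0})\neq 0,
\end{eqnarray*}
because $\lambda_\kappa>0$ and $\mathcal{G}(\xi_{j,0})\neq 0$ by the previous step. In particular $\mathbb{G}(\xi_{j,0})$ is not the zero matrix in $\mathbb{R}^{5\times 5}$, so $\mathrm{Ker}(\mathbb{G}(\xi_{j,0}))$ is a proper subspace of $\mathbb{R}^5$ and $(\mathrm{Ker}(\mathbb{G}(\xi_{j,0})))^\perp\neq\emptyset$, as required. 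Note also that the sign of $\mathrm{Tr}\,\mathbb{G}(\xi_{j,0})$ is consistent with $\mathbb{G}(\xi_{j,0})$ being negative semidefinite at a maximum and positive semidefinite at a minimum.

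The only subtle point, and the one I regard as the main obstacle, is that the critical point $\xi_{j,0}$ may fail to be a \emph{nondegenerate} critical point in the Morse sense; indeed without additional generic-domain assumptions the Hessian $\mathbb{G}(\xi_{j,0})$ can have a nontrivial kernel. The conclusion we need, however, is much weaker: merely that the kernel is not the whole of $\mathbb{R}^5$. This weaker statement is precisely rescued by the eigenvalue equation, which forces the trace to be nonzero. This is why the statement of the lemma requires $(\mathrm{Ker}(\mathbb{G}(\xi_{j,0})))^\perp\neq\emptyset$ rather than a full nondegeneracy, and it is also why the argument applies uniformly to \emph{every} eigenvalue $\lambda_\kappa$ and \emph{every} admissible choice of $\mathbf{t}_0$ supplied by Lemma~\ref{LemRedp0001}, with no genericity hypothesis on $\Omega$.
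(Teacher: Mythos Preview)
Your proof is correct and follows essentially the same approach as the paper: identify the first equation of \eqref{Redp1007} as $\nabla\mathcal{G}=0$, locate one interior extremum in each nodal domain of the eigenfunction $\mathcal{G}$ (so that $\mathcal{G}(\xi_{j,0})\neq0$), and then use $-\Delta\mathcal{G}=\lambda_\kappa\mathcal{G}$ to conclude that $\mathrm{Tr}\,\mathbb{G}(\xi_{j,0})\neq0$, whence the Hessian has nontrivial orthocomplement to its kernel. Your write-up is in fact more explicit than the paper's about why each nodal domain yields an interior extremum.
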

\begin{proof}
The first equation of \eqref{Redp1007} is also variational since it is the Euler-Lagrange equation of the function
\begin{eqnarray*}
\mathcal{G}(x)=\sum_{l=1}^{m}t_{l,0}e_l(x).
\end{eqnarray*}
Note that $\mathcal{G}(x)$ is also an eigenfunction of the Laplacian operator $-\Delta$ in $L^2(\Omega)$ with the Dirichlet boundary according to the eigenvalue $\lambda_{\kappa}$, thus,
$\mathcal{G}(x)$ has at least $n_\kappa$ stable critical points $\xi_{1,0}, \xi_{2,0}, \cdots, \xi_{n_\kappa,0}$ where $\xi_{j,0}$ is either a local maximum point or a local minimum point of $\mathcal{G}(x)$ with $\mathcal{G}(\xi_{j,0})\not=0$ for all $1\leq j\leq n_\kappa$.  It follows that
\begin{eqnarray*}
-\Delta\mathcal{G}(\xi_{j,0})=\lambda_{\kappa}\mathcal{G}(\xi_{j,0})\not=0
\end{eqnarray*}
for all $1\leq j\leq n_\kappa$.
Since $\Delta\mathcal{G}(\xi_{j,0})=\text{Trace}\left(\mathbb{G}(\xi_{j,0})\right)$ where $\mathbb{G}(\xi_{j,0})$ is the Hessian of $\mathcal{G}(x)$ at $\xi_{j,0}$,
we know that $\left(\text{Ker}\left(\mathbb{G}(\xi_{j,0})\right)\right)^{\perp}\not=\emptyset$ for all $1\leq j\leq n_\kappa$.
\end{proof}

We finally construct stable solutions of the third equation of the system~\eqref{Redp1007}.

\begin{lemma}\label{LemRedp0003}
Let $1\leq k\leq n_\kappa$, $\pmb{t}_0$ be a nondegenerate and nontrivial solution of the second equation of the system~\eqref{Redp1007} constructed by Lemma~\ref{LemRedp0001} and $\xi_{1,0}, \cdots, \xi_{k,0}$ is the solutions of the first equation of \eqref{Redp1007} constructed in Lemma~\ref{LemRedp0002}, where $n_\kappa$ is the number of the nodal domains of the function $\sum_{l=1}^{m}t_{l,0}e_l(x)$.  Then
the third equation of the system~\eqref{Redp1007} has a solution $\pmb{s}_0$, which is the solution of the variational problem
\begin{eqnarray*}
\max_{\pmb{\nu}\in(\bbr_+)^k}\left(\sum_{j=1}^{k}\left|\sum_{l=1}^{m}t_{l,0}e_l(\xi_{j,0})\right|^2\frac{\nu_{j}^2}{2}-
\frac{1}{6}\left(\sum_{j=1}^k\left|\sum_{l=1}^{m}t_{l,0}e_{l}(\xi_{j,0})\right|\nu_{j}^{3}\right)^2\right),
\end{eqnarray*}
where $(\bbr_+)^k=\{\pmb{\nu}\in\bbr^k\mid \nu_j>0\text{ for all }1\leq j\leq k\}$.
\end{lemma}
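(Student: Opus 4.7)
The plan is to recognize the third equation of \eqref{Redp1007} as the Euler--Lagrange equation of a sextic polynomial functional on the positive orthant, after a judicious change of variables. Setting $a_j:=\left|\sum_{l=1}^{m}t_{l,0}e_l(\xi_{j,0})\right|$---which is strictly positive for every $j$, because Lemma~\ref{LemRedp0002} provides $\xi_{j,0}$ as a strict extremum of $\mathcal{G}$ with $\mathcal{G}(\xi_{j,0})\neq 0$---I multiply the third equation of \eqref{Redp1007} by $\textrm{sgn}\left(\sum_l t_{l,0}e_l(\xi_{j,0})\right)$ and insert the definition \eqref{L_0} of $L_0$ to recast the system as
\[
a_j\sum_{i=1}^{k}\nu_i^4=\left(\sum_{i=1}^{k}a_i\nu_i^3\right)\nu_j,\qquad 1\leq j\leq k,
\]
where $\nu_j:=s_{j,0}^{1/2}$.

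I next verify that the displayed system coincides with the critical point equation of
\[
\mathcal{H}(\pmb{\nu}):=\frac{1}{2}\sum_{j=1}^{k}a_j^2\nu_j^2-\frac{1}{6}\left(\sum_{j=1}^{k}a_j\nu_j^3\right)^2
\]
at any nontrivial interior critical point. A direct differentiation gives $\partial_{\nu_j}\mathcal{H}=a_j\nu_j(a_j-T\nu_j)$ with $T:=\sum_i a_i\nu_i^3$, so any interior critical point satisfies $a_j=T\nu_j$. Multiplying this relation by $\nu_j^3$, summing over $j$, and using that $T\neq 0$ (since all $a_j>0$) produces the hidden normalization $\sum_i\nu_i^4=1$, which when reinserted into $a_j=T\nu_j$ recovers the displayed system exactly.

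To establish existence I will maximize $\mathcal{H}$ on $[0,\infty)^k$. Three elementary observations suffice: $\mathcal{H}(\mathbf{0})=0$; $\mathcal{H}$ is strictly positive on small positive vectors since the quadratic term dominates the sextic perturbation; and $\mathcal{H}\to-\infty$ as $|\pmb{\nu}|\to\infty$ because the negative sextic term eventually dominates. A global maximum is therefore attained at a point where $\mathcal{H}>0$. The maximum cannot lie on any boundary face $\{\nu_j=0\}$, because $\partial^2_{\nu_j^2}\mathcal{H}|_{\nu_j=0}=a_j^2>0$, so perturbing $\nu_j$ to a small positive value strictly increases $\mathcal{H}$. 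Hence the maximizer $\pmb{\nu}_0$ lies in $(\bbr_+)^k$, satisfies the Euler--Lagrange equation, and the prescription $s_{j,0}:=(\nu_j^0)^2$ delivers the sought solution of the third equation of \eqref{Redp1007}.

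The main conceptual obstacle I anticipate is spotting the hidden normalization $\sum_i\nu_i^4=1$ forced at interior critical points of $\mathcal{H}$: it is this automatic constraint that reconciles the algebraic factor $1/\sum_i s_i^2$ appearing in the original system with the unconstrained variational problem on $(\bbr_+)^k$. Once the substitution $\nu_j=s_{j,0}^{1/2}$ and this normalization are in hand, the remainder of the argument is a soft compactness-plus-second-derivative check, analogous in spirit to the maximization arguments already used in Lemma~\ref{LemRedp0005} for $N=4$ and Lemma~\ref{LemRedp0001} for $N=5$.
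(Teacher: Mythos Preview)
Your proof is correct and follows essentially the same route as the paper: the same substitution $\nu_j=s_{j,0}^{1/2}$, the same functional $\mathcal{H}$, and the same maximization on the positive orthant with a boundary-exclusion argument. You are in fact more explicit than the paper on the one delicate point---the automatic constraint $\sum_i\nu_i^4=1$ at interior critical points that reconciles the Euler--Lagrange equation of $\mathcal{H}$ with the original third equation of \eqref{Redp1007}; the paper asserts this equivalence (its equation \eqref{Redp0017}) without spelling out the normalization.
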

\begin{proof}
Let $s_{j,*}=s_{j,0}^{\frac12}$ for all $1\leq j\leq k$.  Then by \eqref{L_0}, the solvability of the third equation of the system~\eqref{Redp1007} is equivalent to the solvability of the following equation:
\begin{eqnarray}\label{Redp0017}
\left\{\aligned&0=\left|\sum_{l=1}^{m}t_{l,0}e_l(\xi_{j,0})\right|s_{j,*}-\left(\sum_{n=1}^k\left|\sum_{l=1}^{m}t_{l,0}e_{l}(\xi_{n,0})\right|s_{n,*}^3\right)s_{j,*}^2,\\
&s_{j,*}>0,\quad 1\leq j\leq k.
\endaligned\right.
\end{eqnarray}
We claim that \eqref{Redp0017} is variational by proving that it is the Euler-Lagrange equation of the function
\begin{eqnarray*}\label{Redp0025}
\mathcal{H}(\pmb{\nu})=\sum_{j=1}^{k}\left|\sum_{l=1}^{m}t_{l,0}e_l(\xi_{j,0})\right|^2\frac{\nu_{j}^2}{2}-
\frac{1}{6}\left(\sum_{j=1}^k\left|\sum_{l=1}^{m}t_{l,0}e_{l}(\xi_{j,0})\right|\nu_{j}^{3}\right)^2.
\end{eqnarray*}
Indeed, it is easy to see that if $\pmb{s}_0=(s_{1,0},s_{2,0},\cdots,s_{k,0})$ solves \eqref{Redp0017} then $\pmb{s}_0$ is a critical point of $\mathcal{H}(\pmb{\nu})$.  On the other hand, if $\pmb{s}_0=(s_{1,0},s_{2,0},\cdots,s_{k,0})$ is a critical point of $\mathcal{H}(\pmb{\nu})$ then $\pmb{s}_0$ satisfies
\begin{eqnarray}\label{Redp0019}
0=\left|\sum_{l=1}^{m}t_{l,0}e_l(\xi_{j,0})\right|s_{j,0}-\left(\sum_{n=1}^k\left|\sum_{l=1}^{m}t_{l,0}e_{l}(\xi_{n,0})\right|s_{n,0}^3\right)s_{j,0}^2
\end{eqnarray}
for all $1\leq j\leq k$.
If there exists $1\leq j\leq k$ such that $s_{j,0}<0$, then by \eqref{Redp0019}, we have $\sum_{n=1}^k\left|\sum_{l=1}^{m}t_{l,0}e_{l}(\xi_{n,0})\right|s_{n,0}^3<0$ which, together with \eqref{Redp0019} once more, implies that $s_{i,0}<0$ for all $1\leq i\leq k$.  It follows that $-\pmb{s}_0=(-s_{1,0},-s_{2,0},\cdots,-s_{k,0})$ is a solution of \eqref{Redp0017} by its oddness.  We can find nontrivial critical points of $\mathcal{H}(\pmb{\nu})$ by solving the variational problem
\begin{eqnarray}\label{Redp0020}
c=\max_{\pmb{\nu}\in(\bbr_+)^k}\mathcal{H}(\pmb{\nu}).
\end{eqnarray}
For the sake of simplicity, we denote $\left|\sum_{l=1}^{m}t_{l,0}e_l(\xi_{j,0})\right|$ by $a_j$ for all $1\leq j\leq k$.  It is easy to see that
\begin{eqnarray}\label{Redp0021}
\frac{\partial \mathcal{H}(\pmb{\nu})}{\partial \nu_j}=\left|\sum_{i=1}^{m}t_{i,0}e_i(\xi_{j,0})\right|^2\nu_j-\left(\sum_{l=1}^{k}\left|\sum_{i=1}^{m}t_{i,0}e_i(\xi_{l,0})\right|v_l^3\right)\left|\sum_{i=1}^{m}t_{i,0}e_i(\xi_{j,0})\right|v_j^2.
\end{eqnarray}
Since $\mathcal{H}(\pmb{\nu}_j)\to-\infty$ as $|\pmb{\nu}_j|\to+\infty$ for all $1\leq j\leq k$, by \eqref{Redp0020} and \eqref{Redp0021},
\begin{eqnarray*}
c=\max_{\pmb{\nu}\in(\bbr_+)^k}\mathcal{H}(\pmb{\nu})=\max_{\pmb{\nu}\in(\bbr_+)^k\backslash\partial\left((\bbr_+)^k\right)}\mathcal{H}(\pmb{\nu})
\end{eqnarray*}
is attained by some $\pmb{s}_{*,0}\in(\bbr_+)^k$, where $\{\pmb{\nu}_{1}, \pmb{\nu}_{2}, \cdots, \pmb{\nu}_{k}\}$ is the standard orthogonal system in $\bbr^k$ and $\partial\left((\bbr_+)^k\right)$ is the boundary of $(\bbr_+)^k$.
\end{proof}

We are in the position to prove Theorem~\ref{Thm0002}.

\begin{proof}[Proof of Theorems \ref{Thm0002}]
As that in the case of $N=4$, we recall that by Proposition~\ref{fixedpoint} and Lemmas~\ref{cs}, \ref{c1l} and \ref{c3l}, we can complete the proof of Theorem~\ref{Thm0002} by solving \eqref{redprob} for the parameters $\pmb{t}$, $\pmb{s}$ and $\pmb{\xi}$ as $\ve\to0^+$, which, under the assumptions $\beta_j=-\text{sgn}\left(\sum_{l=1}^{m}t_{l,0}e_{l}(\xi_{j,0})\right)$ for all $1\leq j\leq k$ and \eqref{Redp0008}, is equivalent to solving \eqref{redprobB} for the parameters $\pmb{t}$, $\pmb{s}$ and $\pmb{\xi}$ as $\ve\to0^+$.  Moreover, by \eqref{sgnlambda}, we need $\lambda<\lambda_\kappa$ for $N=5$.  We fix
\begin{eqnarray*}
\left\{\aligned
\tau&=\left(\frac{\left\|\sum_{l=1}^{m}t_{l,0}e_l\right\|_2^2}{\left\|\sum_{n=1}^{m}t_{n,0}e_n\right\|_{2^*}^{2^*}}\right)^{\frac{3}{4}}\ve^{\frac{3}{4}},\\
\mu&=\left(\frac{d_3L_0}{ d_2\left(\sum_{j=1}^k s_{j,0}^2\right)}\right)^2\left(\frac{\left\|\sum_{l=1}^{m}t_{l,0}e_l\right\|_2^2}{\left\|\sum_{n=1}^{m}t_{n,0}e_n\right\|_{2^*}^{2^*}}\right)^{\frac{3}{2}}\ve^{\frac{3}{2}},
\endaligned\right.
\end{eqnarray*}
where $\pmb{t}_0$, $\pmb{\xi}_0$ and $\pmb{s}_0$ are constructed in Lemmas~\ref{LemRedp0001}, \ref{LemRedp0002} and \ref{LemRedp0003}, respectively.  Since $\pmb{t}_0$, $\pmb{\xi}_0$ and $\pmb{s}_0$ are either a local maximum point or a local minimum point, which are stable under small perturbations, we can use the variational arguments to solve \eqref{redprobB} by firstly solving the second equation, next solving the first equation and finally solving the third equation in the same way in the proofs of Lemmas~\ref{LemRedp0001}, \ref{LemRedp0002} and \ref{LemRedp0003}.
\end{proof}

\end{document}